\numberwithin{equation}{section}
\theoremstyle{plain}
\newtheorem{thm}{Theorem}[section]
\newtheorem{lem}[thm]{Lemma}
\newtheorem{prop}[thm]{Proposition}
\newtheorem{cor}[thm]{Corollary}
\newtheorem*{thm*}{Theorem}
\newtheorem*{lem*}{Lemma}
\newtheorem*{prop*}{Proposition}
\newtheorem*{cor*}{Corollary}
\theoremstyle{definition}
\newtheorem{defn}[thm]{Definition}
\newtheorem*{defn*}{Definition}
\newtheorem{ex}[thm]{Example}
{}
\newtheorem{rem}[thm]{Remark}
\newtheorem*{rem*}{Remark}
\newtheorem*{ack}{Acknowledgements}{}
\theoremstyle{remark}
{}
{}
{}
\def\tensor{\otimes}
\def\to{\longrightarrow}
\def\ZZ{\mathbb{Z}}
\def\cF{\mathcal{F}}
\def\S{\Sigma}
\def\sfD{\mathsf{D}}
\def\sfK{\mathsf{K}}
\def\sfT{\mathsf{T}}
\def\mcD{\mathcal{D}}
\def\mcE{\mathcal{E}}
\def\mcF{\mathcal{F}}
\def\op{\mathrm{op}}
\DeclareMathOperator{\Proj}{Proj}
\DeclareMathOperator{\Hom}{Hom}
\DeclareMathOperator{\Ext}{Ext}
\DeclareMathOperator{\Modu}{Mod}
\DeclareMathOperator{\thick}{\mathrm{thick}}
\DeclareMathOperator{\Loc}{\mathrm{Loc}}
\DeclareMathOperator{\Thick}{\mathrm{thick}}
\DeclareMathOperator{\rad}{\mathrm{rad}}
\newcommand{\hash}{\#}
\newcommand{\lra}{\longrightarrow}
\newcommand{\lla}{\longleftarrow}
\newcommand{\finbuilds}{\models}
\newcommand{\finbuiltby}{=\!\!\!|}
\newcommand{\builds}{\vdash}
\newcommand{\DRfbbS}{\sfD(R \finbuiltby_{q^*} S)}
\newcommand{\DRfbbR}{\sfD(R \finbuiltby  \; R)}
\newcommand{\DRfbbk}{\sfD(R \finbuiltby \; k)}
\newcommand{\DEfbbE}{\sfD(\mcE \finbuiltby \; \mcE)}
\newcommand{\DEfbbk}{\sfD(\mcE \finbuiltby \;k)}
\newcommand{\DEfbbD}{\sfD(\mcE \finbuiltby_{i^*}\mcD)}
\newcommand{\Qh}{\hat{Q}}
\newcommand{\Rh}{\hat{R}}
\newcommand{\Sh}{\hat{S}}
\newcommand{\Qmod}{\mbox{Mod-$Q$}}
\newcommand{\Qhmod}{\mbox{Mod-$\Qh$}}
\newcommand{\Rmod}{\mbox{Mod-$R$}}
\newcommand{\AMod}{\mbox{Mod-$A$}}
\newcommand{\Rhmod}{\mbox{Mod-$\Rh$}}
\newcommand{\Smod}{\mbox{Mod-$S$}}
\newcommand{\Shmod}{\mbox{Mod-$\Sh$}}
\newcommand{\modD}{\mbox{Mod-$\mcD$}}
\newcommand{\modE}{\mbox{Mod-$\cE$}}
\newcommand{\modF}{\mbox{Mod-$\cF$}}
\newcommand{\kdq}{k^{\hash Q}}
\newcommand{\kdr}{k^{\hash R}}
\newcommand{\kds}{k^{\hash S}}
\newcommand{\kde}{k^{\hash \mcE}}
\newcommand{\qh}{\hat{q}}
\newcommand{\ph}{\hat{p}}
\newcommand{\cE}{\mathcal{E}}
\newcommand{\HomQ}{\Hom_Q}
\newcommand{\HomR}{\Hom_R}
\newcommand{\HomS}{\Hom_S}
\newcommand{\HomD}{\Hom_{\mcD}}
\newcommand{\HomE}{\Hom_{\mcE}}
\newcommand{\HomF}{\Hom_{\mcF}}
\newcommand{\Eb}{\overline{E}}
\newcommand{\Fb}{\overline{F}}
\newcommand{\Db}{\overline{D}}
\newcommand{\st}{ \; | \; }
\newcommand{\T}{\mathbb{T}}
\newcommand{\sdr}{\rtimes}
\newcommand{\Dsg}{\sfD_{\mathrm{sg}}}
\newcommand{\Dcosg}{\sfD_{\mathrm{cosg}}}
\newcommand{\Q}{  {\mathbb{Q}}  }
\newcommand{\fm}{\mathfrak{m}}
\newcommand{\Fp}{\mathbb{F}_p}
\title{Morita theory and singularity categories}
\author{J.P.C.Greenlees}
\address{J.P.C.Greenlees, Warwick Mathematics and Institute, Zeeman Building, 
Coventry, CV4 7AL. UK.}
\email{john.greenlees@warwick.ac.uk}
\author{Greg Stevenson}
\address{Greg Stevenson, School of Mathematics and Statistics,
University of Glasgow,
University Place,
Glasgow G12 8QQ
}
\email{gregory.stevenson@glasgow.ac.uk}
\thanks{The authors began the discussions that led to this paper
  during the IRTATCA Programme at the  CRM Barcelona in early 2015. We are very
  grateful to the organizers and the CRM for bringing together such a stimulating group of
  people, and providing an excellent environment. The first author is
  grateful to the Simons Foundation for support.}
\subjclass[2010]{13D09, 16E45, 20J06, 55P43, 55P62}
\begin{document}

\begin{abstract}
\noindent 
We propose an analogue of the bounded derived category for an
augmented ring spectrum, defined in terms of a notion of Noether
normalization. In many cases we show  this category is independent of the chosen
normalization. Based on this, we define the singularity and
cosingularity categories measuring the failure of regularity and
coregularity and prove  they are Koszul dual in the style of the BGG
correspondence.  Examples of interest include Koszul algebras and Ginzburg DG-algebras, 
$C^*(BG)$ for finite groups (or for compact Lie groups with orientable
adjoint representation), cochains in rational homotopy theory and 
various examples from chromatic homotopy theory.   
\end{abstract}

\maketitle

\tableofcontents

\section{Introduction}
\subsection{Aspiration}
The singularity category of a commutative Noetherian ring $R$ is the Verdier quotient 
$$\Dsg (R)=\frac{\sfD^\mathrm{b}(R)}{\sfD^\mathrm{c}(R)}$$
of the bounded derived category, which consists of complexes with finitely generated total cohomology, by the 
 bounded complexes of
finitely generated projectives. When $R$ is regular, every finitely
generated module has a finite resolution by finitely generated
projectives, so that $\Dsg (R)=0$. The converse is also true, and thus $\Dsg (R)$ measures the deviation from regularity.

One would like to have such a measure of `regularity' for rings in other contexts. The ones
we have in mind are differential graded algebras (DGAs), for instance
those coming from rational homotopy theory, and ring spectra, for
example the ring spectra $C^*(BG; k)$ coming from modular representation theory. Accordingly, 
our central motivation is to generalize the definition of singularity
category by replacing $R$ with a DGA or a ring spectrum. The fundamental difficulty is that of giving
 good notions of `finitely generated' and `bounded'. 

The test of our success is in the examples we are able to cover: these
include Koszul algebras and Ginzburg DG-algebras, 
$C^*(BG)$ for finite groups, cochains in rational homotopy theory and 
various examples from chromatic homotopy theory.   

\subsection{The bounded derived category}
Although our motivation was indeed through the singularity category,
 experience teaches us that the bounded derived category of finitely
 generated modules is more
fundamental. 

In particular,  $\sfD^\mathrm{b}(R)$  often has
better properties than $\sfD^\mathrm{c}(R)$. For instance, if $R$ is a
$k$-algebra essentially of finite type for some field $k$ then (an enhancement of)
$\sfD^\mathrm{c}(R)$ is homologically smooth over $k$  (i.e.\ the diagonal is a small $\sfD^\mathrm{c}(R)$-bimodule) if and only if $R$ is smooth. On the other hand, (an enhancement of) $\sfD^\mathrm{b}(R)$ is frequently homologically smooth even when $R$ is singular (see \cite{Lunts}*{Theorem~6.3}). In a similar vein, $\sfD^\mathrm{b}(R)$ is known to be strongly generated in many cases while $\sfD^\mathrm{c}(R)$ can only be strongly generated if $R$ is regular.

 It turns out to be very effective to use this `derived smoothness' or `regularity' even for
 singular $R$.  Homological smoothness localises: notwithstanding the
 terminology, singularity categories are generally smooth. It is
 helpful to view this smoothness  as a categorical completeness
 condition; from this point of view one obtains $\sfD^\mathrm{b}(R)$
 by closing $\sfD^\mathrm{c}(R)$ under certain homotopy colimits (cf.\ \cite{neemanTBC}*{Theorem~0.14} and the preceding discussion). Explicitly, the projective resolution of a finitely
 generated module of infinite projective dimension can be viewed as
 the colimit of its brutal truncations, all of which are bounded
 complexes of finitely generated projectives and hence small. One
 useful consequence of this completeness is an analogue of Brown representability which holds for strongly generated triangulated categories and which is exploited in Section~\ref{sec:dependence}. 

The bounded derived category also naturally arises in many contexts
such as Grothendieck duality and Koszul duality; being somewhat larger
than $\sfD^\mathrm{c}(R)$ in the singular case often makes it a less
rigid object. 

In view of the importance of the bounded derived category, the fact
that we extend its definition to wider contexts is an important
secondary benefit.

\subsection{The definition}
For the purposes of the introduction,  we imagine beginning with a ring spectrum $R$ and a
map $R\lra k$ to a field $k$. We will recall relevant background in
Section \ref{sec:rnfg}, but  readers wishing to think concretely may
consider an ordinary local ring with residue field $k$  or $R=C^*(BG)\lra k$. Numerous other
examples are provided in Section~\ref{sec:rnfg}. 
The definition is based upon a choice of ``Noether normalization'' i.e.\ a morphism $S\stackrel{q}{\lra} R$ such that both $R$ and $k$ are small over $S$. Then, inspired by commutative algebra, one defines a bounded derived category relative to this normalization
\begin{displaymath}
\sfD^{q\mathrm{-b}}(R)=\{ M\in \sfD(R) \st M\; \text{is small when restricted to}\; S\}.
\end{displaymath}
By construction this contains both $R$ and $k$, and so, being thick, contains
$\sfD^\mathrm{c}(R)$ and the objects with finite dimensional
homotopy. In particular, it allows us to define the singularity category
as $\sfD_{q-\mathrm{sg}}(R)=\sfD^{q\mathrm{-b}}(R)/\sfD^c(R)$
and the cosingularity category
as $\sfD_{q-\mathrm{cosg}}(R)=\sfD^{q\mathrm{-b}}(R)/\Thick_R(k)$, which measures how far $R$ is from having finite dimensional homotopy.
%propose putting some stuff on how define D^b here, then lead with line about challenge, then BGG example and say use as template and give general result. Then quick sketch of contents and where stuff is.

\subsection{Proving the definition}
In principle we can justify the definition by showing it is useful,
but we will in fact show that this notion of finite generation is 
 intrinsic in the sense that  it does not depend on
the choice of normalization.  Our most effective result is Corollary
\ref{cor:Gorfg}: if $R$ is complete then any two relatively Gorenstein
normalizations define the same notion of finite generation and give
the same bounded derived category. 

This is very striking for  $R=C^*(BG)$. It states  all normalizations
of $C^*(BG)$ by a ring of the same type
give the same notion: a module is finitely generated if and only if
its {\em cohomology} is finitely generated over $H^*(BG)$. In particular, 
if $G$ is a $p$-group any $C^*(BG)$-module with finitely generated cohomology 
is small (see Corollary~\ref{cor:c-small} and Example~\ref{ex:BG-c-small}). 

In Section~\ref{sec:dependence} we give an approach using
representation theoretic methods: the highlights are   
Proposition~\ref{prop:cfgisf} and Corollary \ref{cor:sgfgislfp}. The former gives  
a direct interpretation of $\sfD^{q\mathrm{-b}}(R)$ in terms of finite  
generation of homotopy groups when the homotopy of $S$ is itself  
regular. The latter relates $\sfD^{q\mathrm{-b}}(R)$ to another intrinsically  
defined finiteness condition, phrased in terms of presheaves on  
$\sfD^\mathrm{c}(R)$, which characterises finite generation with respect to  
smooth normalizations with coherent homotopy.  
 
\subsection{Koszul duality and the BGG correspondence}
 The basis  of our attempts to understand $\sfD^{q\mathrm{-b}}(R)$ and
its singularity  and cosingularity quotients is the theory of Koszul duality.

The classic in this genre is the BGG
correspondence which relates the singularity category of the standard graded exterior
algebra $\Lambda (\tau_0, \ldots , \tau_n)$ to a well known invariant of its Koszul dual polynomial ring
 $k[x_0, \ldots , x_n]$: 
$$\Dsg (\Lambda (\tau_0, \ldots , \tau_n))=
\frac
{\sfD^\mathrm{b}(\Lambda (\tau_0, \ldots , \tau_n))}
{\sfD^\mathrm{c}(\Lambda (\tau_0, \ldots , \tau_n))}
\simeq 
\frac
{\sfD^\mathrm{b}( k[x_0, \ldots , x_n])}
{\sfD^\mathrm{b}_{tors}  (k[x_0, \ldots , x_n])} 
\simeq 
\sfD^\mathrm{b}(\mathbb{P}^n_k),  $$
where $\sfD^\mathrm{b}_{tors}  (k[x_0, \ldots , x_n])$ consists of complexes whose
homology is finite dimensional as a vector space. 

We prove an analogue for sufficiently well-behaved normalizations $S\stackrel{q}{\lra} R$. In fact, the above story is a consequence of an equivalence at the level of bounded derived categories
\begin{displaymath}
\sfD^\mathrm{b}(\Lambda (\tau_0, \ldots , \tau_n)) \simeq \sfD^\mathrm{b}( k[x_0, \ldots , x_n]),
\end{displaymath}
which interchanges the  bounded complexes of finitely generated
projectives and the complexes with finite dimensional cohomology. We
give a substantial generalization of 
this equivalence. In Section~\ref{sec:standardcontext} we introduce
the Koszul dual of the cofibre sequence arising from a
normalization. Under favourable circumstances, given a normalization
$S\lra R$ with cofibre $Q=R\tensor_Sk$,  one may take derived endomorphisms of
$k$, to obtain a dual cofibre sequence
\begin{displaymath}
\cF = \Hom_S(k,k) \lla \cE = \Hom_R(k,k) \lla \mcD = \Hom_Q(k,k)
\end{displaymath}
where the morphism $\mcD \lra \cE$ is a normalization in the same
sense. A number of nice properties that these cofibre sequences may
 have are formalized in Section~\ref{sec:SGC} by the notion of a
 Symmetric Gorenstein Context. Roughly, it says that all of the six
 rings and four morphisms occuring in the two cofibre sequences are 
Gorenstein and both sequences arise from taking the cofibre of a
normalization.  We show that under completeness hypotheses all these
good properties follow from the requirements on the original
normalization  $S\lra R$.

Our main theorem is as follows. 
\begin{thm*}[\ref{thm_realmain}, \ref{thm:main}]
Suppose $S\stackrel{q}{\lra}R$ is such that $R$ and $S$ are complete, both $R$ and $k$ are small over $S$, and we have
\begin{displaymath}
\Hom_S(R,S) \simeq \Sigma^{a_q}R \quad \text{and} \quad \Hom_S(k,S) \simeq \Sigma^{a_S}k
\end{displaymath}
for some $a_q, a_S\in \ZZ$. Then
\begin{displaymath}
\cE = \Hom_R(k,k) \stackrel{i}{\lla} \mcD = \Hom_Q(k,k)
\end{displaymath}
is a normalization and if in addition $\cE$ satisfies $\Hom_\cE(k,\cE)
\simeq \Sigma^{a_\cE}k$ for some integer $a_\cE$ (as is automatic if $R$ is an augmented 
$k$-algebra), there is an equivalence
\begin{displaymath}
\sfD^{q\mathrm{-b}}(R) \simeq \sfD^{i\mathrm{-b}}(\cE)
\end{displaymath}
interchanging the small objects with $\thick(k)$. In particular, there are equivalences
\begin{displaymath}
\sfD_{q-\mathrm{sg}}(R) = \frac{\sfD^{q\mathrm{-b}}(R)}{\sfD^\mathrm{c}(R)} \simeq \frac{\sfD^{i\mathrm{-b}}(\cE)}{\thick_{\sfD(\cE)}(k)}
= \sfD_{i-\mathrm{cosg}}(\mcE)
\end{displaymath}
and
\begin{displaymath}
\sfD_{q-\mathrm{cosg}}(R) = \frac{\sfD^{q\mathrm{-b}}(R)}{\thick_{\sfD(R)}(k)}\simeq \frac{\sfD^{i\mathrm{-b}}(\cE)}{\sfD^\mathrm{c}(\cE)}
= \sfD_{i-\mathrm{sg}}(\mcE).
\end{displaymath}
\end{thm*}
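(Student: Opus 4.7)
The plan is to set up a Koszul duality functor between $\sfD(R)$ and $\sfD(\cE)$ and then restrict it to the relevant finiteness subcategories, with the Symmetric Gorenstein Context from Section~\ref{sec:SGC} doing most of the heavy lifting. First I would verify that $\mcD\lra \cE$ is a normalization, i.e.\ that both $\cE$ and $k$ are small over $\mcD$. Smallness of $k$ over $\mcD = \Hom_Q(k,k)$ should follow because $k$ is small over $Q = R\tensor_S k$ (since $k$ is small over both $R$ and $S$ by hypothesis, so $k\tensor_S k$ and hence $k$ is small over $Q$). Smallness of $\cE$ over $\mcD$ comes from the dual cofibre sequence $\mcD\lra \cE\lra \cF$ together with smallness of $\cF = \Hom_S(k,k)$ over $\mcD$, which in turn is a dualization of the smallness of $k$ over $S$ under the Gorenstein hypothesis $\Hom_S(k,S)\simeq \Sigma^{a_S}k$. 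This is precisely where the Symmetric Gorenstein Context data from the hypotheses is used, and I would invoke the results of Section~\ref{sec:SGC} formally.

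Next I would construct the Koszul duality functor
\begin{displaymath}
F = \RHom_R(k,-)\colon \sfD(R) \lra \sfD(\cE),
\end{displaymath}
where $\cE$ acts on the target via the standard action on $k$. The completeness hypothesis on $R$ is exactly what is needed to make the adjoint $-\tensor^{\mathbf{L}}_\cE k$ give an inverse equivalence on appropriate subcategories, via the unit $R\lra \Hom_\cE(k,k)$ being an equivalence. I would check $F$ on the two key objects: $F(k) = \cE$, and using $\Hom_R(k,R)\simeq \Sigma^{a_R}k$ (which should follow from the Gorenstein conditions $\Hom_S(R,S)\simeq \Sigma^{a_q}R$ and $\Hom_S(k,S)\simeq \Sigma^{a_S}k$ by a change of rings spectral sequence / duality composition), we get $F(R)\simeq \Sigma^{a_R}k$. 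Thus $F$ sends small objects of $\sfD(R)$ into $\thick_{\sfD(\cE)}(k)$ and sends $k$ into the small objects of $\sfD(\cE)$.

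The main equivalence $\sfD^{q\text{-b}}(R)\simeq \sfD^{i\text{-b}}(\cE)$ then falls out by checking that the defining smallness conditions are preserved in both directions. An object $M\in \sfD(R)$ lies in $\sfD^{q\text{-b}}(R)$ iff its restriction to $S$ is small; under $F$ this should translate into the restriction of $F(M)$ to $\mcD$ being small, using the Koszul duality at the level of $S$-vs-$\mcD$ (the dual pair arising from the cofibre sequences) and the additional Gorenstein condition $\Hom_\cE(k,\cE)\simeq \Sigma^{a_\cE}k$ needed to make the duality symmetric. Because $F$ swaps $R\leftrightarrow \Sigma^{a_R}k$ and $k\leftrightarrow \cE$, it swaps $\sfD^c(R)=\thick_{\sfD(R)}(R)$ with $\thick_{\sfD(\cE)}(k)$, and the singularity/cosingularity statements are immediate from passing to Verdier quotients.

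The main obstacle I would expect is step two: establishing that $F$ restricts to an equivalence on the intrinsically-defined subcategories $\sfD^{q\text{-b}}(R)$ and $\sfD^{i\text{-b}}(\cE)$, rather than just on the generated subcategories $\thick(R,k)$. This requires leveraging completeness together with the strong generation / Brown representability flagged in the introduction to extend the equivalence from thick hulls to the full bounded derived category, and it is exactly here that the Gorenstein hypothesis on $\cE$ enters to ensure the duality is genuinely symmetric rather than one-sided. The verification that restriction-smallness over $S$ corresponds to restriction-smallness over $\mcD$ under $F$ is the technical heart, and will use the Gorenstein identifications $\Hom_S(R,S)\simeq \Sigma^{a_q}R$ and $\Hom_S(k,S)\simeq \Sigma^{a_S}k$ to convert between $S$-smallness of $M$ and $\mcD$-smallness of $F(M)$.
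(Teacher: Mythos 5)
Your overall architecture coincides with the paper's: the functor is $E=\Hom_R(k,-)$, one checks $E(R)\simeq\Sigma^{a_R}k$ and $E(k)\simeq\cE$, restricts to the bounded categories, and passes to Verdier quotients. But the step you yourself identify as the technical heart --- that $q^{*}M$ small over $S$ forces $i^{*}EM$ small over $\mcD$, and conversely --- is asserted rather than proved, and this is where essentially all of the content lives. The paper's mechanism is a two-step argument: first $S\finbuilds q^{*}M$ implies $k\finbuilds p_{*}M$ by base change along $S\to k$ (Lemma~\ref{lem_finiteness1}); then the commutation relation $Dp_{*}M\simeq \Sigma^{a_{\mcD}-a_{\cE}}i^{*}EM$ converts this into $\mcD = Dk\finbuilds i^{*}EM$ (Lemma~\ref{lem_finiteness2}). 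Establishing that relation and its siblings is precisely where the relative Gorenstein hypotheses on $q$, $j$, $i$ are consumed, and it occupies Sections~\ref{sec:SGC}--\ref{sec:cr}; saying the translation ``should'' follow from Koszul duality between $S$ and $\mcD$ leaves exactly this out. (A smaller point: smallness of $k$ over $\mcD$ comes from $k\finbuilds Q$ as $Q$-modules, obtained from $S\finbuilds R$ by applying $-\tensor_S k$ and then dualized by $\Hom_Q(-,k)$; it is not a consequence of ``$k$ being small over $Q$''.)

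The second genuine problem is your choice of inverse. The functor $-\tensor_{\cE}k$ is the \emph{left} adjoint of $E$, and the composite $\Hom_R(k,M)\tensor_{\cE}k\to M$ is $k$-cellularization, not completion; it fails to be an equivalence on $\sfD^{q\mathrm{-b}}(R)$ already for $M=R$ unless $R$ is coregular. The inverse that works is $\Eb=\Hom_{\cE}(\kdr,-)$, for which $\Eb E=\Lambda$ is the completion functor. Completeness of $R$ and $S$, together with the facts that completion is trivial on $S$-small modules and that $q^{*}$ reflects equivalences, gives $\Eb E\simeq \mathrm{id}$ on $\sfD^{q\mathrm{-b}}(R)$, and the dual argument with $i^{*}$ and $D\Db$ handles $E\Eb$. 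In particular no strong generation or Brown representability is needed or available here: those tools enter the paper only in Section~\ref{sec:dependence}, for an independent invariance statement, and strong generation of the categories in question is never established.
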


\subsection{Examples}
In Section~\ref{sec:egs2} we conclude by giving a number of concrete examples to illustrate the theorems. To give just a hint
of these: they  range from
standard examples of Koszul duality in algebra (Examples
\ref{eg:DsgKoszul}, \ref{eg:DsgGinzburg}) giving a new point of view
on some known equivalences,  through rational
homotopy theory (Example \ref{ex:rat11}):
$$\sfD_\mathrm{sg}(C^*(X))\simeq \sfD_\mathrm{cosg}(C_*(\Omega X)), $$
 to  ring spectra arising from modular representation
theory (Examples \ref{ex:stmodkG} to \ref{eg:DsgA4}) and chromatic
homotopy theory (Example \ref{eg:Dsgchrom}). 
Two notable counterparts of the BGG correspondence above are the
equivalence (Example \ref{ex:stmodkG}):
$$\sfD_\mathrm{cosg}(C^*(BG))\simeq \mathrm{stmod}(kG)$$
for $p$-groups $G$ relating modules over $C^*(BG)$ to the stable
module category, and some counterparts in chromatic homotopy theory 
(Example \ref{eg:Dsgchrom}), which we illustrate here with  connective
real $K$-theory and its connection with the subalgebra $\mathcal{A}(1)$ of the
Steenrod algebra: 
$$\sfD_\mathrm{cosg}(ko)\simeq \mathrm{stmod}([\mathcal{A}(1)]),  $$
where $[\mathcal{A}(1)]$ is a ring spectrum with homotopy
$\mathcal{A}(1)$.  

We recommend the reader glances through Section \ref{sec:egs2} to
understand why we make an effort to keep the context very general. 

\subsection{Contents}
We begin in Section  \ref{sec:sundries} by introducing some standard
notation and terminology. 

In Section \ref{sec:rnfg} we give our main definitions: the notion of normalization and
the resulting definition of `finitely generated', and the bounded
derived category. We introduce several examples and describe briefly how this applies. We also comment on the generality in which normalizations exist.

In Section~\ref{sec:dependence} we give a first study of the dependence of $\sfD^{q\mathrm{-b}}(R)$ on 
the choice of normalization $S\stackrel{q}{\lra}R$, using techniques
from representation theory. 

In Section \ref{sec:standardcontext} we describe how a normalization
gives rise to the Six Ring Context consisting of two Koszul dual
cofibre sequences. In Section \ref{sec:SGC} we restrict attention to 
Symmetric Gorenstein Contexts where  all the rings and maps are
Gorenstein and the two cofibre sequences are dual. We show that in the 
complete context,  the conditions on the original normalization alone
are often sufficient to ensure we have the full Symmetric Gorenstein 
Context. We show that this often happens in our examples. 

In Section \ref{sec:completion} we recall the appropriate derived
notions of completion, and show that in the complete case all
Gorenstein normalizations give the same notion of finite generation
and the same bounded derived category. 

In Section \ref{sec:cr} we show that in 
the Standard Gorenstein Context,  the Morita equivalences, change
of rings and completions are well related, giving eight valuable commutation
relations: four direct and four with dimension shifts. Finally,
having established the formal framework,   it is straightforward to 
prove our main theorem in  Section \ref{sec:duality}. We illustrate the result in
our examples in Section \ref{sec:egs2}.

We finish, in Section~\ref{sec:glossary}, with a glossary; our constructions involve making a number of definitions and the terminology is collected there, together with references to where it appears in the article.

\begin{ack}
We thank A.J.Baker for simplifying our argument in Example
\ref{eg:Dsgchrom}. We are indebted  to the referee for their very careful reading of this
manuscript; their comments greatly improved the end result. 
\end{ack}

\section{Sundries}\label{sec:sundries}

In this section we fix various notation and conventions that will be used throughout the sequel. In particular, due to the range of examples we treat there are, somewhat inevitably, challenges involving the terminology which we address before continuing. An extensive list of terminology can be found in Section~\ref{sec:glossary}.

We will use the term `ring' to mean structured ring spectrum and note
that this encompasses the theory of DG-algebras (see \cite{ShipleyHZ}
for details). Along these lines, given a DG-algebra $A$, for instance
a (classical, discrete) ring, we will tacitly identify $A$ with its Eilenberg-Mac Lane spectrum $HA$. By \cite{SchwedeShipleymodule}*{Theorem~5.1.6} we have $\sfD(A)\simeq \sfD(HA)$ so this does no harm. To illustrate this, let us mention that throughout we will generally work over a field $k$ by which we really mean its Eilenberg-Mac Lane spectrum $Hk$.

Given a spectrum $X$ we will denote its homotopy groups $\pi_*X$ by
$X_*$. For instance, the coefficient ring of a ring spectrum $R$ will
be denoted $R_*$. If the ring $R$ were $HA$ for some DG-algebra $A$
this would be the same as $H_*(A)$, the homology of $A$. We will
choose between homological and homotopical language depending on
the context; many of our examples will be rings of the form
$C^*(X;k)$, for some space $X$, and accordingly  $\pi_*C^*(X;k)$ is
the cohomology of $X$, i.e.\ $H^*(X;k)$ (with upper and lower gradings
related by  $M^k=M_{-k}$ as usual). 

Now let us fix a ring $R$ and introduce some of the associated
notation. By $\Rmod$ we mean the model category (or stable
$\infty$-category) of $R$-module spectra with weak equivalences the
 maps inducing weak equivalences of the underlying spectra. The homotopy category of $\Rmod$ is
$\sfD(R)$ the derived category of $R$. Given an object $X$ of
$\sfD(R)$ we denote by $\Thick(X)$, or $\Thick_R(X)$ if the ring needs
to be emphasised, the smallest full replete subcategory of $\sfD(R)$
containing $X$ and closed under suspensions, mapping cones, and
retracts and call it the thick subcategory generated by $X$. We denote
by $\Loc(X)$ the localizing subcategory generated by $X$ which is the
smallest full replete subcategory containing $X$ and closed under arbitrary
coproducts, suspensions, and mapping cones.

If $Y\in \Thick(X)$ we will say $X$ {\em finitely builds} $Y$ and
write $X\finbuilds Y$, and if $Y\in \Loc(X)$ we say $X$ {\em builds}
$Y$ and write $X\builds Y$. The thick subcategory of small (more
precisely,  $\aleph_0$-small) objects of $\sfD(R)$ is
\begin{displaymath}
\sfD^\mathrm{c}(R) := \{X\in \sfD(R) \; \vert \; R\finbuilds X\}
\end{displaymath}
and can also be characterised as consisting of those objects such that the corresponding corepresentable functor preserves arbitrary coproducts. It is necessary at this point to say something about the terminology: there are many synonyms for small. In algebraic settings it is customary to call objects of $\sfD^\mathrm{c}(R)$ perfect and in abstract settings to call them compact. The latter is reflected in the notation, which is by this point quite standard so we stick with it. However, we will consistently use the descriptors small or finitely built by $R$ rather than perfect or compact. We will also be concerned with a number of other subcategories of $\sfD(R)$ which are defined throughout the article.

All functors throughout are derived and so we do not indicate this in
the notation. For instance, given $R$-module spectra $X$ and $Y$ we
denote by $\Hom_R(X,Y)$ the (derived) mapping spectrum. In a similar vein all tensor products are derived, by cofibre we mean homotopy cofibre (in the ambient category\textemdash{}for instance, here it refers to the homotopy cofibre of a map of module spectra), and so on.

Given a map of rings $S\stackrel{q}{\lra}R$ we denote base change and
restriction by $q_*$ and $q^*$ respectively. To be completely clear,
since we cover many contexts our notation reflects the variance of
the functors and not that of the functors on  the associated geometric
objects: throughout we have
\begin{displaymath}
q_* = R\otimes_S - \quad \text{and} \quad q^* = \Hom_R({}_SR,-),
\end{displaymath}
where, as noted above, everything is tacitly derived.

\section{Regularity, normalization and finite generation}
\label{sec:rnfg}
We are working in the context of homotopy invariant commutative-inspired
algebra. We collect here some of the basic definitions, and provide
pointers to the literature. We then introduce the concept of a normalization which is at the heart of all that follows. Throughout $R$ is some ring spectrum; several examples of a suitable choice of $R$ will be provided throughout.

\subsection{Regularity}\label{sec:regularity}
We say that  $R\lra k$ is {\em g-regular} if $k$ is small as an
$R$-module, i.e.\ $R$ finitely builds $k$. By the Auslander-Buchsbaum-Serre theorem a commutative Noetherian local ring with residue
field $k$ is g-regular if and only if it is regular. 
%Accordingly we drop the letter g from now on. 
We will say that $S\lra R$ is {\em
  relatively g-regular} if $R$ is small as an $S$-module. 

%Dually we say that $R\lra k$ is {\em coregular} if $k$ finitely builds
%$R$, and $S\lra R$ is {\em relatively coregular} if $R$ finitely
%builds $S$.

\subsection{Proxy-regularity}
Since g-regularity is an extremely strong condition we use the following
much weaker condition as a basic finiteness condition. 

\begin{defn}\cite{DGI2}
\label{defn:proxysmall}
We say that $k$ is {\em proxy-small} if there is an object 
$K$ with the following properties
\begin{itemize}
\item $K$ is small ($R\finbuilds K$),
\item  $K$ is finitely built from $k$ ($k\finbuilds K$) and
\item  $k$ is built from $K$ ($K\builds k$). 
\end{itemize}
In this case we say that $R$ is \emph{proxy-regular}.
\end{defn}

One of the main messages of \cite{DGI2} is that we might use the
condition that $k$ is proxy-small as a substitute for the Noetherian
condition in the conventional setting. This rather weak condition 
allows one to develop a very useful theory applicable in a large range
of examples. 

We can illustrate this by looking at the proxy-small condition in the classical case. 

\begin{ex}   {\em (Algebra)}
When $R$ is a commutative Noetherian
local ring with residue field $k$, the Auslander-Buchsbaum-Serre theorem states that $k$ is  small if and only if $R$ is a regular 
local ring. This confirms that  the smallness of $k$ is a very 
strong condition. On the other hand, $k$ is  always proxy-small:  
we may take $K$ to be the Koszul complex for a generating 
sequence for the maximal ideal.
\end{ex}

We now consider the situation in a number of more
complicated contexts; we take this as an opportunity to set up conventions and notation for examples that we will refer to throughout, which give life and form to the abstraction that follows.

\begin{ex}\label{ex:dt}   {\em (Rational homotopy theory)}
We may take $R$ to be a commutative DGA over the rationals. For
example, if we insist $R$ is coconnective and simply coconnected, the
category of these is equivalent to that of rational spaces \cite{Quillen}. 
We therefore take $R=C^*(X;\Q)$ and $k=\Q$.

We see that $R$ is regular if and only if $X$ is a finite product of even
Eilenberg-MacLane spaces $K(\Q, 2n)$. Indeed, since $X$ is 1-connected the Eilenberg-Moore theorem states
$$\cE =\Hom_{C^*(X)}(k, k)\simeq C_*(\Omega X; \Q). $$
We then note that $\Omega X\simeq
\prod_nK(\pi_nX, n-1)$, which has finite homology if and only if the
product is finite and the Eilenberg-MacLane spaces  are all in odd degree.

On the other hand, $\Q$ is proxy-small whenever $H^*(X)$ is
Noetherian. Taking a usual Noether normalization we see $H^*(X)$ is finite as a module over a polynomial
subring. We may then realize this polynomial subring by a map $X\lra
\prod_iK(\Q, 2n_i)$, with fibre $F$, and we will denote by $S$ the ring $C^*(\prod_iK(\Q, 2n_i);\Q)$. We may take $K=C^*(F; \Q)$ as a proxy for $\Q$; this
builds $\Q$ since $K$ is a ring, $R\simeq R\tensor_SS \finbuilds R\tensor_S\Q\simeq C^*(F;\Q)=K$, and 
$\Q\finbuilds C^*(F;\Q)$ because $H^*(X; \Q)$ is finite over the
polynomial subring. 
\end{ex}

\begin{comment}
\begin{ex}   {\em ($p$-complete spaces)}
Returning to Example \ref{ex:t}, with $R=C^*(X;\Fp)$, we see that $R$ is
regular if and only if $H_*(\Omega X; \Fp)$ is finite
dimensional (cf.\ Dwyer-Wilkerson $p$-compact
groups \cite{DwyerWilkerson}). We restrict comments on proxy smallness to the
representation theoretic case.  
\end{ex}
\end{comment}

\begin{ex}\label{ex:g}   {\em (Representation theory)} 
We could consider a compact Lie group $G$, set $k=\Fp$, and take $R=C^*(BG;\Fp)$. This example satisfies the hypotheses of the Eilenberg-Moore theorem so that
$$\cE =\Hom_{C^*(BG)}(k, k)\simeq C_*(\Omega(BG_p^{\wedge}); \Fp), $$
where $BG_p^{\wedge}$ denotes the Bousfield-Kan $p$-completion of
$BG$.

If $G$ is a finite  $p$-group, $BG$ is already $p$-complete, so that
$\Omega (BG_p^{\wedge})\simeq G$, and again if $G$ is  connected
$\Omega (BG_p^{\wedge})\simeq G_p^{\wedge}$,  but in general $\Omega
(BG_p^{\wedge})$ will be infinite dimensional.
 
In this case $R$ is regular if and only if $H_*(\Omega
(BG_p^{\wedge}); \Fp)$ is finite dimensional
(i.e., $\Omega (BG_p^{\wedge})$ is a $p$-compact group in the sense of
Dwyer-Wilkerson).  We have already observed that this happens if $G$ is a finite
$p$-group or a connected compact Lie group. 

It is shown in \cite[Subsection 5.7]{DGI2} that $C^*(BG)$ is proxy-regular (i.e.\ $k$
is proxy-small) for all compact Lie groups $G$. 
\end{ex}

\subsection{Normalization and finitely generated
  modules}\label{sec:Db}
We need a well behaved notion of finite generation for $R$-modules
$M$.  The most naive notion is finite generation of the coefficients:

\begin{defn}\label{defn:c-fg}
We say an $R$-module $M$ is {\em coefficient-finitely generated} if the module $M_*$
of homotopy groups is finitely generated over the coefficient ring
$R_*$. There is a naturally corresponding subcategory
$$\sfD^f(R)=\{ M\st M_* \mbox{ is finitely generated over } R_*\}.$$
\end{defn}

\begin{rem}
In other work by the first author this notion is called $c$-finite generation and the corresponding notion of regularity is called $c$-regularity (see Definition~\ref{defn:creg}). Due to the visual conflict with terminology we will introduce, namely the notion of finite generation with respect to a normalization, we expand the $c$ to coefficient throughout.
\end{rem}

 It is not clear that this class of objects  has good formal  properties unless the
coefficient ring $R_*$ is very nice. Nonetheless we will introduce a
better behaved  notion which appears to depend on additional data and
some of our main results will show that  in many cases that it agrees with the naive notion. 

The following concept is central to our analysis.

\begin{defn}\label{defn:g-normalization}
A {\em g-normalization} of $R\lra k$ is a map $q\colon S\lra R$ so that
$R$ and $k$ are small as $S$-modules, i.e.\ $S$ is g-regular and $q$ is relatively g-regular. 

Since there is no real possibility for confusion we will systematically omit the `g-' for brevity and refer to $q$ simply as a normalization.
\end{defn}

%We will henceforth omit the g-for brevity. 
This plays the role of Noether normalization in commutative algebra, and gives us a method for
defining an analogue of the bounded derived category. 

\begin{defn}\label{defn:q-fg}
Given a normalization $q$ as above, an $R$-module $M$ is said to be {\em $q$-finitely generated} if $q^* 
M$ is small over $S$. We define a corresponding thick subcategory
$$ \sfD^{q\mathrm{-b}}(R):=\DRfbbS :=\{ M\in \sfD(R) \st q^*M\finbuiltby S\}.$$
\end{defn}

If $R$ and $S$ are conventional Noetherian rings,  then an  $R$-module
is $q$-finitely generated if and only if its homology is finitely
generated in the conventional sense.  Accordingly the category $ \sfD^{q\mathrm{-b}}(R)$ is the analogue of the bounded derived category.

We will discuss the extent to which this depends on $q$ in Section~\ref{sec:dependence} and then again in Section~\ref{subsec:Gorfg}. 

\begin{rem}
There is an obvious small conflict in terminology between $g$-regular (and its relatives) and $q$-finitely generated. Our approach to this is to reserve the letter $g$ and only use it in the context of $g$-regularity and so on. 
\end{rem}

For now let us indicate what such normalizations look like in our examples.

\begin{ex}\label{ex:a2}   {\em (Algebra)}
Let $(R,\mathfrak{m},k)$ be a commutative Noetherian complete local $k$-algebra. By \cite{Cohen46}*{Theorem~16} we can find a subring $S$ of $R$, which is a power series ring, and over which $R$ is finite. This gives a normalization of $R$ and the above definition gives the usual bounded derived category of finitely generated modules.
\end{ex}

\begin{ex}   {\em (Rational homotopy theory)}
Returning to Example \ref{ex:dt}, with $R=C^*(X;\Q)$, 
 whenever $H^*(X)$ is finitely generated,  it is finite as a module over a polynomial
subring. We may then realize this polynomial subring by a map $X\lra
\prod_iK(\Q, 2n_i)$ which gives a normalization
\begin{displaymath}
q: S=C^*(\prod_iK(\Q, 2n_i))\lra R.
\end{displaymath}
We will see in Lemma
\ref{lem:creg} that this implies that an $R$-module $M$  is $q$-finitely generated if and only 
if it is coefficient-finitely generated (i.e.\  $H_*(M)$ is finitely generated over $H^*(X)$).
\end{ex}

\begin{ex}\label{ex:BGnormalization}   {\em (Representation theory)}
Returning to Example \ref{ex:g}, with $R=C^*(BG;\Fp)$, we may choose a
faithful representation $G\lra U(n)$. Then the map $S=C^*(BU(n))\lra
C^*(BG)=R$ is a normalization. Indeed,  $H^*(BU(n))$ is polynomial and 
by Venkov's theorem $H^*(BG)$ is finitely generated as a module
over it. Thus the cohomology of $BG$ has a finite projective
resolution over the cohomology of $U(n)$ and so by  Lemma \ref{lem:creg}
$C^*(BG)$ is finitely built from $C^*(BU(n))$, and  a $C^*(BG)$-module
$M$  is $q$-finitely generated if and only 
if it is coefficient-finitely generated (i.e.\  if and only if $H_*(M)$ is finitely generated over $H^*(BG)$).
\end{ex}

\subsection{Existence of normalizations}

We have just seen that in examples coming from rational homotopy
theory and representation theory that, not only do normalizations
exist, one can find normalizations of the same flavour, i.e.\ which
occur very naturally through some construction in that area. In this
subsection we work with classical associative DGAs, and  indicate how
one can construct normalizations in that context  rather generally. 
%Our result covers the aforementioned examples, but one does not produce a `natural' normalization.

We recall that a graded $k$-algebra $A$ is \emph{finitely presented} if it is a quotient of a finitely generated graded free algebra by a finitely generated homogeneous ideal, i.e.\ $A$ has a presentation of the form
\begin{displaymath}
A \simeq k\langle x_1,\ldots, x_n\rangle/I
\end{displaymath}
where $k\langle x_1,\ldots, x_n\rangle$ denotes the free algebra on the $n$ generators $x_1,\ldots, x_n$ and $I$ is a finitely generated ideal. 

\begin{thm}
Let $R\lra k$ be an augmented DG-algebra over $k$ such that $H^*(R)$ is a finitely presented $k$-algebra. Choose a presentation 
\begin{displaymath}
S= k\langle x_1,\ldots, x_n\rangle \stackrel{\pi}{\lra} H^*(R).
\end{displaymath}
Then $\pi$ can be lifted to a normalization $q\colon S \lra R$, where $S$ is viewed as a DG-algebra with trivial differential.
\end{thm}
\begin{proof}
Let $\pi\colon S\lra H^*(R)$ be as in the statement. We can lift $\pi$ to a map of DG-algebras $q\colon S\to R$ by choosing cocycles $z_i$ representing the generators $\pi(x_i)$ of $H^*(R)$. Indeed, the universal property of the free algebra gives a ring map $S\lra R$ sending $x_i$ to $z_i$, and since the $z_i$ are cocycles and $S$ has trivial differential this is a map of DG-algebras.

It remains to show that $q$ is a normalization, i.e.\ that $R$ and $k$ are small over $S$. By definition $H^*(R)$ is a finitely presented graded $S$-module and clearly so is $k$. It is thus enough to note that any DG-$S$-module with finitely presented cohomology is small; this follows from the fact that free algebras have global dimension $1$ (and in particular are coherent). One can prove this as in Proposition~\ref{prop:gcriterion}, which implies the statement for $k$ and whose proof generalizes to cover $R$. Alternatively, it is clear for finitely presented modules, since every ideal of a free algebra is a free module, and the usual argument shows that every DG-$S$-module is formal.
\end{proof}

%JG: This remark is correct but distracting. The point is that an
%associative algebra normalization does you no good if you are in a
%commutative algebra context. 
%\begin{rem}
%When $R$ is defined over a field the condition that $H^*(R)$ is finitely presented is certainly not necessary. %For instance any commutative regular local $k$-algebra is its own normalization, but these are typically not% finitely presented over $k$.
%\end{rem}

We are not aware of any restrictions imposed by the existence of a normalization in general, although presumably they are not for free. For instance, if $k$ is not finitely presented over $R_*$ then it seems very optimistic to expect a normalization to exist. At the very least this is an obstruction to the existence of normalizations with coherent homotopy (the relevant terminology is defined in Section~\ref{sec:dependence}).

\begin{prop}
Suppose that the augmentation $R\lra k$ is surjective on homotopy and $q\colon S\lra R$ is a normalization such that $S_*$ is a coherent ring. Then $k$ is finitely presented over $R_*$.
\end{prop}
\begin{proof}
The first part of the proof of Proposition~\ref{prop:Rcoherent} shows that, in this situation, $R_*$ is coherent, and the statement of said proposition shows that $k$ is cohomologically locally finitely presented over $R$. With this as input Lemma~\ref{lem_k_fp} tells us that $k$ is finitely presented over $R_*$.
\end{proof}

\section{Local finite presentation and dependence on normalization}
\label{sec:dependence}

We give a first discussion of  how the notions of finite generation and the bounded
derived category depend on the choice of normalization. We show that
in two situations they are independent of this choice. The first, Proposition~\ref{prop:cfgisf}, assumes the coefficient ring $S_*$ is regular, and the second, Corollary \ref{cor:sgfgislfp}, that it is coherent with a well
behaved derived category. The arguments proceed via the homological
algebra of cohomological functors.

These are useful criteria, but not sufficient to treat a general $g$-regular
ring. We will return to this question in Subsection
\ref{subsec:Gorfg}; we show that in our principal 
applications (where we have completeness and Gorenstein conditions)
 finite generation is independent of
the normalization. That argument is independent of those given here, so 
some readers may wish to skip this section.

\subsection{Modules over coefficient-regular rings}
\label{subsec:cregsmall}
We begin with a definition.

\begin{defn}\label{defn:creg}
We say that $S$ is \emph{coefficient-regular} if the coefficient
ring $S_*$ is a Noetherian regular ring. 
\end{defn}

This is a rather strong condition and implies $g$-regularity provided $k$ is finitely generated over $S_*$. 
In fact, if $S$ is coefficient-regular and $N$ is an $S$-module then
$N$ is small if and only if $N$ is coefficient-finitely generated, i.e.\ $N_*$ is finitely generated over
$S_*$ (see \cite{JohnNotes}*{Lemma~10.2} for a proof).

\subsection{Coefficient-regular normalizations}\label{ssec:crn}

If the normalization $S\stackrel{q}\lra R$ has the property that $S$ is
coefficient-regular then it is easy to understand when an $R$-module is
finitely generated. In this case we will call $q$ a \emph{coefficient-normalization}.

\begin{lem}\label{lem:creg}
If $S\stackrel{q}\lra R$ is a coefficient-normalization then an $R$-module $M$ is $q$-finitely
generated if and only if it is coefficient-finitely generated.
\end{lem}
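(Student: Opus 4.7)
The plan is to reduce the lemma to the key property of $c$-regular rings recalled in Subsection~\ref{subsec:cregsmall}, namely that smallness of an $S$-module $N$ is detected by finite generation of $N_*$ over $S_*$. Combined with the relative smallness assumption on $R$ over $S$ built into the definition of a normalization, this suffices.

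Since $q^*$ is just restriction of scalars along $q$, the homotopy of $q^*M$ equals $M_*$ viewed as an $S_*$-module via the ring map $S_* \to R_*$ induced by $q$. Because $S$ is $c$-regular, the criterion recalled above gives
\begin{displaymath}
q^*M \text{ is small over } S \iff M_* \text{ is finitely generated over } S_*.
\end{displaymath}
Thus the lemma reduces to the assertion that $M_*$ is finitely generated over $S_*$ if and only if it is finitely generated over $R_*$.

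For the forward implication this is immediate: any $S_*$-generating set for $M_*$ is also an $R_*$-generating set via the structure map $S_* \to R_*$. For the reverse implication we invoke the normalization hypothesis, which guarantees that $R$ itself is small over $S$. Applying the $c$-regularity criterion to $N = q^*R$ we conclude that $R_*$ is finitely generated as an $S_*$-module. Hence if $\{m_1,\ldots,m_s\}$ generate $M_*$ over $R_*$ and $\{r_1,\ldots,r_t\}$ generate $R_*$ over $S_*$, then the finite set $\{r_im_j\}$ generates $M_*$ over $S_*$, completing the reverse direction.

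There is no serious obstacle here; the content of the lemma lies entirely in the $c$-regularity criterion combined with the transitivity of finite generation along a finite ring extension $S_* \to R_*$, which is ensured precisely by the normalization assumption.
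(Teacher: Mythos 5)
Your proof is correct and follows essentially the same route as the paper: apply the $c$-regularity criterion to identify $q$-finite generation with finite generation of $M_*$ over $S_*$, then transfer between $S_*$ and $R_*$ using that $R_*$ is a finitely generated $S_*$-module (which, as you note, follows from $R$ being small over $S$). The extra detail you supply on the transitivity of finite generation is exactly what the paper leaves implicit.
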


\begin{proof}
By definition $M$ is $q$-finitely generated if and only if $q^*M$ is
small. Since $S$ is coefficient-regular, this happens if and only 
if $q^*M_*$ is finitely generated over $S_*$ (as noted above). Since  $R_*$ is finitely generated
as an $S_*$-module, $q^*M_*$ is a finitely generated $S_*$-module if
and only if $M_*$ is a finitely generated $R_*$-module. 
\end{proof}

\begin{prop}
\label{prop:cfgisf}
If $S\stackrel{q}\lra R$ is a coefficient-normalization then 
$$\DRfbbS=\sfD^f(R)=\{ M\st M_* \mbox{ is finitely generated over } R_*\}. $$
In particular, the left-hand side is independent of the chosen coefficient-normalization.\qed
\end{prop}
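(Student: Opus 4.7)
The plan is to recognize that this proposition is essentially a reformulation of the preceding Lemma \ref{lem:creg}. First I would unwind the definition: by construction
\[
\DRfbbS = \{ M \in \sfD(R) \mid q^*M \finbuiltby S\},
\]
which is precisely the class of $R$-modules that are $q$-finitely generated. Lemma \ref{lem:creg} gives a clean intrinsic characterization of this class: under the hypothesis that $S$ is $c$-regular (and $q$ is a normalization so that $R_*$ is finitely generated over $S_*$), the condition ``$q^*M$ is small over $S$'' is equivalent to ``$M_*$ is finitely generated over $R_*$.'' Substituting this characterization into the definition yields $\DRfbbS = \sfD^f(R)$.

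For the final sentence I would simply note that the right-hand side $\sfD^f(R) = \{M \mid M_* \text{ is finitely generated over } R_*\}$ depends only on $R$ and not on any choice of map $q\colon S \to R$. Consequently, if $q'\colon S'\to R$ is any other $c$-normalization then the same equality gives $\sfD(\Rmod \finbuiltby_{{q'}^*} S') = \sfD^f(R) = \DRfbbS$, yielding the asserted independence.

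There is no real obstacle here: the substantive content lies in Lemma \ref{lem:creg}, which itself reduces to the characterization (cited from \cite{JohnNotes}) that over a $c$-regular ring $S$ an $S$-module is small exactly when its homotopy is finitely generated over $S_*$. Once that characterization is in hand, the proposition is a one-line deduction, and it is worth keeping the proof short so that the reader sees that $c$-normalizations behave in the most transparent possible way — this will serve as a sanity check and baseline for the more subtle dependence results proved later under coherence or Gorenstein hypotheses.
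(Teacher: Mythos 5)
Your argument is exactly the paper's: the proposition is stated with a \qed precisely because it is the immediate consequence of Lemma~\ref{lem:creg} that you describe, with the independence following since $\sfD^f(R)$ makes no reference to $q$. Correct, and no further commentary is needed.
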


We will show in Subsection \ref{subsec:Gorfg} that the corresponding result
holds very generally for complete Gorenstein normalizations. 

\subsection{Locally finitely presented functors}

We next compare our definition to one coming from a more abstract notion of finiteness, namely that of being locally finitely presented. %, as is used for instance in \cite{Rdim}.

We fix a base commutative ring $A$ (for instance $\ZZ$). Let $\sfK$ be an $A$-linear triangulated category and let $F$ be an $A$-linear functor
\begin{displaymath}
F\colon \sfK^\op \to \AMod.
\end{displaymath}

\begin{defn}\label{def:lfp}
We say that $F$ is \emph{locally finitely generated} if for every $k\in \sfK$ there is an $l\in \sfK$ (allowed to depend upon $k$) and a natural transformation
\begin{displaymath}
\alpha \colon \sfK(-,l) \to F
\end{displaymath}
such that for all $i\in \ZZ$ the component
\begin{displaymath}
\alpha_{\Sigma^i k}\colon \sfK(\Sigma^i k,l) \to F(\Sigma^i k)
\end{displaymath}
is surjective.

We say $F$ is \emph{locally finitely presented} if it is locally finitely generated and for any natural transformation $\sfK(-,l)\to F$ the kernel, taken in the functor category, is again locally finitely generated.
\end{defn}

Following Rouquier \cite{Rdim} it is convenient to formulate being locally finitely presented in the following slightly more tractable fashion. Given a functor $F$ and an object $k\in \sfK$ we can consider the conditions:
\begin{itemize}
\item[(a)] there is an $l\in \sfK$ and an $\alpha\colon \sfK(-,l)\to F$ such that $\alpha_{\Sigma^i k}$ is surjective for all $i\in \ZZ$;
\item[(b)] for every $\beta\colon \sfK(-,m) \to F$ there is an $f\colon n\to m$ such that \mbox{$\beta\circ \sfK(-,f) = 0$} and
\begin{displaymath}
\xymatrix{
\sfK(\Sigma^i k, n) \ar[rr]^-{\sfK(\Sigma^i k, f)} && \sfK(\Sigma^i k, m) \ar[r]^{\beta_{\Sigma^i k}} & F(\Sigma^i k)
}
\end{displaymath}
is exact for each $i\in \ZZ$.
\end{itemize}
It is straightforward to check that $F$ is locally finitely presented if and only if it satisfies conditions (a) and (b) for every object of $\sfK$. 

Now let us fix a triangulated category $\sfT$ with small coproducts and a generating set of small objects (i.e.\ $\sfT$ is compactly generated) and let $\sfT^\mathrm{c}$ denote the thick subcategory of small objects. Our main interest in Definition~\ref{def:lfp} is that it provides a very natural class of objects in $\sfT$ which is intrinsically defined (via the compact objects).

\begin{defn}\label{defn:clfp}
We say an object $X$ of $\sfT$ is \emph{cohomologically locally finitely generated} (respectively \emph{presented}) if the functor it represents when restricted to $\sfT^\mathrm{c}$ is locally finitely generated (respectively presented), i.e.\ $\sfT(-,X)\vert_{\sfT^\mathrm{c}}$ is locally finitely generated (presented).
\end{defn}

We denote by $\sfT^{\mathrm{lfp}}$ the full subcategory of cohomologically locally finitely presented objects and recall from \cite{Rdim}*{Proposition~4.28} that it is a thick subcategory of $\sfT$. Setting $\sfT = \sfD(R)$, this gives another candidate for the bounded derived category of a ring spectrum (which has the benefit of making sense in more abstract contexts). 

\subsection{Coherent classical generators}

In this section we again fix a triangulated category $\sfK$, over some base ring $A$, which we assume for simplicity is idempotent complete. We will assume $\sfK$ has a classical generator $g$, i.e. there is an equality
\begin{displaymath}
\sfK = \thick(g).
\end{displaymath}
Put yet another way we have $g \finbuilds k$ for every $k\in \sfK$. We can make the generation process a bit more explicit as follows. We define $\langle g \rangle_1$ to be the closure of $\{\Sigma^ig\; \vert \; i\in \ZZ\}$ under finite direct sums and summands. We then inductively define $\langle g \rangle_{i+1}$ to be the full subcategory of $\sfK$ consisting of those objects $k$ for which there is a $k'$ and a triangle
\begin{displaymath}
l \to k\oplus k' \to m \to \Sigma l
\end{displaymath}
with $l\in \langle g \rangle_i$ and $m\in \langle g \rangle_1$. Thus
$\langle g \rangle_{i+1}$ consists of those objects which $g$ builds
by taking at most $i$ cones. The above makes sense for any object of
$\sfK$ and the statement that $\sfK = \thick(g)$ just says the union
of the $\langle g \rangle_i$ is $\sfK$. 

Given objects $k$ and $k'$ in $\sfK$ we set
\begin{displaymath}
\sfK^*(k,k') = \bigoplus_{i\in \ZZ} \sfK(k, \Sigma^ik').
\end{displaymath}

Recall that an additive functor $F\colon \sfK^\op \to \AMod$ is \emph{cohomological} if it sends triangles to long exact sequences. 

\begin{lem}\label{lem_thick_fp}
Let $k$ be an object of $\sfK$ and suppose that $\sfK^*(k,k)$ is a coherent graded ring. If $l\in \thick(k)$ then $\sfK^*(k,l)$ is a finitely presented $\sfK^*(k,k)$-module.
\end{lem}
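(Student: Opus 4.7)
The plan is to induct on the smallest $i$ such that $l \in \langle k\rangle_i$, using the long exact sequence obtained by applying $\sfK^*(k,-)$ to the defining triangle and the fact that finitely presented modules form an abelian subcategory when the base ring is coherent.

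For the base case, if $l \in \langle k\rangle_1$ then $l$ is a summand of a finite direct sum of shifts of $k$. Accordingly $\sfK^*(k,l)$ is a summand of a finite direct sum of shifts of $\sfK^*(k,k)$, which is finitely generated free, hence finitely presented (direct summands of finitely presented modules are finitely presented over a coherent ring).

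For the inductive step, suppose the claim holds for $\langle k\rangle_i$ and let $l\in \langle k\rangle_{i+1}$. By definition there is an $l'$ and a triangle $m\to l\oplus l' \to n \to \Sigma m$ with $m \in \langle k\rangle_i$ and $n \in \langle k\rangle_1$. Applying the cohomological functor $\sfK^*(k,-)$ produces a long exact sequence of $\sfK^*(k,k)$-modules. Writing $A = \sfK^*(k,m)$, $B = \sfK^*(k,l\oplus l')$ and $C = \sfK^*(k,n)$, this yields a short exact sequence
\begin{displaymath}
0 \to \coker\bigl(C[-1]\to A\bigr) \to B \to \ker\bigl(C\to A[1]\bigr) \to 0,
\end{displaymath}
where $[\,\cdot\,]$ denotes the internal grading shift. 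By the inductive hypothesis together with the base case, both $A$ and $C$ (and their shifts) are finitely presented $\sfK^*(k,k)$-modules. Since $\sfK^*(k,k)$ is coherent, the category of finitely presented modules is abelian and closed under extensions, so the kernel and cokernel appearing above are finitely presented, and therefore so is $B$.

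Finally, $\sfK^*(k,l)$ is a direct summand of $B$, and coherence ensures summands of finitely presented modules are finitely presented, completing the induction. The only substantive input beyond bookkeeping is the closure properties of finitely presented modules over a coherent ring, so this is the place where the coherence hypothesis is essential; the rest is a routine application of the triangulated long exact sequence combined with the filtration $\sfK = \bigcup_i \langle k\rangle_i$.
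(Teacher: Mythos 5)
Your proof is correct and follows essentially the same route as the paper's: induction on $i$ with $l\in\langle k\rangle_i$, the long exact sequence from the defining triangle, and the closure properties of finitely presented modules over a coherent ring. You merely make explicit (via the kernel/cokernel short exact sequence) the step the paper summarizes as ``all but the middle term are finitely presented, hence so is the middle term.''
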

\begin{proof}
We proceed by induction on the number of cones required to build $l$ from $k$. If $l\in \langle k \rangle_1$ then the statement is clear. Suppose then that the statement holds for objects of $\langle k \rangle_{i-1}$ and let $l\in \langle k \rangle_{i}$. By definition there is a triangle
\begin{displaymath}
m \to l' \to n \to \Sigma m
\end{displaymath}
with $m\in \langle k \rangle_{i-1}$, $n\in \langle k \rangle_1$ and $l$ a summand of $l'$. This triangle gives rise to an exact sequence of graded modules
\begin{displaymath}
\sfK^*(k,\Sigma^{-1}n) \to \sfK^*(k,m) \to \sfK^*(k,l') \to \sfK^*(k,n) \to \sfK^*(k,\Sigma m).
\end{displaymath}
By the induction hypothesis all but the middle term are finitely presented and it follows, from coherence of $\sfK^*(k,k)$, that $\sfK^*(k,l')$ is also finitely presented. It is then clear that $\sfK^*(k, l)$ is also finitely presented as required.
\end{proof}

\begin{prop}\label{prop_clfp}
Suppose that $\sfK = \thick(g)$ as above and that, in addition, $\sfK^*(g,g)$ is a coherent graded ring. Then a cohomological functor $F$ on $\sfK$ is locally finitely presented if and only if $\bigoplus_{i\in \ZZ}F(\Sigma^i g)$ is finitely presented over $\sfK^*(g,g)$.
\end{prop}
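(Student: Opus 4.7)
The plan is to argue first at the generator $g$ using Lemma~\ref{lem_thick_fp} directly, and then propagate conditions (a) and (b) from $g$ to arbitrary objects of $\sfK = \thick(g)$ via a thickness argument.

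For the forward direction, suppose $F$ is locally finitely presented. Applying condition (a) at $k = g$ yields $\alpha \colon \sfK(-, l) \to F$ with $\alpha_{\Sigma^i g}$ surjective for every $i \in \ZZ$; summing over $i$ produces a surjection of graded $\sfK^*(g,g)$-modules $\sfK^*(g, l) \twoheadrightarrow M$, where $M = \bigoplus_{i\in\ZZ} F(\Sigma^i g)$. By Lemma~\ref{lem_thick_fp}, $\sfK^*(g, l)$ is finitely presented, in particular finitely generated, hence so is $M$. Condition (b) applied to $\alpha$ then supplies $f \colon n \to l$ making $\sfK^*(g, n) \to \sfK^*(g, l) \to M$ exact; since Lemma~\ref{lem_thick_fp} also yields $\sfK^*(g, n)$ finitely generated, the kernel of $\sfK^*(g, l) \twoheadrightarrow M$ is finitely generated, so $M$ is finitely presented.

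For the backward direction, I first verify (a) and (b) at $k = g$. Fix a finite presentation $P_1 \to P_0 \to M \to 0$ with $P_0, P_1$ finitely generated graded free over $\sfK^*(g, g)$. Choosing bases identifies $P_0 = \sfK^*(g, l_0)$ and $P_1 = \sfK^*(g, n_0)$ for $l_0, n_0$ finite direct sums of shifts of $g$, and the differential is realized as $\sfK(-, f)$ for some $f \colon n_0 \to l_0$. Via the Yoneda identification $\mathrm{Nat}(\sfK(-, l_0), F) \cong F(l_0)$, the surjection $P_0 \twoheadrightarrow M$ corresponds to a natural transformation $\alpha \colon \sfK(-, l_0) \to F$, which by construction verifies (a) at $g$. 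For (b) at $g$, given any $\beta \colon \sfK(-, m) \to F$, the induced map $\sfK^*(g, m) \to M$ has finitely generated kernel --- by coherence of $\sfK^*(g, g)$ combined with Lemma~\ref{lem_thick_fp} (giving $\sfK^*(g, m)$ finitely presented) and the hypothesis that $M$ is finitely presented --- and lifting a finite set of homogeneous generators produces the required $f \colon n \to m$ with $n$ a finite direct sum of shifts of $g$.

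It remains to show that conditions (a) and (b) propagate from $g$ to every object of $\sfK$. The plan is to check that the full subcategory of $\sfK$ at which both conditions hold is a thick subcategory; it is closed under shifts and retracts directly from the definitions, while closure under cones is the main technical step. Given a triangle $k_1 \to k_2 \to k_3 \to \Sigma k_1$ with (a) and (b) holding at two of the three vertices, one manufactures suitable data at the third by diagram-chasing the long exact sequences obtained by applying $F$ and $\sfK(-, l)$ (for appropriate $l$) to the triangle. The naive choice of $l$ as the direct sum of the $l_i$ attached to the known vertices yields lifts only modulo connecting-map obstructions; these obstructions land in kernels of the relevant $\alpha_i$, which by condition (b) at the adjacent vertices are images of representables, allowing one to enlarge $l$ and absorb the obstructions. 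This cone-closure diagram chase is the main obstacle of the argument.
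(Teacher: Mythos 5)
Your treatment of the two directions at the generator $g$ is correct and essentially identical to the paper's: the forward direction extracts an exact sequence $\sfK^*(g,m)\to\sfK^*(g,l)\to\bigoplus_iF(\Sigma^ig)\to 0$ from conditions (a) and (b) at $g$ and applies Lemma~\ref{lem_thick_fp}; the backward direction realizes a finite presentation of $\bigoplus_iF(\Sigma^ig)$ by representables via Yoneda and uses coherence to get (b) at $g$. The place where you diverge is the reduction step, namely that verifying (a) and (b) at $g$ suffices to verify them at every object of $\sfK=\thick(g)$. The paper does not prove this; it invokes \cite{Rdim}*{Lemma~4.6}, which states exactly that (for a cohomological functor) the class of objects at which (a) and (b) hold is thick. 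You instead set out to prove this closure property yourself, and while your strategy is the right one --- closure under shifts and retracts is formal, and closure under cones is a diagram chase through the long exact sequences that the cohomological hypothesis on $F$ provides --- you explicitly label the cone-closure chase ``the main obstacle'' and describe only how the connecting-map obstructions would be absorbed, without carrying out the argument. As it stands this is the one genuine gap: the heart of the reduction is asserted rather than proved. The fix is either to execute that chase in detail or, as the paper does, to cite Rouquier's lemma; with either repair the rest of your argument goes through.
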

\begin{proof}
Suppose first that $F$ is locally finitely presented. Then by conditions (a) and (b) at $g$ there are natural transformations
\begin{displaymath}
\sfK(-,m) \to \sfK(-,l) \to F
\end{displaymath}
such that the sequence of $\sfK^*(g,g)$-modules
\begin{displaymath}
\sfK^*(g,m) \to \sfK^*(g,l) \to \bigoplus_{i\in \ZZ}F(\Sigma^i g) \to 0
\end{displaymath}
is exact. By the previous lemma, using that $g$ classically generates, the first two terms of this sequence are finitely presented and thus so is the cokernel.

On the other hand, let us suppose that $\bigoplus_{i\in \ZZ}F(\Sigma^i g)$ is a finitely presented $\sfK^*(g,g)$-module. By \cite{Rdim}*{Lemma~4.6} it is enough to check conditions (a) and (b) at the object $g$. Condition (a) is clear as we can just pick a finitely generated graded free module mapping onto $\bigoplus_{i\in \ZZ}F(\Sigma^i g)$ and Yoneda gives us the desired natural transformation.

Suppose we are given, with a view to verifying (b), a natural transformation
\begin{displaymath}
f\colon \sfK(-,m) \to F.
\end{displaymath}
Then, since $\sfK^*(g,g)$ is coherent, the module $(\ker f)\vert_{\{\Sigma^i g \; \vert \; i\in \ZZ\}}$ is finitely presented by virtue of being the kernel of a map between finitely presented modules. Thus using (a) for the kernel we can produce the sequence required in (b). 
\end{proof}

\subsection{A criterion for g-regularity}

Now let us again return to our standard setting of a fixed ring
spectrum $R$ with an augmentation to a field $k$. In this section,
which is somewhat of an aside, we give a criterion for $R$ to be
g-regular in terms of strong generation of the full subcategory of
small $R$-modules. Recall that $\sfD^\mathrm{c}(R)=\DRfbbR$. In this context cohomologically locally finitely presented will always mean with respect to the small modules. We denote by $\sfD^\mathrm{lfp}(R)$ the thick subcategory of cohomologically locally finitely presented modules.

Throughout this section we will assume the augmentation $R\to k$ is surjective on homotopy, i.e.\ $R_* \to k$ is a surjection. We will denote by $I$ the ``augmentation ideal'' which is defined by the triangle
\begin{displaymath}
I \to R\to k \to \Sigma I
\end{displaymath}
and has homotopy the usual graded augmentation ideal $I_* = \ker(R_* \to k)$.

\begin{lem}\label{lem_k_fp}
Suppose that $R_*$ is coherent. Then the following are equivalent:
\begin{itemize}
\item[$(1)$] $k$ is cohomologically locally finitely presented in $\sfD(R)$;
\item[$(2)$] $k$ is a finitely presented $R_*$-module;
\item[$(3)$] $I_*$ is finitely generated as a $R_*$-module.
\end{itemize}
\end{lem}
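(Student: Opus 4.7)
The plan is to derive $(2)\Leftrightarrow(3)$ by elementary module theory from the augmentation triangle, and $(1)\Leftrightarrow(2)$ as an application of Proposition~\ref{prop_clfp} with classical generator $R$.

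For $(2)\Leftrightarrow(3)$: I would start by taking homotopy of the defining triangle $I \to R \to k \to \Sigma I$. Since $k$ is concentrated in degree $0$ and by hypothesis $R_* \to k$ is surjective, the long exact sequence degenerates to a short exact sequence of graded $R_*$-modules
\begin{displaymath}
0 \to I_* \to R_* \to k \to 0.
\end{displaymath}
Since $R_*$ itself is finitely presented over $R_*$ and $R_* \to k$ is already a finitely generated free presentation of the cokernel, $k$ is finitely presented if and only if the kernel $I_*$ is finitely generated; this is exactly the equivalence of $(2)$ and $(3)$ (and here coherence is not even needed).

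For $(1)\Leftrightarrow(2)$: I would apply Proposition~\ref{prop_clfp} to the triangulated category $\sfK = \sfD^\mathrm{c}(R)$, which is idempotent complete and classically generated by $g=R$. Then $\sfK^*(R,R) = R_*$ is coherent by hypothesis, so the proposition applies. Restriction of the cohomological functor $\sfD(R)(-,k)$ to $\sfD^\mathrm{c}(R)$ gives the functor $F$, and
\begin{displaymath}
\bigoplus_{i\in\ZZ} F(\Sigma^i R) = \pi_*\Hom_R(R,k) = \pi_* k = k,
\end{displaymath}
concentrated in degree $0$, with its $R_*$-module structure induced by the augmentation. Proposition~\ref{prop_clfp} then says $F$ is locally finitely presented if and only if $k$ is finitely presented over $R_*$, which is precisely $(1)\Leftrightarrow(2)$.

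I do not expect any real obstacle here: the equivalence $(2)\Leftrightarrow(3)$ is immediate from the short exact sequence, and the equivalence $(1)\Leftrightarrow(2)$ is a direct unpacking of Proposition~\ref{prop_clfp} once one identifies the graded module $\bigoplus_i F(\Sigma^i R)$ with $k$. The one minor point to verify carefully is that the coherence hypothesis on $R_*$ is what both makes Proposition~\ref{prop_clfp} applicable with $g=R$ and makes the notion of ``finitely presented $R_*$-module'' behave well (e.g.\ closed under kernels of maps between finitely presented modules), so that conditions $(2)$ and $(3)$ are stable under the manipulations invoked.
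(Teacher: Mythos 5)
Your proposal is correct and follows essentially the same route as the paper: the equivalence $(1)\Leftrightarrow(2)$ is obtained by applying Proposition~\ref{prop_clfp} with classical generator $g=R$ of $\sfD^\mathrm{c}(R)$ and identifying $\bigoplus_i\sfD(R)(\Sigma^iR,k)$ with $k$, while $(2)\Leftrightarrow(3)$ is read off from the short exact sequence $0\to I_*\to R_*\to k\to 0$, i.e.\ is just the definition of finite presentation. Your additional observation that coherence is not needed for $(2)\Leftrightarrow(3)$ is accurate.
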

\begin{proof}
Since $R_*$ is coherent and $R$ classically generates $\sfD^\mathrm{c}(R)$ the statement that (1) holds if and only if (2) holds is just Proposition~\ref{prop_clfp}. That (2) and (3) are equivalent is just the definition of finite presentation. 
\end{proof}

We recall that a triangulated category $\sfK$ is called {\em strongly 
  generated} if there is an object $g$ and an $n$ for which 
$\sfK=\langle g\rangle_n$. This is a somewhat restrictive condition: the finite stable homotopy category is not strongly generated, and if the category of perfect complexes over a finitely generated $k$-algebra $A$ is strongly generated then every finitely generated $A$-module has finite projective dimension \cite{OS12}. On the other hand, the bounded derived category of a noetherian $k$-algebra is known to be strongly generated in many examples \cite{elagin2018smoothness, neeman2017strong} and the category of perfect complexes over a homologically smooth DG-algebra is strongly generated \cite[Lemmas 3.5, 3.6]{Lunts}.

\begin{prop}\label{prop:gcriterion}
Suppose that $R_*$ is coherent and $I_*$ is a finitely generated
$R_*$-module. If $\sfD^\mathrm{c}(R)$ is strongly generated, then the ring spectrum $R$ is g-regular.
\end{prop}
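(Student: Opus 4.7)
The plan is to combine the criterion already established in Lemma~\ref{lem_k_fp} with the representability theorem available in strongly generated triangulated categories. Under the hypotheses, the coherence of $R_*$ together with finite generation of $I_*$ gives, via Lemma~\ref{lem_k_fp}, that $k$ is cohomologically locally finitely presented, i.e.\ the restriction
\begin{displaymath}
F := \sfD(R)(-,k)\vert_{\sfD^{\mathrm{c}}(R)}
\end{displaymath}
is a locally finitely presented cohomological functor on $\sfD^{\mathrm{c}}(R)$. So the task reduces to upgrading this to an honest representation inside $\sfD^{\mathrm{c}}(R)$.

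This is where strong generation is used. By a theorem of Rouquier (the one underlying the ``analogue of Brown representability'' referenced in the introduction: \cite{Rdim}*{Theorem~4.16} together with the discussion around Proposition~\ref{prop_clfp}), in a strongly generated, idempotent complete, $\Hom$-finitely-presented triangulated category, every locally finitely presented cohomological functor is representable by an object of the category itself. Since $\sfD^{\mathrm{c}}(R) = \thick(R)$ is strongly generated by assumption and $\sfD^{\mathrm{c}}(R)^*(R,R) = R_*$ is coherent (so Proposition~\ref{prop_clfp} applies), we may apply this theorem to $F$ and obtain $k'\in \sfD^{\mathrm{c}}(R)$ together with a natural isomorphism $\sfD(R)(-,k')\vert_{\sfD^{\mathrm{c}}(R)} \iso F$.

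By Yoneda this natural isomorphism is induced by a morphism $k'\to k$ in $\sfD(R)$ which, by construction, induces an isomorphism $\sfD(R)(c,k') \to \sfD(R)(c,k)$ for every small object $c\in \sfD^{\mathrm{c}}(R)$. Since $\sfD(R)$ is compactly generated, such a map must be an isomorphism in $\sfD(R)$, and hence $k\simeq k'\in \sfD^{\mathrm{c}}(R)$. This is precisely the statement that $R\finbuilds k$, i.e.\ $R$ is g-regular.

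The main obstacle is simply verifying that the hypotheses of the representability theorem match our setup: specifically that strong generation of $\sfD^{\mathrm{c}}(R)$ plus coherence of the graded endomorphism ring $\sfD^{\mathrm{c}}(R)^*(R,R)=R_*$ suffice to promote a locally finitely presented cohomological functor to a representable one within $\sfD^{\mathrm{c}}(R)$. Everything else is routine: Lemma~\ref{lem_k_fp} produces the local finite presentation, and compact generation of $\sfD(R)$ promotes the pointwise isomorphism on compacts to a genuine equivalence.
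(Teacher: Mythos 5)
Your proposal is correct and follows essentially the same route as the paper: apply Lemma~\ref{lem_k_fp} to see that $k$ is cohomologically locally finitely presented, then invoke Rouquier's representability theorem \cite{Rdim}*{Theorem~4.16} for the strongly generated category $\sfD^\mathrm{c}(R)$ to conclude $k$ is small. The only difference is that you spell out the final Yoneda/compact-generation step identifying $k$ with the representing compact object, which the paper leaves implicit.
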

\begin{proof}
By the lemma $k$ is a cohomologically locally finitely presented object of $\sfD(R)$. As $\sfD^\mathrm{c}(R)$ is strongly generated the representability theorem \cite{Rdim}*{Theorem~4.16} applies and tells us that in fact $k\in \sfD^\mathrm{c}(R)$, i.e.\ we have $R\finbuilds k$; this is nothing other than the definition of g-regularity of $R$.
\end{proof}

\subsection{Smooth coherent normalizations}

We now compare the definition we have given of the bounded derived
category, relative to a normalization, in Section~\ref{sec:Db} to
the category of cohomologically locally finitely presented objects. 
 We prove the following theorem.

\begin{thm}
\label{thm:blfp}
Let $q\colon S\to R$ be a normalization of $R\to k$. If $S_*$ is coherent then there is a containment
\begin{displaymath}
\sfD^{q\mathrm{-b}}(R) \subseteq \sfD^\mathrm{lfp}(R).
\end{displaymath}
Moreover, if $\sfD^\mathrm{c}(S)$ is strongly generated this containment is an equality.
\end{thm}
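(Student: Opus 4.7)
My plan is to verify the local finite presentation conditions (a) and (b) at the classical generator $R \in \sfD^\mathrm{c}(R)$; this suffices by the reduction \cite{Rdim}*{Lemma~4.6} invoked in the proof of Proposition~\ref{prop_clfp}. The initial observation is that restriction along a g-normalization $q$ sends $\sfD^\mathrm{c}(R) = \thick_R(R)$ into $\thick_S(R) \subseteq \thick_S(S) = \sfD^\mathrm{c}(S)$ (since $R$ is small over $S$), so Lemma~\ref{lem_thick_fp} applied to the classical generator $S$ of $\sfD^\mathrm{c}(S)$, together with coherence of $S_*$, shows that for every $N \in \sfD^{q\mathrm{-b}}(R) \cup \sfD^\mathrm{c}(R)$ the homotopy $N_*$ is a finitely presented $S_*$-module.

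For the first inclusion, fix $M \in \sfD^{q\mathrm{-b}}(R)$. To check (a) at $R$, I will pick finitely many homogeneous $S_*$-generators $m_j \in M_{d_j}$ of $M_*$, realise them via Yoneda as maps $\Sigma^{d_j}R \to M$, and assemble them into $v\colon V = \bigoplus_j \Sigma^{d_j}R \to M$ with $V \in \sfD^\mathrm{c}(R)$; the induced $V_* \to M_*$ is surjective in every degree because any $S_*$-generating set for $M_*$ is automatically $R_*$-generating. To check (b) at $R$ for a given $u\colon U \to M$ with $U \in \sfD^\mathrm{c}(R)$, I will observe that both $U_*$ and $M_*$ are finitely presented over the coherent ring $S_*$, so $\ker(u_*)$ is finitely generated over $S_*$ (being the kernel of a map between finitely presented modules); lifting a finite generating set via Yoneda yields a map $h\colon W \to U$ from a finite sum $W$ of suspensions of $R$, with $u \circ h = 0$ in $\sfD(R)$ and $\im(h_*) = \ker(u_*)$.

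For the reverse inclusion, assume $\sfD^\mathrm{c}(S)$ is strongly generated and fix $M \in \sfD^\mathrm{lfp}(R)$. Applying (a) and (b) at $R$ to the functor represented by $M$ produces compact $R$-modules $V, W$ and maps $W \xrightarrow{h} V \xrightarrow{v} M$ yielding an exact sequence $W_* \to V_* \to M_* \to 0$ of graded $S_*$-modules. Since $V_*$ and $W_*$ are finitely presented over $S_*$ by the initial observation, $M_*$ is itself finitely presented over $S_*$. Proposition~\ref{prop_clfp} applied in $\sfD(S)$ then identifies $q^*M$ as cohomologically locally finitely presented, and strong generation of $\sfD^\mathrm{c}(S)$ allows the representability theorem \cite{Rdim}*{Theorem~4.16} (as already used earlier in this section) to upgrade this to $q^*M \in \sfD^\mathrm{c}(S)$, giving $M \in \sfD^{q\mathrm{-b}}(R)$.

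The main point requiring care is verifying that the Yoneda lifts in (a) and (b) produce honest maps in $\sfD(R)$ satisfying the desired vanishing and exactness naturally on all compact objects, not just at $R$; this works smoothly because $V$ and $W$ are direct sums of suspensions of $R$, so everything reduces to identities in the graded homotopy of $M$, and \cite{Rdim}*{Lemma~4.6} handles the passage from the classical generator $R$ to arbitrary compacts.
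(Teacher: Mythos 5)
Your argument is correct and follows the same basic strategy as the paper's proof: translate cohomological local finite presentation into finite presentation of homotopy groups over a coherent graded coefficient ring via Lemma~\ref{lem_thick_fp} and the reduction to a classical generator, and for the reverse inclusion pass to $\sfD(S)$ and invoke strong generation together with Rouquier's representability theorem. The one genuine difference is in how you handle the forward inclusion: the paper first shows $R_*$ is finitely presented over $S_*$ and hence coherent, upgrades $X_*$ to a finitely presented $R_*$-module, and then applies Proposition~\ref{prop_clfp} with classical generator $R$; you instead verify conditions (a) and (b) at $R$ by hand, doing all the module-theoretic work over $S_*$ (using that an $S_*$-generating set is automatically $R_*$-generating, and that kernels of maps of finitely presented modules over the coherent ring $S_*$ are finitely generated). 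This sidesteps the coherence-transfer step for $R_*$ entirely, at the cost of re-deriving the content of Proposition~\ref{prop_clfp} in this special case; both routes rest on the same two external inputs from Rouquier (the reduction of (a) and (b) to a classical generator, and Theorem~4.16), and your Yoneda-lifting details --- in particular that a map $\Sigma^{e}R\to M$ vanishes if and only if its homotopy class does, so the lifted maps genuinely compose to zero in $\sfD(R)$ --- are sound.
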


As a consequence we obtain, at least under mild assumptions, another invariance result for our definition of the bounded derived category.

\begin{cor}
\label{cor:sgfgislfp}
Suppose $S\stackrel{q}{\to}R$ and $S'\stackrel{q'}{\to}R$ are normalizations of $R$ with $S_*$ and $S'_*$ coherent and both $\sfD^\mathrm{c}(S)$ and $\sfD^\mathrm{c}(S')$ strongly generated. Then
\begin{displaymath}
\sfD^{q\mathrm{-b}}(R) = \sfD^{q'\mathrm{-b}}(R)
\end{displaymath}
as thick subcategories of $\sfD(R)$.
\end{cor}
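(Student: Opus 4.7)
The plan is to derive this corollary as a direct consequence of Theorem~\ref{thm:blfp}, which has already been stated immediately above. The key observation is that the category $\sfD^{\mathrm{lfp}}(R)$ is defined intrinsically in terms of $\sfD^\mathrm{c}(R)$, with no reference to any normalization. So if we can identify both $\sfD^{q\mathrm{-b}}(R)$ and $\sfD^{q'\mathrm{-b}}(R)$ with $\sfD^{\mathrm{lfp}}(R)$, they must coincide as thick subcategories of $\sfD(R)$.

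Concretely, I would proceed in two steps. First, I would note that the hypotheses on $q$ are exactly those required by the ``moreover'' clause of Theorem~\ref{thm:blfp}: namely $S_*$ is coherent and $\sfD^\mathrm{c}(S)$ is strongly generated. Hence this theorem applies to yield the equality
\begin{displaymath}
\sfD^{q\mathrm{-b}}(R) = \sfD^{\mathrm{lfp}}(R).
\end{displaymath}
Second, I would apply the same theorem to the normalization $q'$, since by hypothesis $S'_*$ is coherent and $\sfD^\mathrm{c}(S')$ is strongly generated. This gives
\begin{displaymath}
\sfD^{q'\mathrm{-b}}(R) = \sfD^{\mathrm{lfp}}(R).
\end{displaymath}
Combining the two equalities finishes the argument.

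There is essentially no obstacle here once Theorem~\ref{thm:blfp} is in hand; the corollary is simply a formal consequence of the fact that the theorem provides an intrinsic description of $\sfD^{q\mathrm{-b}}(R)$ depending only on $R$ (and not on the particular normalization $q$) whenever the coefficient and strong generation hypotheses on the source of the normalization are satisfied. The real work has all been packaged into Theorem~\ref{thm:blfp}, whose proof in turn rests on the representability machinery from \cite{Rdim} together with Proposition~\ref{prop_clfp}.
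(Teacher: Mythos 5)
Your argument is exactly the intended one: the paper derives Corollary~\ref{cor:sgfgislfp} as an immediate consequence of Theorem~\ref{thm:blfp}, applying the equality case to each of $q$ and $q'$ so that both bounded derived categories coincide with the intrinsically defined $\sfD^{\mathrm{lfp}}(R)$. Your proof is correct and matches the paper's reasoning.
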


We begin by proving the containment that always holds.

\begin{prop}\label{prop:Rcoherent}
Let $q\colon S\to R$ be a normalization of $R$ and assume that $S_*$ is coherent. If $X$ in $\sfD(R)$ is $q$-finitely generated, i.e.\ $q^*X$ is small over $S$, then $X$ is cohomologically locally finitely presented over $R$, i.e.\
\begin{displaymath}
\sfD^{q\mathrm{-b}}(R) \subseteq \sfD^\mathrm{lfp}(R).
\end{displaymath}
\end{prop}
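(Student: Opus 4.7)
The plan is to verify the two characterizing conditions (a) and (b) for $F := \sfD(R)(-,X)\vert_{\sfD^\mathrm{c}(R)}$ to be locally finitely presented. By \cite{Rdim}*{Lemma~4.6}, used exactly as in the proof of Proposition~\ref{prop_clfp}, the subcategory of objects at which (a) and (b) hold is thick, so it suffices to verify both at the classical generator $R\in \sfD^\mathrm{c}(R)$.

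For (a) at $R$: since $q^*X\in \sfD^\mathrm{c}(S)=\thick_S(S)$ and $\sfD(S)^*(S,S)=S_*$ is coherent, Lemma~\ref{lem_thick_fp} gives that $X_*=\sfD(S)^*(S,q^*X)$ is finitely presented over $S_*$, hence in particular finitely generated by some homogeneous elements $x_1,\ldots,x_n$ of degrees $a_1,\ldots,a_n$. Each $x_i$ corresponds via Yoneda to a map $\Sigma^{a_i}S\to q^*X$ in $\sfD(S)$, which by the adjunction $q_*\dashv q^*$ yields an $R$-module map $\Sigma^{a_i}R\to X$. Summing these gives a morphism
\[
\phi\colon D := \bigoplus_{i=1}^{n}\Sigma^{a_i}R \longrightarrow X
\]
in $\sfD(R)$ with $D\in \sfD^\mathrm{c}(R)$. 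The induced $R_*$-linear map $D_*\to X_*$ has image the $R_*$-submodule of $X_*$ generated by the $x_i$, which contains their $S_*$-span and is therefore all of $X_*$. Equivalently, $\sfD(R)(\Sigma^i R, D)\twoheadrightarrow \sfD(R)(\Sigma^i R, X)$ is surjective for every $i\in \ZZ$, which is (a) at $R$.

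For (b) at $R$: by Yoneda a natural transformation $\sfD(R)(-,E)\to F$ with $E\in \sfD^\mathrm{c}(R)$ is determined by a map $g\colon E\to X$. Let $Y$ be the fibre of $g$ in $\sfD(R)$; the fibre triangle $Y\to E\to X$ shows, via exactness of $q^*$ and thickness of $\sfD^\mathrm{c}(S)$, that $q^*Y\in \sfD^\mathrm{c}(S)$, so $Y\in \sfD^{q\mathrm{-b}}(R)$. Since (a) has already been established for every object of $\sfD^{q\mathrm{-b}}(R)$, I apply it to $Y$ to obtain a small $N\in \sfD^\mathrm{c}(R)$ and a map $N\to Y$ surjective on $\pi_*$. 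Composing with $Y\to E$ yields $f\colon N\to E$ with $g\circ f=0$ (consecutive maps in a triangle), and the image of $N_{-i}\to E_{-i}$ coincides with the image of $\pi_{-i}Y\to E_{-i}$, which is precisely $\ker(E_{-i}\to X_{-i})$ by the long exact sequence of the fibre triangle. This is (b) at $R$.

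The main obstacle is that $R_*$ is not assumed coherent, so Proposition~\ref{prop_clfp} cannot be invoked directly; the coherence hypothesis must instead be transported from $S_*$, where it holds, to the $R$-module side via Lemma~\ref{lem_thick_fp} and the adjunction $q_*\dashv q^*$. Two subsidiary points require care: that an $S_*$-generating set of $X_*$ is automatically also $R_*$-generating (so the natural lift $D_*\to X_*$ is surjective), and that $\sfD^{q\mathrm{-b}}(R)$ is closed under fibres, which supplies the recursive input through $Y=\mathrm{fib}(g)$ in the proof of (b).
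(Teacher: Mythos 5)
Your proof is correct, but it takes a genuinely different route from the paper's. The paper stays entirely on the module-theoretic side: taking $X=R$ in Lemma~\ref{lem_thick_fp} it first shows $R_*$ is finitely presented over $S_*$ and hence is itself a coherent ring, then that $X_*$ is finitely presented over $R_*$, and finally invokes Proposition~\ref{prop_clfp} to conclude. (So your remark that Proposition~\ref{prop_clfp} ``cannot be invoked directly'' slightly mischaracterizes the situation --- it can, after one observes that coherence passes from $S_*$ to $R_*$; that observation is the whole point of the paper's argument, and it is reused in the proof of the reverse containment.) You instead verify Rouquier's conditions (a) and (b) by hand at the classical generator $R$: for (a) you use only \emph{finite generation} of $X_*$ over $S_*$ (supplied by Lemma~\ref{lem_thick_fp}) and the adjunction $q_*\dashv q^*$ to lift generators to a map from a finite sum of shifts of $R$; for (b) you take the fibre of the representing map $g\colon m\to X$, note that $\sfD^{q\mathrm{-b}}(R)$ is closed under fibres of maps from small objects, and feed the fibre back into (a). Both steps check out, including the exactness claim via the long exact sequence of the fibre triangle and the surjectivity of $N_*\to Y_*$. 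What the paper's route buys is brevity (given that Proposition~\ref{prop_clfp} is already in hand) plus the extra facts that $R_*$ is coherent and $X_*$ is finitely presented over it; what your route buys is a more elementary, self-contained argument that never needs coherence or finite presentation over $R_*$, only finite generation over $S_*$, and that makes transparent how thickness of $\sfD^{q\mathrm{-b}}(R)$ supplies the presentation condition.
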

\begin{proof}
Suppose that $q^*X$ lies in $\thick(S)$. Then $(q^*X)_* \cong X_*$ is a finitely presented $S_*$-module by Lemma~\ref{lem_thick_fp}. In particular, since $S\to R$ is a normalization, we can take $X=R$ to see that the $S_*$-module $R_*$ is finitely presented. In particular, the ring $R_*$ is also coherent.

%As $R_*$ is finitely presented over $S_*$ we deduce, from finite presentation of $X_*$ over $S_*$, that $X_*$ is a finitely presented $R_*$-module. 
We now prove that $X_*$ is a finitely presented $R_*$-module. Since $X_*$ is finitely presented over $S_*$ it follows that $X_*\otimes_{S_*} R_*$ is finitely presented over $R_*$. This latter module has $X_*$ as a quotient and so certainly $X_*$ is finitely generated. We conclude by choosing a surjection $R_*^{\oplus n} \to X_*$ and noting that, since $S_*$ is coherent, the kernel of this surjection is finitely presented over $S_*$ and hence finitely generated by the argument we have just given.

Using finite presentation of $X_*$ and coherence of $R_*$ we can apply Proposition~\ref{prop_clfp} which tells us that $X$ is cohomologically locally finitely presented in $\sfD(R)$.
\end{proof}

We now prove the reverse containment under the strong generation hypothesis.

\begin{prop}
Let $q\colon S\to R$ be a normalization of $R$ such that $S_*$ is coherent and $\sfD^\mathrm{c}(S)$ is strongly generated. Then if $X\in \sfD(R)$ is cohomologically locally finitely presented the module $q^*X$ is small over $S$, i.e.\
\begin{displaymath}
\sfD^{q\mathrm{-b}}(R) \supseteq \sfD^\mathrm{lfp}(R).
\end{displaymath}
\end{prop}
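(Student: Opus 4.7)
The strategy mirrors the preceding g-regularity proposition: we verify that $F = \sfD(S)(-, q^*X)|_{\sfD^\mathrm{c}(S)}$ is locally finitely presented and then invoke Rouquier's representability theorem \cite{Rdim}*{Theorem~4.16}. Strong generation of $\sfD^\mathrm{c}(S)$ makes the theorem applicable and produces a representing object for $F$ inside $\sfD^\mathrm{c}(S)$; since $F$ is already represented in $\sfD(S)$ by $q^*X$, this forces $q^*X \in \sfD^\mathrm{c}(S)$, i.e.\ $X\in \sfD^{q\mathrm{-b}}(R)$.

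The essential tool is the adjunction $q_*\dashv q^*$ together with the observation that the normalization hypothesis makes \emph{both} functors preserve small objects: $q_*S \simeq R \in \sfD^\mathrm{c}(R)$ trivially, and $q^*R \simeq R \in \sfD^\mathrm{c}(S)$ by relative g-regularity, so each functor sends thick closures into thick closures. Under the adjunction, $F$ is identified with $\sfD(R)(q_*-, X)|_{\sfD^\mathrm{c}(S)}$, and the local finite presentation of $X\in \sfD^\mathrm{lfp}(R)$ can be transported through $q_*$ to verify the defining conditions (a) and (b) for $F$.

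For (a) at $k\in \sfD^\mathrm{c}(S)$, apply condition (a) to $X$ at $q_*k\in \sfD^\mathrm{c}(R)$ to obtain a witness $\phi\colon l'\to X$ with $\sfD(R)(\Sigma^i q_*k, l')\to \sfD(R)(\Sigma^i q_*k, X)$ surjective for all $i$. Then $q^*l'\in \sfD^\mathrm{c}(S)$ and $q^*\phi\colon q^*l'\to q^*X$ is a witness at $k$, with surjectivity of the resulting map to $F(\Sigma^i k)$ following from the adjunction isomorphism.

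The substance lies in condition (b). Given $\beta\colon \sfD^\mathrm{c}(S)(-,m)\to F$ represented by $\psi\colon m\to q^*X$, pass to the adjoint $\hat\psi\colon q_*m\to X$ in $\sfD(R)$ and apply (b) to $X$ at $q_*k$ to obtain $g\colon n'\to q_*m$ in $\sfD^\mathrm{c}(R)$ with $\hat\psi\circ g = 0$ and the required exactness. To produce an arrow into $m$ rather than $q_*m$, form the homotopy pullback
\[
\xymatrix{
n \ar[r] \ar[d]_{f} & q^*n' \ar[d]^{q^*g} \\
m \ar[r]^{\eta_m} & q^*q_*m
}
\]
in $\sfD(S)$ along the unit $\eta_m$ of the adjunction. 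All three non-pullback vertices are small, hence so is $n$ (by the defining triangle $n\to q^*n'\oplus m\to q^*q_*m$). The triangle identity $\psi = q^*\hat\psi\circ \eta_m$ immediately gives $\psi\circ f = 0$, and exactness at $\Sigma^i k$ follows by lifting any $x\colon \Sigma^i k\to m$ with $\psi\circ x = 0$ through the adjoint exact sequence in $\sfD(S)$ (obtained from the LFP sequence for $X$ via the adjunction) and then applying the universal property of the pullback to combine $x$ with its lift. Arranging this pullback cleanly is the main step; once in place, everything else is formal manipulation with the adjunction.
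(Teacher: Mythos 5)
Your argument is correct, and the final step (Rouquier's representability theorem \cite{Rdim}*{Theorem~4.16} applied to the strongly generated $\sfD^\mathrm{c}(S)$) coincides with the paper's. But the way you establish that $q^*X$ is cohomologically locally finitely presented over $S$ is genuinely different. The paper never touches the adjunction: it uses Lemma~\ref{lem_thick_fp} to see that $R_*$ is finitely presented over $S_*$ (hence coherent), invokes Proposition~\ref{prop_clfp} to translate ``$X$ is LFP over $R$'' into ``$X_*$ is finitely presented over $R_*$'', transports finite presentation along $R_* \to S_*$, and applies Proposition~\ref{prop_clfp} a second time to conclude that $q^*X$ is LFP over $S$. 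This route is short given the machinery already built, and it ties the abstract LFP condition to the concrete statement about homotopy groups, but it leans essentially on coherence of $S_*$. Your route instead transports conditions (a) and (b) directly through $q_*\dashv q^*$, using only that both adjoints preserve small objects ($q_*S\simeq R$ and, by relative g-regularity, $q^*R\in\sfD^\mathrm{c}(S)$); the homotopy pullback along the unit $\eta_m$ is exactly the right device for converting the arrow $n'\to q_*m$ into an arrow into $m$, and the weak universal property of homotopy cartesian squares supplies the lift needed for exactness at $\Sigma^ik$. The payoff is that your proof nowhere uses coherence of $S_*$: it isolates the purely formal statement that a right adjoint whose left adjoint and itself both preserve compacts carries cohomologically LFP objects to cohomologically LFP objects, which is strictly more general than what the paper proves at this step (coherence is still needed elsewhere, e.g.\ for the forward containment). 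The one point worth making explicit in a write-up is that the map out of the homotopy pullback is only weakly universal, but as your defining triangle $n\to q^*n'\oplus m\to q^*q_*m$ indicates, that is all the argument requires.
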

\begin{proof}
Suppose $X\in \sfD^\mathrm{lfp}(R)$ as in the statement. As in the proof of the previous proposition we can use Lemma~\ref{lem_thick_fp} to see that $R_*$ is finitely presented over $S_*$ and so coherence of $S_*$ implies coherence of $R_*$. Thus we can apply Proposition~\ref{prop_clfp} to see that $X_*$ is finitely presented over $R_*$. 

Using again that $R_*$ is finitely presented over $S_*$ this tells us that $X_*$ is a finitely presented $S_*$-module. Given that we have assumed $S_*$ coherent we may then apply Proposition~\ref{prop_clfp} to deduce that $q^*X$ is cohomologically locally finitely presented in $\sfD(S)$. The assumption that $\sfD^\mathrm{c}(S)$ is strongly generated then implies, by virtue of \cite{Rdim}*{Theorem~4.16}, that $q^*X$ is actually small.
\end{proof}

\section{The Six Ring Context}\label{sec:standardcontext}
The starting point of our analysis is a chosen normalization of a
`local ring' $R\lra k$. We show here that this gives rise to two Koszul dual
cofibre sequences of rings, which will provide the framework for our further 
results.

\subsection{The set-up}\label{sec:set-up}
We suppose we are given maps $S \stackrel{q}\lra R \lra k$ of ring
spectra with $k$ a field. We write $Q=R\tensor_Sk$ for the cofibre of $q$. We will assume from here on that $R$ and $k$ are small as $S$-modules. Thus $S$ is g-regular, and $q$ is a normalization of $R$. 

\begin{lem}\label{lem:proxy}
Under the above assumptions, $R$ is proxy-regular, i.e.\ $k$ is proxy-small over $R$, and $Q$ can be taken as a proxy for $k$.
\end{lem}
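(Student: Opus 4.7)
The plan is to verify the three conditions of Definition~\ref{defn:proxysmall} for the candidate proxy $K = Q = R\otimes_S k$.

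First, for $R\finbuilds Q$: since $k$ is small over $S$, we have $S\finbuilds k$ in $\sfD(S)$, i.e.\ $k\in\thick_S(S)$. Apply the triangulated base-change $R\otimes_S(-)\colon \sfD(S)\to \sfD(R)$, which sends $S\mapsto R$ and $k\mapsto Q$. The conclusion $R\finbuilds Q$ is immediate from functoriality.

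Next, for $Q\builds k$: by the tensor-restriction adjunction one computes $\Hom_R^*(Q,k) \cong \Hom_S^*(k,k)$, which is nontrivial in degree zero. Concretely, the augmentation $R\to k$ composed with multiplication $k\otimes_S k\to k$ gives a canonical $R$-linear map $\varepsilon\colon Q\to k$. Iterating cofibres of $\varepsilon$ (or equivalently via a Milnor-type construction) exhibits $k$ as a homotopy colimit of cones built from $Q$, so $k\in \Loc_R(Q)$.

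The main step, and the principal obstacle, is $k\finbuilds Q$. The starting observation is that applying $-\otimes_S k\colon \sfD(S)\to \sfD(k)$ to $S\finbuilds R$ shows $Q$ is small as a $k$-module, so $\pi_*Q$ is bounded with each $\pi_i Q$ finite-dimensional over $k$. In the motivating local setting, where $\pi_0 R$ is a local ring with residue field $k$, the ring $\pi_0 Q$ is then a finite-dimensional local $\pi_0 R$-algebra with residue field $k$, so its augmentation ideal is nilpotent. A Postnikov-tower argument on $Q$ combined with a composition-series filtration on each $\pi_iQ$ therefore displays $Q$ as a finite iterated extension of $k$'s in $\sfD(R)$, giving $Q\in\thick_R(k)$. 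The difficulty here is that the evident $S$-cellular structure of $Q$, produced by resolving $R$ over $S$, does not respect the $R$-action, so one cannot simply transfer the $S$-level building to an $R$-level building; one must instead use the local-at-$k$ nature of $R$ to pass from finite $k$-homotopy to membership in $\thick_R(k)$.
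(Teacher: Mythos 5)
Your argument for $R\finbuilds Q$ coincides with the paper's (apply the base change $R\tensor_S-$ to $S\finbuilds k$), but the other two conditions diverge, and the treatment of $k\finbuilds Q$ has a genuine gap. The paper disposes of that condition in one line by applying $-\tensor_Sk$ directly to the relation $S\finbuilds R$, so that $k=S\tensor_Sk\finbuilds R\tensor_Sk=Q$; it never passes through finiteness of $\pi_*Q$. You instead deduce only that $Q$ has finite-dimensional total homotopy and then invoke a Postnikov tower together with composition series for the homotopy groups. That argument is explicitly confined to ``the motivating local setting'' where $\pi_0R$ is local and $R$ is connective enough for Postnikov sections of $R$-modules to exist and be built from $k$; it says nothing about the coconnective rings $C^*(X;\Q)$ and $C^*(BG;\Fp)$, or spectra such as $ko$ and $tmf$, which are precisely the cases the lemma is needed for. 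Moreover, finite-dimensional homotopy alone can never imply membership in $\thick_R(k)$ (consider $k[x]/(x-1)$ over $k[x]$), so the locality input is essential to your route and appears nowhere in the lemma's hypotheses. In short, you have proved this condition only in a special case, by a method that does not extend, whereas the paper's proof is a one-step functoriality argument.

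Your treatment of $Q\builds k$ is also not a proof as written: the existence of a nonzero $R$-linear map $\varepsilon\colon Q\to k$ does not permit one to ``iterate cofibres'' and arrive at $k$ --- a single nonzero morphism between two objects never by itself implies that one builds the other. What makes the step work, and what the paper actually says, is that $\varepsilon$ is multiplicative: $Q=R\tensor_Sk$ is a ring, $k$ is a $Q$-module via $Q\to k\tensor_Sk\to k$, and every $Q$-module lies in $\Loc_{\sfD(Q)}(Q)$ via its cellular (bar) filtration; hence $Q\builds k$ over $Q$ and, after restriction, over $R$. You have the right map in hand, but you must use its ring structure rather than the single homotopy class in $\Hom_R^*(Q,k)$.
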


\begin{proof}
Since $Q$ is a ring and $k$ is a module over it we have $Q \builds k$. For the other two conditions, we use the fact that both $k$ and
$R$ are small over $S$:
$$\left( S\finbuilds R \right) \Rightarrow \left( k\simeq S\tensor_S k
\finbuilds R\tensor_S k = Q\right)$$
and 
$$\left( S\finbuilds k \right) \Rightarrow \left( R\simeq R\tensor_S S
\finbuilds R\tensor_S k = Q\right).$$
\end{proof}

\subsection{The Koszul dual cofibre sequence}
The Koszul duals of the rings
$$S\stackrel{q}\lra R \stackrel{p}\lra Q$$
are the rings
$$\cF \stackrel{j}\lla \cE\stackrel{i}\lla \mcD$$
where 
$$\cF=\Hom_S(k,k),\; \cE=\Hom_R(k,k), \mbox{ and } \mcD=\Hom_Q(k,k). $$

\begin{lem}
\label{lem:MoritaRsmall}
The sequence
$$\cF \lla \cE\lla \mcD$$
is also  a cofibre sequence. Moreover, $\cF$ has finite dimensional homotopy over $k$ and $\cF$ is small
over $\cE$.
\end{lem}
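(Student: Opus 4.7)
The plan is to prove the three assertions in turn: the finite-dimensionality of $\cF$ and the smallness of $\cF$ over $\cE$ are essentially formal consequences of the smallness data already in hand, whereas the cofibre-sequence assertion---which, by analogy with the loose usage of ``cofibre sequence $S \to R \to Q$'' for $Q = R\otimes_S k$, I interpret as the equivalence $\cF \simeq \cE \otimes_\mcD k$---requires a Koszul-duality argument.

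I would first record the identity $\cF \simeq \Hom_R(Q,k)$ coming from the extension--restriction adjunction for $q$, namely
$$\cF = \Hom_S(k,k) \simeq \Hom_R(R\otimes_S k, k) = \Hom_R(Q,k).$$
Finite-dimensionality of $\cF$ over $k$ then follows from smallness of $k$ over $S$: the contravariant triangulated functor $\Hom_S(-,k)\colon \sfD(S)^{\op} \to \sfD(k)$ carries $k \in \Thick_{\sfD(S)}(S)$ into $\Thick_{\sfD(k)}(k)$, which consists of finite-dimensional graded $k$-vector spaces. Likewise, by Lemma~\ref{lem:proxy} we have $k \finbuilds Q$ in $\sfD(R)$, so the contravariant functor $\Hom_R(-,k)\colon \sfD(R)^{\op} \to \cE\text{-mod}$ (with $\cE$ acting by post-composition) sends $k$ to $\cE$ and $Q$ to $\cF$, placing $\cF$ in $\Thick_{\cE\text{-mod}}(\cE)$ and hence small over $\cE$.

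For the cofibre sequence, the first move is to rewrite $\cE$ using the \emph{right} adjoint of restriction along $R\to Q$, namely the coinduction $\Hom_R(Q,-)$: the adjunction $\Hom_Q(N,\Hom_R(Q,M)) \simeq \Hom_R(N|_R, M)$ applied at $N = M = k$ gives
$$\cE = \Hom_R(k,k) \simeq \Hom_Q(k, \Hom_R(Q,k)) = \Hom_Q(k,\cF).$$
I would then invoke the Koszul-duality adjunction $(-\otimes_\mcD k) \dashv \Hom_Q(k,-)$ between right $\mcD$-modules and $Q$-modules, whose counit $\Hom_Q(k,M) \otimes_\mcD k \to M$ is an equivalence when $M \in \Loc_Q(k)$, by the proxy-small Morita theory of~\cite{DGI2}. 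Applied at $M = \cF$ this yields the desired $\cE \otimes_\mcD k \simeq \cF$, provided $\cF$ lies in $\Loc_Q(k)$.

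The main obstacle is thus verifying $\cF \in \Loc_Q(k)$, which I would handle by proving the stronger statement $\Loc_Q(k) = Q\text{-mod}$. Since $R$ and $k$ are both small over $S$, restricting $Q = R\otimes_S k$ along $S\to k$ places $Q$ in $\Thick_{\sfD(k)}(k)$, so $\pi_*Q$ is finite-dimensional and concentrated in finitely many degrees. The resulting finite Postnikov tower of $Q$ in $\sfD(Q)$ has Eilenberg--Mac Lane slices $\Sigma^i H\pi_i Q$, each a finite sum of shifts of $k$ (viewed as a $Q$-module via the augmentation $Q\to k$) and hence in $\Thick_Q(k)$. Therefore $Q \in \Thick_Q(k) \subseteq \Loc_Q(k)$, and since $Q$ compactly generates $Q\text{-mod}$ we conclude $\Loc_Q(k) = Q\text{-mod}$, closing the argument.
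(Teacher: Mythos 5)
Your arguments for the second and third assertions are the paper's own: finite dimensionality of $\cF$ because the exact contravariant functor $\Hom_S(-,k)$ carries $k\in\Thick_S(S)$ into $\Thick_k(k)$, and smallness of $\cF$ over $\cE$ by applying $\Hom_R(-,k)$ to the relation $k\finbuilds Q$ from Lemma~\ref{lem:proxy}. For the cofibre sequence you also open exactly as the paper does, rewriting $\cE\simeq\Hom_Q(k,\Hom_R(Q,k))=\Hom_Q(k,\cF)$, but then the routes split. The paper stays pointwise: it writes $k=\Hom_Q(Q,k)$ and uses the composition map $\Hom_Q(k,\Hom_R(Q,k))\otimes_{\mcD}\Hom_Q(L,k)\to\Hom_R(L,k)$, an equivalence for $L=k$ and hence for all $L\in\Thick_Q(k)$, evaluated at $L=Q$. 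You instead apply the counit of $(-\otimes_{\mcD}k)\dashv\Hom_Q(k,-)$ at $M=\cF$, which forces you to place $\cF$ in $\Loc_Q(k)$, and you do so by proving the much stronger statement $\Loc_Q(k)=\sfD(Q)$. Both routes rest on the single input $Q\in\Thick_Q(k)$ (the paper reads this off from Lemma~\ref{lem:proxy}; you need it both to launch the localizing-subcategory argument and, implicitly, to know that $k$ is proxy-small over $Q$ so that the cellularization statement you quote from \cite{DGI2} is available --- $Q$ itself then serves as the proxy).

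The genuine gap is in your Postnikov justification of $Q\in\Thick_Q(k)$. The layers of the Postnikov tower are Eilenberg--Mac Lane modules $\Sigma^iH\pi_iQ$ on the $\pi_0Q$-modules $\pi_iQ$; these are finite dimensional over $k$, but they are not in general finite sums of shifts of the augmentation module, because $\pi_0Q$ need not act on $\pi_iQ$ through the augmentation. For each layer to lie in $\Thick_Q(k)$ one needs every finite-dimensional $\pi_0Q$-module to be finitely built from $k$, which essentially requires $\pi_0Q$ to be local with residue field $k$. If, say, $\pi_*Q$ were $k\times k$ concentrated in degree zero with augmentation onto one factor, your argument would conclude $Q\in\Thick_Q(k)$, which is false there ($\Thick_Q(k)$ is then the subcategory of modules supported on the augmentation factor). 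The repair is to obtain $k\finbuilds Q$ the way Lemma~\ref{lem:proxy} does, by applying $-\otimes_Sk$ to the relation $S\finbuilds R$, rather than by filtering $Q$ by its homotopy groups.
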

\begin{rem}
Topologists may think of the example arising from a fibration
$$Y\lla X\lla F$$
with $S=C^*(Y)$, $R=C^*(X)$, so that provided the Eilenberg-Moore
spectral sequence converges (e.g. if  $Y$ is 1-connected \cite[Theorem
7.1]{McCleary}), $Q=R\tensor_S k\simeq C^*(F)$. We see that the condition that $R$
is small over $S$ is equivalent to the condition that $H^*(F)$ is finite dimensional.

Continuing the fibre sequence we obtain
$$Y\lla X\lla F\lla \Omega Y\lla \Omega X \lla \Omega F. $$
Provided the Eilenberg-Moore spectral sequences converge, we find $\cF\simeq C_*(\Omega Y)$, $\cE \simeq C_*(\Omega X)$ and $\mcD \simeq C_*(\Omega
F)$.  Again, the condition that $S$ is g-regular is the condition that $H_*(\Omega
Y)$ is finite dimensional, and the condition that $\cF$ is small over $\cE$ is that
$H_*(F)$ is finite dimensional. 
\end{rem}

\begin{proof}[Proof of Lemma~\ref{lem:MoritaRsmall}.]
First we show that $\mcD \lra \mcE \lra \mcF$ is a cofibre sequence,
which is to say that 
$$\mcF \simeq \mcE\tensor_{\mcD} k. $$
Expanding the definition of the right hand side 
\begin{align*}
\mcE\tensor_{\mcD} k &=\HomR (k,k)\tensor_{\HomQ(k,k)} k \\
&\simeq \HomQ (k,\HomR(Q,k))\tensor_{\HomQ(k,k)} \HomQ(Q,k).
\end{align*}
In general, for a $Q$-module $L$, 
 composition gives a map 
$$\HomQ (k, \HomR (Q,k))\tensor_{\HomQ(k,k)} \HomQ(L,k)\lra
\HomQ(L,\HomR(Q,k)),$$
where the target can be identified with $\HomR(L,k)$ by adjunction. 
This map is obviously an equivalence when $L=k$, and hence for any
$Q$-module $L$ (such as $Q$) finitely built from $k$. Taking $L=Q$,
 we have
\begin{multline*}
\HomQ (k,\HomR(Q,k))\tensor_{\HomQ(k,k)} \HomQ(Q,k)\simeq \HomR (Q,
k)\\
 =\HomR(R\tensor_S k,k)\simeq \HomS(k,k)=\cF
\end{multline*}
as required.

By definition, g-regularity of $S$ means that $\cF = \Hom_S(k,k)$ is finite
dimensional. Finally, we  show that $\mcF $ is small over $\mcE$.
 Clearly
$$\left( S\finbuilds R\right) \Rightarrow \left( k\simeq S\tensor_S k\finbuilds
R\tensor_Sk\right). $$
Applying $\Hom_R(\cdot , k)$ we find
$$\cE =\Hom_R(k, k )\finbuilds \Hom_R(R\tensor_Sk , k)\simeq \Hom_S(k,k)=\cF.$$
\end{proof}

\subsection{Another criterion for g-regularity}
The following application of Thomason's Localization Theorem is
straightforward but amusing. The version of the Localization Theorem
to which we appeal is due to Neeman \cite{NeemanLoc}, but the most
convenient version for our purposes is  \cite[Theorem 2.1]{NeeGrot}.

\begin{lem}
\label{lem:Thomason}
If $\mcF \vdash \mcE$ then the completion of $R$, $\Hom_{\cE}(k,k)$, is g-regular. 
\end{lem}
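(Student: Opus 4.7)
The hypothesis $\mcF \vdash \mcE$, together with the smallness of $\mcF$ over $\mcE$ established in Lemma~\ref{lem:MoritaRsmall}, makes $\mcF$ a compact generator of $\sfD(\mcE)$. Thomason's localization theorem then identifies the thick subcategory of small $\mcE$-modules with $\thick_\mcE(\mcF)$, so in particular $\mcE \in \thick_\mcE(\mcF)$.

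Lemma~\ref{lem:MoritaRsmall} also tells us $\mcF$ has finite dimensional total homotopy over $k$. Using the augmentation $\mcE \to k$ and filtering by the Postnikov (or Whitehead) tower, any bounded $\mcE$-module whose total $k$-homotopy is finite dimensional lies in $\thick_\mcE(k)$: each filtration layer has homotopy concentrated in a single degree as a finite dimensional $k$-vector space, hence decomposes as a finite direct sum of shifted copies of $k$. Therefore $\mcF \in \thick_\mcE(k)$, and combined with the previous step this yields $\mcE \in \thick_\mcE(k)$.

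Setting $A = \Hom_\mcE(k,k)$, the contravariant exact functor $\Hom_\mcE(-,k) \colon \sfD(\mcE)^{\mathrm{op}} \to \sfD(A)$ preserves thick subcategories and sends $k$ to the free rank-one $A$-module $A$ and $\mcE$ to $k$. Applied to the containment $\mcE \in \thick_\mcE(k)$, it delivers
$$k \in \thick_A(A),$$
which is precisely the assertion that $k$ is small over $A$, i.e., that the completion $A = \Hom_\mcE(k,k)$ of $R$ is g-regular.

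The main subtle point is the step $\mcF \in \thick_\mcE(k)$: the Postnikov/Whitehead decomposition needs $\mcE$ to have enough boundedness or connectivity for such truncations to exist as $\mcE$-modules. In the motivating examples (for instance $\mcE \simeq C_*(\Omega X)$) this is automatic, and in the abstract setting it follows from the augmentation to $k$ together with the finite-dimensional $k$-homotopy of $\mcF$.
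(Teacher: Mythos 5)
Your proof is correct and follows essentially the same route as the paper: Thomason's localization upgrades $\mcF\vdash\mcE$ to $\mcF\finbuilds\mcE$, finite dimensionality of $\mcF$ gives $k\finbuilds\mcF\finbuilds\mcE$, and applying $\Hom_\mcE(-,k)$ converts $k\finbuilds\mcE$ into g-regularity of $\Hom_\mcE(k,k)$. The only difference is that you spell out (via the Postnikov filtration) the step $k\finbuilds\mcF$, which the paper simply asserts from finite dimensionality; your caveat there is apt, and the argument does go through because the homotopy of $\mcF$ consists of $k$-vector spaces with the $\pi_0\mcE$-action factoring through the augmentation.
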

\begin{proof}
We suppose that $\mcF \vdash \mcE$. By Lemma \ref{lem:MoritaRsmall} $\mcF$ is small over $\mcE$ so we deduce, via Thomason's Localization Theorem, that in fact $\mcF \vDash \mcE$. Since $S$ is g-regular we know $\mcF$ is finite dimensional from which we conclude
\begin{displaymath}
k \vDash \mcF \vDash \mcE.
\end{displaymath}
Hence $\mcE$ is also finite dimensional, and applying
$\Hom_{\cE}(\cdot, k)$ to $k\finbuilds \cE$ we see the completion of
$R$  is g-regular.
\end{proof}

This style of argument will appear again in Proposition~\ref{prop:invariance} and the results following it where we deduce another general invariance statement for our notion of the bounded derived category.

\section{The Symmetric Gorenstein Context}\label{sec:SGC}

We continue with the notation and hypotheses of Section~\ref{sec:set-up}. From the normalization $S\lra R $ we have produced cofibre sequences
$$S\stackrel{q}\lra R\stackrel{p} \lra Q \quad \text{and} \quad \mcF \stackrel{j}\lla \mcE \stackrel{i}\lla \mcD,$$
the latter being the Koszul dual of the former.

Concentrating on $R$, there is a functor $E$ from right $R$-modules to right
$\cE$-modules given by
$$EM=\Hom_R(k, M).$$ There are similar comparison functors relating modules over $S$ and $Q$ to $\mcF$ and $\mcD$ respectively.
To complete our comparison, we
need to be able to
return  from the second cofibre sequence to the first. Accordingly, we need a
suitable {\em right} $\cE$-module structure on $k$, and we will
therefore assume the Gorenstein condition at various points. We show that
this rather elaborate structure occurs remarkably often and leads to a
rich network of related functors.

\subsection{Gorenstein}\label{ssec:Gorenstein}
The usual definition of a commutative Gorenstein local ring $(R, \fm , k)$ is that
$R$ is of finite injective dimension as  a module over itself, but one
then proves that this is equivalent to saying $\Ext^*_R(k,R)$ is one dimensional over
$k$. It is the latter condition that we use to extend the definition
to our context \cite{DGI2}.

\begin{defn}\label{defn:gor}
A map $R\lra k$ is said to be {\em Gorenstein of shift $a_R$} if there is some weak equivalence 
$\HomR(k,R)\simeq \Sigma^{a_R}k$ of $R$-$R$-bimodules. 

More generally, a map $q\colon S\lra R$ is said to be
{\em relatively Gorenstein of shift $a_q $} if $\HomS(R,S)\simeq
\Sigma^{a_q}R$ as $S$-$S$-bimodules.
\end{defn}

\subsection{The condition}
The basic structure behind our results may be summarized as follows.

\begin{defn}\label{defn:sgc}
We say that a cofibre sequence $S\stackrel{q}\lra R\stackrel{p}\lra Q$ and its Koszul dual
$\cF \stackrel{j}\lla \cE\stackrel{i}\lla \mcD$ form a {\em Symmetric Gorenstein Context} if 
\begin{itemize}
\item all six ring spectra  are Gorenstein;
\item all four maps $p, q, i$, and $j$ are relatively Gorenstein;
\item the two  rings $S$ and $\mcD$ are g-regular;
\item all four maps $p, q, i$, and $j$ are relatively g-regular (see Section~\ref{sec:regularity}).
\end{itemize}
Informally, we may say it is $6+4$ Gorenstein and $2+4$ g-regular. 
\end{defn}

\subsection{From normalization to the Symmetric Gorenstein Context}
The number of conditions in the definition of a Symmetric Gorenstein
Context looks daunting. However, we show that the whole structure
can be deduced from appropriate conditions on the original normalization $q\colon S
\lra R$. 

\begin{defn}\label{defn:sgn}
A map of augmented ring spectra $q\colon S \lra R$ is a \emph{strongly Gorenstein normalization} if
\begin{itemize}
\item $S$ is Gorenstein and  $S\lra R$ is relatively Gorenstein and
\item $S$ is g-regular and $S\lra R$ is relatively g-regular 
\end{itemize}
\end{defn}

\begin{prop}
\label{prop:halfsymmetric}
Suppose that $q\colon S\lra R$ is a strongly Gorenstein normalization. Then $S\stackrel{q}{\lra} R\stackrel{p}{\lra} Q$ has all the properties required of it in a
Symmetric Gorenstein Context.
\end{prop}

\begin{rem}
Informally $1+1$ Gorenstein and $1+1$ g-regular implies $3+2$ Gorenstein
and $1+2 $ g-regular. 
\end{rem}

We will repeatedly use the observation that one has Gorenstein ascent and descent along
relatively Gorenstein maps. 

\begin{lem}
\label{lem:relGorascdesc}
If  $f\colon B\lra A$ is relatively Gorenstein
then $A$ is Gorenstein if and only if $B$ is Gorenstein, and if these
hold then $a_A+a_f=a_B$.
\end{lem}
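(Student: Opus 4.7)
The plan is to reduce the entire lemma to a single adjunction identity. The key tool is the standard base-change/Hom-tensor adjunction: for any $A$-module $M$,
\begin{displaymath}
\Hom_B(f^*M, B) \simeq \Hom_A(M, \Hom_B(A,B)),
\end{displaymath}
where $\Hom_B(A,B)$ carries its natural $A$-module structure arising from viewing $A$ as a $B$-$A$-bimodule. I would apply this with $M = k$, viewed as an $A$-module via the augmentation $A \to k$, so that $f^*k$ is simply $k$ considered as a $B$-module via $B \to A \to k$. This yields
\begin{displaymath}
\Hom_B(k, B) \simeq \Hom_A(k, \Hom_B(A,B)).
\end{displaymath}

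Substituting the relative Gorenstein hypothesis $\Hom_B(A,B) \simeq \Sigma^{a_f} A$ (as $A$-modules) into the right-hand side gives the master equivalence
\begin{displaymath}
\Hom_B(k, B) \simeq \Sigma^{a_f}\Hom_A(k, A).
\end{displaymath}
From this both directions of the ``iff'' and the shift formula follow transparently. If $A$ is Gorenstein with $\Hom_A(k,A) \simeq \Sigma^{a_A}k$, the right-hand side becomes $\Sigma^{a_A + a_f} k$, so $B$ is Gorenstein with $a_B = a_A + a_f$. Conversely, if $B$ is Gorenstein with $\Hom_B(k,B) \simeq \Sigma^{a_B}k$, then $\Sigma^{a_f}\Hom_A(k,A) \simeq \Sigma^{a_B} k$, and desuspending gives $\Hom_A(k,A) \simeq \Sigma^{a_B - a_f} k$, so $A$ is Gorenstein with $a_A = a_B - a_f$. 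In either case the claimed identity $a_A + a_f = a_B$ holds.

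The one point that requires genuine care, and which I would flag as the main subtlety, is tracking module structures: one must ensure that the equivalence $\Hom_B(A,B) \simeq \Sigma^{a_f}A$ supplied by the definition of ``relatively Gorenstein'' is indeed an equivalence of $A$-modules (with the natural right $A$-action), since it is this $A$-module identification that is fed into the Hom-adjunction and ultimately allows a shift on $A$ to be converted into a shift on $k$. This is implicit in the excerpt's definition. Once that is granted, no further ingredients are required and the proof is symmetric in the two implications.
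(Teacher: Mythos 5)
Your proof is correct and is essentially the paper's own argument: both rest on the single chain $\Hom_B(k,B)\simeq \Hom_A(k,\Hom_B(A,B))\simeq \Sigma^{a_f}\Hom_A(k,A)$, combining the Hom-tensor adjunction with the relative Gorenstein identification of $\Hom_B(A,B)$. Your remark about checking the $A$-module structure on that identification is a reasonable point of care, but otherwise there is nothing to add.
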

\begin{proof}
We have the equivalences 
$$\Hom_A(k,A)\simeq \Hom_A(k, \Sigma^{-a_f}\Hom_B(A,B))\simeq
\Sigma^{-a_f}\Hom_B(k,B). $$
\end{proof}

\begin{proof}[Proof of Proposition \ref{prop:halfsymmetric}.]
The required regularity statements are that $S$ is g-regular and the maps $q$
and $p$ are relatively g-regular. The first two are hypotheses. For the
third,  since  $k$ is $S$-small  $Q=R\otimes_S k$ is $R$-small.

The required Gorenstein statements are  that $S, R$ and $Q$ are
Gorenstein, and that  $q$ and $p$ are relatively Gorenstein. 
Since $q$ is relatively Gorenstein, the fact that $R$ is Gorenstein follows by ascent from
the fact  $S$ is Gorenstein. 

For $p$ we make the computation
\begin{align*}
\Hom_R(Q,R) &\simeq \Hom_R(R\otimes_S k, R) \\
&\simeq \Hom_S(k, R) \\
&\simeq R \otimes_S \Hom_S(k,S)\\
&\simeq R\otimes_S \S^{a_S}k\\
&\simeq \S^{a_S}Q
\end{align*}
where the third isomorphism uses that $k$ is small over $S$ and the fourth that $S$ is Gorenstein (of shift $a_S$). That $Q$ is Gorenstein then follows by ascent from the fact that $R$ is Gorenstein. 
\end{proof}

Let us now consider the corresponding conditions on $\mcD, \mcE$ and
$\mcF$. We make the additional assumption that at least one of $\mcF, \mcE$ or $\mcD$ 
is Gorenstein.

\begin{prop}
\label{prop:fullsymmetric}
Suppose $S\lra R$ is a strongly Gorenstein normalization and that in 
addition at least one of $\mcD, \cE$ or $\mcF$ is Gorenstein, then we have a Symmetric Gorenstein Context. 
\end{prop}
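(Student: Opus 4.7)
The plan is to apply Proposition \ref{prop:halfsymmetric} twice: first to the given normalization $q\colon S\lra R$ to obtain the $S$-side of the Symmetric Gorenstein Context, and then to the Koszul dual $i\colon\mcD\lra\mcE$ to obtain the $\mcD$-side. The first application immediately gives that $S,R,Q$ are Gorenstein with appropriate shifts, that $q$ and $p$ are relatively Gorenstein, that $S$ is g-regular, and that $q,p$ are relatively g-regular. Lemma \ref{lem:MoritaRsmall} gives relative g-regularity of $j\colon\mcE\lra\mcF$. The bulk of the work is to verify that $i\colon \mcD\lra\mcE$ is itself a strongly Gorenstein normalization, namely: (a) $\mcE$ is small over $\mcD$; (b) $k$ is small over $\mcD$; (c) $\mcD$ is Gorenstein; (d) $i$ is relatively Gorenstein. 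Once these are in hand, a second application of Proposition \ref{prop:halfsymmetric} supplies Gorensteinness of $\mcE$ and $\mcF$, relative Gorensteinness of $j$, and relative g-regularity of $i$, completing the Symmetric Gorenstein Context.

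The main tool for (a), (b), and (d) is the derived Morita adjunction for the $(Q,\mcD)$-bimodule $k$,
\begin{displaymath}
\Hom_\mcD(N,\Hom_Q(k,M))\simeq \Hom_Q(N\otimes^L_\mcD k,M),
\end{displaymath}
combined with the cofibre identity $\mcF\simeq\mcE\otimes^L_\mcD k$ from Lemma \ref{lem:MoritaRsmall} and the Gorenstein identification $\Hom_Q(k,Q)\simeq\Sigma^{a_Q}k$ from Proposition \ref{prop:halfsymmetric}. Specialising to $N=\mcE$ and $M=k$ gives $\Hom_\mcD(\mcE,\mcD)\simeq\Hom_Q(\mcF,k)$; using $\mcF\simeq\Hom_R(Q,k)$ (from the proof of Lemma \ref{lem:MoritaRsmall}) together with the Gorenstein properties of $R$ and $Q$ identifies the right-hand side with an appropriate shift of $\mcE$, giving (d). A parallel computation with the $(R,\mcE)$-Morita adjunction shows $j$ is relatively Gorenstein. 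For (a) and (b), the Gorenstein identification realises shifts of $k$ (as a left $\mcD$-module) as images under $\Hom_Q(k,-)$ of small $Q$-modules, and relative g-regularity of $p$ together with the description of $\mcE$ as an image of $k$ under $\Hom_R(k,-)$ allows one to conclude smallness over $\mcD$. Given (d) and its $j$-analog, the hypothesis that at least one of $\mcD,\mcE,\mcF$ is Gorenstein propagates through all three by Gorenstein ascent and descent along $i$ and $j$ (Lemma \ref{lem:relGorascdesc}), yielding (c).

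The main obstacle is the transfer of smallness and of the Gorenstein shift across the Morita adjunction in (a), (b), and (d): smallness is not in general preserved by $\Hom_Q(k,-)$, and it is precisely the Gorenstein identification on the $S$-side that makes the adjunction well-behaved enough on the relevant objects to carry the structure across. The extra hypothesis that one of $\mcD,\mcE,\mcF$ is Gorenstein is what closes the loop in the non-complete setting, replacing the double-dualization isomorphism $\Hom_\mcE(k,k)\simeq R$ that would be automatic if $R$ and $S$ were complete.
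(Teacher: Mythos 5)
Your proposal is correct and follows essentially the same route as the paper: in both cases the substance is (i) the relative Gorensteinness of $j$ and $i$ established by exactly the Morita/adjunction computation you describe (the paper's Lemmas \ref{lem:jrelGor} and \ref{lem:irelGor}, phrased there via full faithfulness of $E$ and $D$ on $\thick(k)$ rather than via the tensor--hom adjunction), (ii) the smallness of $k$ and $\mcE$ over $\mcD$, and (iii) Gorenstein ascent/descent along $i$ and $j$ (Lemma \ref{lem:relGorascdesc}) to propagate the single assumed Gorenstein condition to all of $\mcD,\mcE,\mcF$; your repackaging of the endgame as a second application of Proposition \ref{prop:halfsymmetric} to $i\colon \mcD\lra\mcE$ (legitimate once Lemma \ref{lem:MoritaRsmall} identifies the cofibre of $i$ with $\mcF$) is only an organizational difference from the paper, which verifies the dual-side properties directly. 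One small correction to your sketch of (a) and (b): what makes $\Hom_Q(k,-)$ carry the relevant objects to small $\mcD$-modules is not the Gorenstein identification nor smallness over $Q$ (the functor sends $\thick_Q(Q)$ into $\thick_{\mcD}(k)$, which would be circular), but rather that $Q$ and $Q\tensor_Rk$ lie in $\thick_Q(k)$ --- coming from $S\finbuilds R$ and $R\finbuilds Q$ respectively --- since $\Hom_Q(k,-)$ (equivalently $\Hom_Q(-,k)$, as the paper uses) sends $\thick_Q(k)$ into $\thick_{\mcD}(\mcD)$; the Gorenstein identification only serves to name the resulting images.
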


\begin{rem}
Informally $1+1+1'$ Gorenstein and $1+1$ g-regular implies $6+4$ Gorenstein 
and $2+4 $ g-regular. 
\end{rem}

The additional assumption is often automatic: if $R$ is a $k$-algebra, proxy-regular and complete (see Section~\ref{sec:completion}) then $\cE$ is Gorenstein 
\cite[8.5]{DGI2}. 

\begin{cor} 
\label{rem:MoritaGorenstein}
Suppose $S\lra R$ is a strongly Gorenstein normalization and that $R$
is a $k$-algebra, proxy-regular and complete. Then we have a Symmetric Gorenstein Context. 
\end{cor}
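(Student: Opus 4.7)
The plan is to deduce this as a direct consequence of Proposition~\ref{prop:fullsymmetric}, whose hypotheses require a strongly Gorenstein normalization together with at least one of $\mcD$, $\cE$ or $\mcF$ being Gorenstein. The first assumption is handed to us, so the only real task is to verify the second under the stated hypotheses on $R$.

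To verify that $\cE$ is Gorenstein, I would invoke the cited result \cite{DGI2}*{8.5}, which asserts precisely that whenever $R$ is a $k$-algebra which is proxy-regular and complete, the derived endomorphism ring $\cE = \Hom_R(k,k)$ is Gorenstein. All three of these hypotheses ($k$-algebra structure, proxy-regularity, completeness) are in force: the $k$-algebra and completeness conditions are assumed, and proxy-regularity of $R$ follows from the existence of the normalization $S\to R$ via Lemma~\ref{lem:proxy} (with $Q$ acting as the required proxy for $k$). Hence $\cE$ is Gorenstein.

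Having secured that one of $\mcD, \cE, \mcF$ is Gorenstein, the hypotheses of Proposition~\ref{prop:fullsymmetric} are satisfied, and that proposition upgrades the data to a full Symmetric Gorenstein Context, giving the $6{+}4$ Gorenstein and $2{+}4$ g-regular structure. There is no real obstacle: all the substantive work has already been carried out in Proposition~\ref{prop:halfsymmetric}, Proposition~\ref{prop:fullsymmetric} and the external input from \cite{DGI2}; the corollary is a packaging statement that combines these into a single convenient hypothesis-set.
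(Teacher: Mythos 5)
Your proposal matches the paper's own argument exactly: the corollary is obtained by citing \cite{DGI2}*{8.5} to conclude that $\cE$ is Gorenstein under the $k$-algebra, proxy-regularity and completeness hypotheses, and then feeding this into Proposition~\ref{prop:fullsymmetric}. Your additional observation that proxy-regularity already follows from the normalization via Lemma~\ref{lem:proxy} is correct and consistent with the paper.
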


\begin{rem}
Informally $1+1$ Gorenstein and $1+1$ g-regular implies $6+4$ Gorenstein 
and $2+4 $ g-regular. 
\end{rem}

\begin{proof}[Proof of Proposition \ref{prop:fullsymmetric}.]
We saw in Proposition \ref{prop:halfsymmetric} that $S\stackrel{q}{\lra} R\stackrel{p}{\lra} Q$ has all the properties
required, so we consider the properties of $\mcD\stackrel{i}{\lra} \mcE\stackrel{j}{\lra} \mcF$. 

We begin with the regularity properties. We showed in Lemma
\ref{lem:MoritaRsmall} that  $\mcF$ is small over $\mcE$. 
It is easy to see that as  $\mcD$-modules $\mcE$  and $k$ are small: for $k$ we note that $k\finbuilds Q$ and apply $\HomQ(\cdot , k)$. For $\mcE$, we note that  Proposition \ref{prop:halfsymmetric} proves
$R\finbuilds Q$. Applying $-\otimes_R k$ we see
\begin{displaymath}
k \finbuilds Q\otimes_R k.
\end{displaymath}
An application of $\Hom_Q(-,k)$ then yields
\begin{displaymath}
\mcD = \Hom_Q(k,k) \vDash \Hom_Q(Q\otimes_R k, k) \simeq \mcE.
\end{displaymath}

Finally,  we turn to the Gorenstein properties. 
Since we are assuming that at least one of $\mcD, \mcE$  or $\mcF$ is
Gorenstein, in view of Lemma \ref{lem:relGorascdesc} 
it suffices to show that $i$ and $j$ are relatively
Gorenstein. This is the content of Lemmas~\ref{lem:jrelGor} and \ref{lem:irelGor}.

\begin{lem}
\label{lem:jrelGor}
Suppose that $q\colon S\lra R$ is a strongly Gorenstein normalization. Then the dual map $\cE\lra \cF$ is relatively Gorenstein of shift $-a_q$ i.e.\
$$\Sigma^{a_q}\Hom_{\cE}(\cF, \cE) \simeq \cF.$$
\end{lem}

\begin{proof} 

Let us write $E$ for the functor to $\mcE$-modules defined by
$$EM=\Hom_R(k, M).$$
We first observe that $\cF\simeq \Sigma^{a_q}E(R\tensor_S k)$.  
Indeed, 
$$\cF =\Hom_S(k, k )\simeq \Hom_R(k, \Hom_S(R,k))=E(\Hom_S(R,k)). $$ 
Since $R$ is small over $S$, 
$$\Hom_S(R,k)\simeq \Hom_S(R, S)\tensor_S k\simeq
\Sigma^{a_q}R\tensor_Sk. $$
 
Now note that the map 
$$E\colon \Hom_R(T, k) \lra \Hom_{\cE}(ET, Ek)$$ 
is an equivalence for $T=k$ and hence if $T$ is finitely built from
$k$. In particular,  since $R$ is small over $S$, it applies to
$T=R\tensor_Sk$ to give
$$\Hom_S(k,k)\simeq
\Hom_R(R\tensor_Sk, k)\simeq \Hom_{\cE}(E(R\tensor_Sk), Ek)=\Sigma^{a_q}\Hom_{\cE}(\cF,\cE),$$
i.e.\ we have demonstrated the Gorenstein condition $\Sigma^{-a_q}\cF= \Hom_{\cE}(\cF,\cE)$. 
\end{proof}

The proof for $i$ is rather similar. 
%{\bf [[Perhaps it can be cut down or omitted]]}
\begin{lem}
\label{lem:irelGor}
Suppose that $q\colon S\lra R$ is a strongly Gorenstein normalization, so in particular the map $p\colon R\lra Q$ is also relatively Gorenstein, of shift $a_p$. The map $i\colon \mcD \to \mcE$, which is dual to $p$, is relatively Gorenstein of shift $-a_p$.
\end{lem}

\begin{proof}
First observe that since $Q$ is $R$-small we have
\begin{displaymath}
\Hom_R(Q,k) \simeq \Hom_R(Q,R)\otimes_R k \simeq \S^{a_p}Q \otimes_R k.
\end{displaymath}
Thus, writing $D$ for the functor to $\mcD$-modules defined by $DL=\Hom_Q(k,L)$, we can find
\begin{displaymath}
\mcE = \Hom_R(k,k) \simeq \Hom_Q(k, \Hom_R(Q,k)) = D(\Hom_R(Q,k)) \simeq D(\S^{a_p}Q\otimes_R k).
\end{displaymath}
Next we observe that, for $T\in \Modu Q$, the map
\begin{displaymath}
D\colon \Hom_Q(T,k) \to \Hom_\mcD(DT, Dk)
\end{displaymath}
is an equivalence for $T=k$ and hence is an equivalence for any $T$
finitely built from $k$. Since $R\vDash Q$ we see
\begin{displaymath}
k \simeq  R\otimes_R k \vDash Q\otimes_R k
\end{displaymath}
so this includes $T=Q\tensor_Rk$. We may therefore calculate
\begin{align*}
\mcE &= \Hom_R(k,k) \\
&\simeq \Hom_Q(Q\otimes_R k, k) \\
&\simeq  \Hom_\mcD(D(Q\otimes_R k), Dk) \\
&\simeq  \Hom_\mcD(\Sigma^{-a_p}\mcE, \mcD) \\
&\simeq \S^{a_p}\Hom_\mcD(\mcE, \mcD).
\end{align*}
\end{proof}
This completes the proof that we have a Symmetric Gorenstein Context. 
\end{proof}

\subsection{Examples from commutative algebra}
We could take $R$ to be a commutative Noetherian complete local $k$-algebra with residue field $k$, cf.\ Example~\ref{ex:a2}. Inside of $R$ we can find a power series ring $S$, with $R$ a finitely generated module over $S$. The ring $S$ is regular and so $R$ is small over $S$. Accordingly we have $1+1$ g-regularity, and $S$ is Gorenstein since it is an honest commutative regular ring. Finally, we must assume in addition that $S\lra R$
is relatively Gorenstein.  

In fact, it is enough to assume the cofibre $Q$ is Gorenstein. Indeed, if this is the case then $R$ is
  Gorenstein by Gorenstein ascent (as in \cite{AFHdescent}*{Theorem~4.3.2}, see also \cite{DGI2}*{Proposition~8.6}). It then follows
from the Auslander-Buchsbaum formula, together with the fact that $R$ is Cohen-Macaulay, that $R$ is free as an $S$-module. It is an immediate consequence that $S\lra R$ is relatively Gorenstein. This shows that $S\lra R$ being relatively Gorenstein is equivalent to $Q$
being Gorenstein as claimed. Since $R$ and $S$ are complete they are also complete in the sense defined in \ref{ssec:completion} by
\cite[4.20]{DGI2} and hence Corollary~\ref{rem:MoritaGorenstein} applies to show we have Symmetric Gorenstein Context.

\subsection{Examples from Koszul duality}\label{ex:Koszul}
We could take for $R = \Lambda$ a Gorenstein Koszul algebra of finite global dimension viewed as a formal DGA. Since $\Lambda$ is already regular we can also take $S = \Lambda$ and then the cofibre $Q$ is simply $k$. Clearly the identity map is relatively Gorenstein and so either by Proposition~\ref{prop:fullsymmetric} or inspection we get a Symmetric Gorenstein Context consisting of cofibre sequences
\begin{displaymath}
\Lambda \stackrel{1}{\lra} \Lambda \lra k
\end{displaymath}
and
\begin{displaymath}
\Lambda^! \stackrel{1}{\lla} \Lambda^! \lla k
\end{displaymath}
where $\Lambda^!$ is the Koszul dual viewed as a formal DGA, and $k$ is a normalization of $\Lambda^!$ by virtue of the latter being finite dimensional.

\begin{comment}
\subsection{Examples from Ginzburg dg-algebras}
Returning to Example~\ref{ex:Ginzburg} we are in a similar
situation. Given a quiver $Q$ with potential $w$ (as in
\ref{ex:Ginzburg}) and letting $l = kQ/\rad(kQ)$ we have a 
 Symmetric Gorenstein Context (permitting the slight  generalisation
 that $l$ is not a field)
\begin{displaymath}
\Gamma(Q,w) \stackrel{1}{\lra} \Gamma(Q,w) \lra l
\end{displaymath}
and
\begin{displaymath}
\cE \stackrel{1}{\lla} \cE \lla l
\end{displaymath}
where, as in the Koszul duality example, $\Gamma(Q,w)$ is regular and so its Morita partner $\cE$ has finite dimensional homotopy.
\end{comment}

\subsection{Examples from rational homotopy theory}
As in Example \ref{ex:dt} we take $R=C^*(X;\Q)$ and $k=\Q$. If we
suppose that $H^*(X; \Q)$ is Noetherian we may choose a polynomial
subring on even generators over which it is a finitely generated module. Take 
$B$ to be the corresponding product of even Eilenberg-MacLane spaces 
and $X\lra B$ realizing the inclusion of this polynomial subring, with
fibre $F$. We then set $S=C^*(B)$ and can identify the cofibre $Q$ with $C^*(F)$, which has finite homology.
This gives $1+1$ g-regular, and that $S$ is Gorenstein. We also see
that $C^*(X)$ and $C^*(B)$ are complete since $X$ and $B$ are simply
connected.  

To obtain a Symmetric Gorenstein Context we may now assume any one of
the three equivalent conditions (i) $X$ is Gorenstein, (ii) $F$ is Gorenstein or
(iii) $S\lra R$ is relatively Gorenstein.

To see they are equivalent note that (i) and (ii) are equivalent by  \cite{DGI}*{8.6}.
We have already noted that (iii) implies (i) in
Lemma~\ref{lem:relGorascdesc}. It remains to show that (i) implies
(iii). This follows from local duality as in
\cite[19.5]{JohnNotes}. Indeed, $C^*(B)$ is formal, so $C^*(B)\simeq
P$ where $P=k[x_1, x_2, \ldots, x_r]$ with $x_i$ of degree
$d_i<0$, and we may let $\fm=(x_1, x_2, \ldots, x_r)$ denote the
maximal ideal and the Gorenstein shift is $a_B=-d-r$ where
$d=d_1+\cdots +d_r$. Accordingly local duality for any small
$P$-module $M$ states that there is an equivalence
$$\Hom (M,P)\simeq \Sigma^{d+r}(\Gamma_{\fm}M)^{\vee},  $$
where $\Gamma_{\fm}$ is local cohomology at $\fm$ and $(-)^\vee$ is the $k$-dual. 
If $M$ is an $R$-module viewed as a $P$-module via a ring map $P\lra
R$ with $R$ small over $P$ then the equivalence may be taken to be one
of $R$-modules by taking a model of $R$ which is $P$-free.  
Now take $M=R=C^*(X)$; by (i) this is Gorenstein, of shift $a_X$ say.
Since $X$ is simply connected $C^*(X)$ automatically enjoys Gorenstein
duality (since $C_*(\Omega X)$ is connected and therefore has a unique action on $k$), so that
$$\Gamma_{\fm}(C^*(X))\simeq \Sigma^{a_X}C_*(X). $$
Hence
$$\Hom_{C^*(B)}(C^*(X), C^*(B))\simeq \Sigma^{a_X-a_B}C^*(B) $$
as required. 
%since then $R$ is Gorenstein by
%Gorenstein Ascent \cite{DGI2}*{Proposition~8.6}. Thus $S\lra R\lra Q$ comes from a
%fibration 
%$$B\lla E\lla F$$
%We note that 
%$$X\simeq E\Omega B \times_{\Omega B} F, \;\;
%C^*(X)=k\tensor_{C_*(\Omega B)}C^*(F)$$
%which is to say that $C^*(X)$ is the  $\Omega B$-Borel cochains of $F$.  
%$$\Hom_S(R,S)=\Hom_{C^*(B)}(C^*(X), C^*(B))\simeq \Hom_{C_*(\Omega
%  B)}(C_*(F), k)$$
%which is to say the $\Omega B$-Borel cochains on $DF$. 
%Since $B$ is formal, the equivalence $DF\simeq \Sigma^{-f} F$ may 
%be taken to be  $\Omega B$-equivariant and 
%$$\Hom_S(R,S)\simeq \Sigma^{-f}R, $$
%and the map is relatively Gorenstein. 

The final conclusion is that if $X$ is any Gorenstein space, we can
construct a normalization giving a  Symmetric Gorenstein Context. 

\subsection{An example from compact Lie groups}\label{ssec:SGCBG}
Once again we take $R=C^*(BG)$ and we suppose $G$ is a subgroup of 
a connected compact Lie group $U$ (for example by taking a faithful
represenation of $G$ in $U(n)$ and $U=U(n)$). We also assume that the
adjoint representation of $G$ is orientable over $k$ (for example if
$G$ is finite or connected or if $k$ is of characteristic 2). 

This gives the fibration 
$$ BU\lla BG \lla U/G$$
and the cofibration 
$$ C^*(BU)\lra C^*(BG)\lra C^*(U/G)$$
of algebras since connectedness of $U$ means the Eilenberg-Moore spectral sequence
converges.

Accordingly, we take $S=C^*(BU)$, $R=C^*(BG)$. This gives a Symmetric
Gorenstein Context. First,  we  find
$Q=C^*(U/G)$. Since $U$ is connected, $S$ is regular and since $Q$ is
finite, $R_*$ is finitely generated over $S_*$. If $S_*$ is regular,
it follows that $R$ is small over $S$.
%(later we will prove
%independence of normalization (Proposition \ref{prop:invariance}), from which it follows  that 
%$R$ is small over $S$ whenever $S$ is regular).  
Thus we again have $1+1$
g-regular, and that $S$ is Gorenstein. 

Finally 
$$\Hom_{C^*(BU)}(C^*(BG), C^*(BU))\simeq C^*(BG^{-L})$$
where $L$ is the tangent representation at $eG$ in $U/G$, see
\cite{BG7}*{Theorem~6.8} (with the proof completed in \cite{bgen}). Since $U$ is
connected and $ad(G)$ is orientable, $L$ is orientable and $S\lra R$ is relatively
Gorenstein. Thus, we  conclude $\mcF\simeq C_*(U)$ is Gorenstein. 

%If $G$  is also connected then  $\cE \simeq C_*(G)$,  and if $U/G$ is simply
%connected
%$\mcD \simeq C_*(\Omega U/G)$. 

%If we just assume that $U$ is connected and $k$ is of characteristic
%$p$
%then $\mcE =C_*(\Omega (BG_p^{\wedge}))$
%and  $\mcD =C_*(\Omega (U/G_p^{\wedge}))$. 

%If $k$ is of characteristic $p$ we could weaken this and just require that
%$\pi_0(U)$ is $p$-nilpotent. 

\section{Completions}
\label{sec:completion}
The notion of completeness occurs very naturally when passing between derived endomorphism algebras. Thus, unsurprisingly, it will play a key role in formulating a precise relationship between $R$ and $\cE$. As a quick reminder we recall the context from \cites{DG, DGI}. 

\subsection{Cellularization and completion}\label{ssec:completion}
We have already used the functor $EM=\Hom_R(k,M)$ from right $R$-modules to
right $\cE$-modules. Naturally $k$ is a left $\cE$-module, so $E$ has a
left adjoint $TX:=X\tensor_{\cE}k$. The counit of the adjunction 
$$TEM=\Hom_R(k,M)\tensor_{\cE}k \lra M$$
is evaluation and, provided $k$ is proxy-small, this is also the
$k$-cellularization \cites{DG, DGI}. 

Writing $\kdr =ER=\Hom_R(k,R)$, we have $TER=\kdr \tensor_{\cE} k$ and
the associated completion functor is 
$$\Lambda_k M:=\Hom_R(TER, M) \simeq\Hom_{\cE}(ER, EM)=\Hom_{\cE}(\kdr,
EM).$$
This has a universal property on $R$-modules, and in the setting of
classical commutative rings, the homotopy groups of $\Lambda_k M$ are 
given by the left derived functors of completion at the augmentation
ideal \cite{GM}*{Theorem~2.5}. 

We take from this the importance of the functor $\Eb$ defined by
$$\Eb X=\Hom_{\cE}(\kdr, X), $$
which  is naturally a module over
$$\Hom_\cE(\kdr, \kdr)\simeq \Hom_R(TER, R)=\Lambda_k(R), $$
the \emph{completion} of $R$. In this notation 
$$\Eb E M=\Hom_{\cE}(ER, EM)\simeq \Hom_R(TER, M)=\Lambda_k M, $$
the completion of $M$.

If $R$ is Gorenstein then $\kdr$ is a shift of $k$, and so
\begin{displaymath}
\Lambda_k(R) \simeq \Hom_\cE(k,k).
\end{displaymath}
Thus, if $R$ is Gorenstein and complete, $R$ and $\cE$ play interchangeable roles.

\subsection{The six Morita functors}\label{sec:6functors}
We apply the discussion of the previous section to all three rings $S,
R, Q$, using alphabetical mass-production. For the record, this gives functors
$$D\colon \Qmod \lra \modD,\;  E\colon \Rmod \lra \modE \;\,\mbox{and}\;\,  F\colon \Smod \lra \modF $$ 
  defined by 
$$D(L)=\Hom_Q(k, L) ,\; E(M)=\Hom_R(k, M) \;\mbox{ and }\; F(N)=\Hom_S(k, N). $$
These three functors are right adjoints; their left adjoints are given
by suitable tensor products with the left module $k$, but we will not introduce
special notation for these functors. 

For brevity, we write 
$$\Qh =\HomD(\kdq ,\kdq),\; \Rh =\HomE(\kdr,
\kdr)  \;\mbox{ and }\; \Sh =\HomF(\kdr, \kdr)$$
for the completions  of $Q, R$ and $S$, so that we
have maps
$$Q\lra \Qh, \; R\lra \Rh \text{ and } S\lra \Sh. $$
We then define functors 
$$\Db\colon \modD \lra \Qhmod , \Eb\colon \modE \lra \Rhmod 
\mbox{ and } \Fb\colon \modF \lra \Shmod $$ 
 by 
$$\Db (W)=\Hom_{\mcD}(\kdq, W),  \Eb (X)=\Hom_{\cE}(\kdr, X)\, 
\mbox{and}\;\, \Fb (Y)=\Hom_{\cF}(\kds, Y). $$
Again, these three  functors are right adjoints, but we will not need
to discuss their adjoint partners. 

\begin{rem}
\label{rem:comp}
When $R$ is small over $S$, as we always assume, the completion of an $R$-module
agrees with its completion as an $S$-module (or more precisely
the natural map gives an isomorphism $\Lambda^S  q^*M\simeq q^*\Lambda^R M$). 
Accordingly, we will simplify the notation and use $\Lambda $ in both cases. 
\end{rem}

\subsection{Finite generation is independent of complete Gorenstein normalization}
\label{subsec:Gorfg}

We show in this section that finite generation is independent of the chosen 
Symmetric Gorenstein Context provided our rings are complete. This considerably extends the results of Section
\ref{sec:dependence} in our main case of interest. 

\begin{prop}\label{prop:invariance}
Suppose we are given  $q\colon S\lra R$ with $R$ small over $S$, and both $R$ and $S$
g-regular and complete.  Provided $S\lra R$ is relatively Gorenstein, an $R$-module 
$M$ is $R$-small if and only if $q^*M$ is  $S$-small. 
\end{prop}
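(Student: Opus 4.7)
The forward direction is immediate: since $q^*R=R$ is small over $S$ by hypothesis, the triangulated functor $q^*\colon\sfD(R)\to\sfD(S)$ carries $\sfD^c(R)=\thick_R(R)$ into $\thick_S(q^*R)\subseteq\sfD^c(S)$, so $M\in\sfD^c(R)$ forces $q^*M\in\sfD^c(S)$.

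For the reverse direction, suppose $q^*M\in\sfD^c(S)$. My first step is that $q_*q^*M=R\otimes_SM$ is then small over $R$: $q_*$ is left adjoint to $q^*$, the latter preserves arbitrary coproducts, and so $q_*$ preserves smallness. My second step exploits the relatively Gorenstein hypothesis. Smallness of $R$ over $S$ together with $\Hom_S(R,S)\simeq\Sigma^{a_q}R$ identifies the right adjoint $q^!=\Hom_S(R,-)$ with a shift of the left adjoint:
\[
q^!(-)\;\simeq\;\Hom_S(R,S)\otimes_S(-)\;\simeq\;\Sigma^{a_q}q_*(-),
\]
so that $q^*$ has matching left and right adjoints. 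In particular both the counit $q_*q^*M\to M$ and the unit $M\to q^!q^*M\simeq\Sigma^{a_q}q_*q^*M$ relate $M$ to a small object.

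My third and decisive step is to upgrade ``$q_*q^*M$ is $R$-small'' to ``$M$ is $R$-small'' using the completeness hypotheses. The plan is a Thomason--Neeman localization argument in the spirit of Lemma \ref{lem:Thomason}: it is enough to show that $M$ lies in the localizing subcategory $\Loc_R(q_*q^*M)$, since then compactness of $q_*q^*M$ and of $M$ promotes the inclusion to $M\in\thick_R(q_*q^*M)\subseteq\sfD^c(R)$. To verify membership in $\Loc_R(q_*q^*M)$, I would pass through the Morita picture of Section~\ref{sec:6functors}: completeness identifies $R$ with $\Hom_\cE(k,k)$ and $S$ with $\Hom_\cF(k,k)$, while g-regularity forces $\cE$ and $\cF$ to be finite-dimensional over $k$; the ambidexterity produced in Step~2 then channels the counit data into $\Loc_R(q_*q^*M)$.

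The main obstacle is precisely this third step. The counit $q_*q^*M\to M$ does not split in general, and the naive bar-style filtration of $M$ by iterated $(q_*q^*)^n$ is a priori infinite. The crux is to use completeness to convert this infinite filtration into a finite one staying inside $\Loc_R(q_*q^*M)$, at which point Thomason--Neeman localization, exactly as in the proof of Lemma~\ref{lem:Thomason}, delivers the conclusion.
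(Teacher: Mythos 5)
Your forward direction and the first two steps of the reverse direction are fine, but the ``decisive'' Step~3 --- which you yourself flag as the main obstacle --- is both missing and, as sketched, logically circular. Thomason--Neeman localization (the statement used in Lemma~\ref{lem:Thomason}) says: if $X$ and $Y$ are \emph{both already compact} and $X\builds Y$, then $X\finbuilds Y$. You propose to show $M\in\Loc_R(q_*q^*M)$ and then let ``compactness of $q_*q^*M$ \emph{and of $M$}'' promote this to $M\in\thick_R(q_*q^*M)$; but the compactness of $M$ is exactly the conclusion you are trying to reach, so the theorem cannot be invoked. Membership in the localizing subcategory generated by a compact object does not imply compactness (every $R$-module lies in $\Loc_R(R)$), and no amount of ``finite filtration'' rhetoric substitutes for an actual argument here. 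In Lemma~\ref{lem:Thomason} the trick works only because the target object is $\cE$ itself, which is tautologically compact over $\cE$. Note also that the completeness and relatively Gorenstein hypotheses play no identifiable role in your sketch beyond vague gestures, whereas they are precisely what the real argument hinges on.

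The paper resolves the problem by working on the Koszul-dual side rather than trying to descend along the counit $q_*q^*M\to M$. Concretely: g-regularity of $R$ gives $k\finbuilds\cE$, whence $\cF\builds\cE$ over $\cE$; since $\cF$ and $\cE$ are \emph{both} compact $\cE$-modules (Lemma~\ref{lem:MoritaRsmall}), Thomason legitimately upgrades this to $j^*\cF\finbuilds\cE$. From $S\finbuilds q^*M$ and the commutation relation $Fq^*\simeq j_*E$ (Lemma~\ref{lem:topface}) one gets $j^*FS\finbuilds EM$ over $\cE$, and the relatively Gorenstein hypothesis identifies $j^*FS\simeq E\Hom_S(R,S)\simeq\Sigma^{a_q}\kdr$. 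Applying $\Eb$ then yields $\Lambda R\finbuilds\Lambda M$, and the completeness hypotheses identify $\Lambda M$ with $M$ and $\Lambda R$ with $R$. If you want to salvage your plan, you must either reproduce this dual-side argument or find an independent mechanism for the descent step; as written, the proof does not go through.
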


\begin{proof}
We have assumed $R$ is g-regular. Thus 
$k \finbuilds \cE$. Since $k$ is an $\cF$-module, $\cF \builds k$ over
$\cF$, and hence over $\cE$ by restriction. Hence  $j^*\cF\builds
\cE$ so, since $\cF$ and $\cE$ are small over $\cE$, we see $j^*\cF \finbuilds \cE$ by 
Thomason's Localisation Theorem \cite{NeeGrot}*{Theorem~2.1}. 
 
Now consider an $R$-module $M$. Since $S\finbuilds R$, it is clear that if  $R \finbuilds M$
(over $R$ and hence over $S$ by restriction)
then $S\finbuilds q^*M$.

On the other hand, suppose   $S \finbuilds q^*M$. We then see that as 
$\cF$-modules 
$$FS \finbuilds Fq^*M\simeq j_*EM=\cF\tensor_{\cE}EM,$$
where the first equality is via Lemma~\ref{lem:topface} below.
This then remains true after applying $j^*$, and since $j^*\cF
\finbuilds \cE$, 
$$j^*\cF \tensor_{\cE} EM  \finbuilds \cE \tensor_{\cE}EM\simeq EM.$$

In fact 
$$j^*FS= j^*\Hom_S(k,S)\simeq \Hom_R(k, \Hom_S(R,S))=E\Hom_S(R,S).$$
Thus
$$E\Hom_S(R,S)\finbuilds EM, $$
and we may apply $\Eb$ to see 
$$\Lambda \Hom_S(R,S)=\Eb E \Hom_S(R,S) \finbuilds \Eb EM=\Lambda M, $$
so that in the relatively Gorenstein case,  the completion of $R$
finitely builds the completion of $M$. Since $S$ is complete by
hypothesis and $q^*M$ is small, $q^*M$ is complete over $S$  and hence
$M$ is complete over $R$ which is, by assumption, itself complete. Thus $R$ builds $M$ as claimed.
\end{proof}

\begin{cor}
\label{cor:Gorfg}
If $R$ is complete, any relatively Gorenstein normalization $q\colon S\lra R$, such that $S$ is complete, 
defines the same notion of finite generation.  
\end{cor}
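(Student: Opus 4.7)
The plan is to deduce the corollary as a direct consequence of Proposition~\ref{prop:invariance}. The key observation is that Proposition~\ref{prop:invariance} says something much stronger than invariance: under its hypotheses, $q$-finite generation is nothing other than $R$-smallness, a property intrinsic to $M$ as an $R$-module and therefore independent of the normalization $q$.

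In detail, suppose $q\colon S\lra R$ and $q'\colon S'\lra R$ are two complete relatively Gorenstein normalizations of $R$ with $R$ itself complete. I would first verify that both normalizations fulfill the hypotheses of Proposition~\ref{prop:invariance}. By the very definition of a normalization (Section~\ref{sec:Db}), $S$ is g-regular and $R$ is small over $S$; the assumption that the normalization is \emph{complete} provides completeness of $S$; the assumption that it is \emph{Gorenstein} provides that $q$ is relatively Gorenstein; and $R$ is complete by the standing hypothesis of the corollary. The same checks apply to $q'$.

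Applying Proposition~\ref{prop:invariance} to $q$, an $R$-module $M$ satisfies $q^*M\in \sfD^\mathrm{c}(S)$ if and only if $M\in \sfD^\mathrm{c}(R)$. Applying it again to $q'$, we likewise obtain $(q')^*M\in \sfD^\mathrm{c}(S')$ if and only if $M\in \sfD^\mathrm{c}(R)$. Combining these two biconditionals gives
\begin{displaymath}
\sfD^{q\mathrm{-b}}(R) \;=\; \sfD^\mathrm{c}(R) \;=\; \sfD^{q'\mathrm{-b}}(R),
\end{displaymath}
which is exactly the assertion that the two normalizations define the same notion of finite generation.

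There is essentially no obstacle: all the technical work has been absorbed into Proposition~\ref{prop:invariance}. If anything, the only subtlety worth a sentence of commentary is the slightly surprising consequence that, under these hypotheses, the bounded derived category collapses onto the small objects; this is a feature of the complete Gorenstein setting rather than an obstacle to the proof.
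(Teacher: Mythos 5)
There is a genuine gap. Proposition~\ref{prop:invariance} has as a hypothesis that \emph{both} $R$ and $S$ are g-regular, and you only verify g-regularity of $S$ (which indeed comes for free from the definition of a normalization). The corollary does not assume $R$ is g-regular, and in the cases the paper cares about $R$ is typically \emph{not} g-regular --- that is the whole point of defining singularity categories. So the proposition simply does not apply to $q\colon S\lra R$ itself. Your conclusion $\sfD^{q\mathrm{-b}}(R)=\sfD^{\mathrm{c}}(R)$ is in fact false in general: $k$ always lies in $\sfD^{q\mathrm{-b}}(R)$ (since $k$ is small over $S$ by definition of a normalization), but $k$ lies in $\sfD^{\mathrm{c}}(R)$ only when $R$ is g-regular. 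If your argument were correct, every $\sfD_{q\mathrm{-sg}}(R)$ would vanish and the paper would be vacuous. The ``slightly surprising consequence'' you flag at the end is not a feature of the complete Gorenstein setting; it is the contradiction that should have signalled the misapplication.

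The paper's proof avoids this by never invoking Proposition~\ref{prop:invariance} for a map with target $R$. Given two normalizations $S_1\lra R$ and $S_2\lra R$, it forms the product $S_1\times S_2$ and applies the proposition to the projections $S_1\times S_2\lra S_i$. There both the source and the target \emph{are} g-regular and complete (each $S_i$ is g-regular because it is a normalization, and the product inherits this), each $S_i$ is small over $S_1\times S_2$ as a retract, and the projections are relatively Gorenstein of shift $0$. The proposition then says that for an $R$-module $M$, the restriction $q_i^*M$ is small over $S_i$ if and only if the further restriction to $S_1\times S_2$ is small; since the latter condition is the same for $i=1,2$, the two notions of finite generation agree. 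To repair your proof you would need either this detour through the product, or an additional standing hypothesis that $R$ is g-regular, which would destroy the scope of the corollary.
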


\begin{proof}
Suppose we have two such relatively Gorenstein normalizations
$S_1\lra R$ and $S_2 \lra R$. We have a commutative diagram
$$\diagram 
S_1\times S_2 \dto \rto & S_1 \dto \\
S_2 \rto &R
\enddiagram $$
of ring spectra. Given an $R$-module $M$, this is small over $S_1$ if and only if it is
small over $S_1\times S_2$ by Proposition \ref{prop:invariance}, and
similarly it is small over $S_2$ if and only if it is small over
$S_1\times S_2$. Accordingly it is small over $S_1$ if and only if it
is small over $S_2$ as required. 
\end{proof}

This permits us to understand small objects over $g$-regular rings in
considerable generality. 

\begin{cor}\label{cor:c-small}
Let $S$ be a complete and g-regular augmented ring spectrum. Suppose that $S$ admits a relatively Gorenstein normalization $T\lra S$ such that $T$ is complete and coefficient-regular. Then an $S$-module $N$ is small if and only if $N_*$ is finitely generated over $S_*$.  

Accordingly, if $q\colon S\lra R$ is normalization of a ring spectrum $R$ then 
$$\sfD^{q\mathrm{-b}}(R)=\sfD^f(R)=\{ M \st M_* \mbox{ is finitely generated
  over } R_*\}.$$
\end{cor}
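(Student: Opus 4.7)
The plan is to reduce the problem, via Proposition \ref{prop:invariance}, to the $c$-regular case where smallness is already understood. Let $T \stackrel{r}{\to} S$ be the assumed complete relatively Gorenstein $c$-regular normalization. Since $T$ and $S$ are both complete and g-regular, with $S$ small over $T$ and $r$ relatively Gorenstein, Proposition \ref{prop:invariance} applies to $r$: for any $S$-module $N$, smallness of $N$ over $S$ is equivalent to smallness of $r^*N$ over $T$.

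Next, because $T$ is $c$-regular, the general fact recalled in Subsection \ref{subsec:cregsmall} (that over a $c$-regular ring, smallness coincides with finite generation of homotopy) tells us that $r^*N$ is $T$-small if and only if $N_*$ is finitely generated as a $T_*$-module. To transfer this across the normalization, observe that $S$ small over $T$ forces $S_*$ to be a finitely generated $T_*$-module (via Lemma \ref{lem_thick_fp}, or directly from $T \finbuilds S$). A routine Noetherian argument then gives that an $S_*$-module is finitely generated over $T_*$ if and only if it is finitely generated over $S_*$: the forward direction uses that $T_*$-generators are $S_*$-generators through the ring map $T_* \to S_*$, and the reverse uses that products of $T_*$-generators of $S_*$ with $S_*$-generators of $N_*$ generate $N_*$ over $T_*$. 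Combining these equivalences yields the first assertion.

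For the second assertion, let $q\colon S \to R$ be a normalization. By definition $M \in \sfD^{q\mathrm{-b}}(R)$ means $q^*M$ is small over $S$, which by the first part is equivalent to $M_*$ being finitely generated as an $S_*$-module. Since $R$ is small over $S$, the argument used above (applied now to the ring map $S_* \to R_*$, using that $R_*$ is finitely generated over $S_*$ by Lemma \ref{lem_thick_fp}) shows that $M_*$ is finitely generated over $S_*$ if and only if it is finitely generated over $R_*$. This gives the equality $\sfD^{q\mathrm{-b}}(R) = \sfD^f(R)$.

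The main obstacle, and the one already handled by the preceding theory, is Proposition \ref{prop:invariance}: once invariance of smallness under the Gorenstein normalization $T \to S$ is in hand, the remainder of the argument is a straightforward transfer of the $c$-regular description of small objects, together with elementary finite generation comparisons along finite ring maps of coefficients.
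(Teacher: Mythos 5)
Your proposal is correct and follows essentially the same route as the paper: choose the $c$-regular normalization $T\to S$, use Proposition~\ref{prop:invariance} to transfer smallness between $S$ and $T$, invoke the $c$-regular characterisation of small $T$-modules, and compare finite generation of homotopy along the finite ring maps $T_*\to S_*\to R_*$. The only cosmetic difference is that the paper handles the forward direction ("small implies finitely generated") directly from noetherianness of $S_*$ rather than via the (trivial) direction of Proposition~\ref{prop:invariance}, but this changes nothing of substance.
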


\begin{proof}
%It is clear that if $N$ is small then $N_*$ is finitely generated over $S_*$.
Let us choose $T\lra S$ a complete coefficient-regular normalization as in the statement, and recall that by definition (see \ref{subsec:cregsmall}) $T_*$ is noetherian. Since $S$ is small over $T$ it follows that $S_*$ is a finitely generated $T_*$-module and hence $S_*$ is itself noetherian. Thus if $N$ is small over $S$ the homotopy $N_*$ is finitely generated over $S_*$.

On the other hand we suppose $N_*$ is finitely
generated over $S_*$. Then
since $S_*$ is finitely generated over $T_*$, the module  
$N_*$ is finitely generated over $T_*$, and as observed in Subsection \ref{subsec:cregsmall}
$N$ is small over $T$. By Proposition \ref{prop:invariance} the $S$-module $N$
is also small. 
\end{proof}

It is worth making one special case explicit. 

\begin{ex}\label{ex:BG-c-small}
If $G$ is a finite $p$-group then a $C^*(BG)$-module $M$ is small if
and only if $M_*$ is finitely generated over $H^*(BG)$. This follows from Corollary~\ref{cor:c-small} applied to the normalization discussed in Section~\ref{ssec:SGCBG}, i.e. the map $C^*(BU(n))\lra C^*(BG)$ induced by a faithful representation $G\lra U(n)$. 
\end{ex}

\section{Commutation relations}\label{sec:cr}

Assuming a  Symmetric Gorenstein Context, as in Definition~\ref{defn:sgc} whose notation we follow, we have  defined, in Section~\ref{sec:6functors}, six functors $D,E,F$ 
$\Db, \Eb, \Fb$ relating a number of module categories. These satisfy
a large number of commutation relations, that we describe in this section. As these commutativity relations might be 
of interest in more general situations we are precise about exactly what is used at each step.

\begin{thm}\label{thm:relations}
Given a Symmetric Gorenstein Context, we have eight commutation relations between our functors, summarized
by the fact that the eight squares in the following diagrams commute. 
$$\diagram 
\Smod \rto^{F}&\modF \rto^{\Fb}&\Shmod\\
\Rmod \rto^{E}\uto^{q^*}&\modE \rto^{\Eb}\uto^{j_*}&\Rhmod \uto^{\qh^*}\\
\Qmod \rto^{D}\uto^{p^*}&\modD \rto^{\Db}\uto^{i_*}&\Qhmod \uto^{\ph^*}\\
\enddiagram$$

$$\diagram 
\Smod \rto^{\Sigma^{-a_S}F}\dto^{q_*} &\modF \rto^{\Sigma^{a_S}\Fb}\dto^{j^*} &\Shmod\dto^{\qh_*} \\
\Rmod \rto^{\Sigma^{-a_R}E}\dto^{p_*}&\modE \rto^{\Sigma^{a_R}\Eb}\dto^{i^*}&\Rhmod \dto^{\ph_*}\\
\Qmod \rto^{\Sigma^{-a_Q} D}&\modD \rto^{\Sigma^{a_Q}\Db}&\Qhmod\\
\enddiagram$$

We recall that $\Rh$ is the completion of $R$, and so on for the other ring spectra, and $\qh\colon \Sh \lra \Rh$ is the map induced on completions by $q\colon S\lra R$, and so on for the other maps.
\end{thm}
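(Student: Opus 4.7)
The plan is to verify each of the eight natural equivalences separately. In every case, the two composite functors can be re-expressed as a derived Hom or tensor with a common bimodule built from $k$ and one of $Q, R, \cF, \mcD$, with the shifts in the second diagram tracking the difference between base change and coinduction.

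For the four unshifted squares of the first diagram, I would proceed by a Morita-style argument. As a representative case, the top-left square $F q^* \simeq j_* E$ is verified via
\begin{displaymath}
F q^*(M) = \Hom_S(k, M) \simeq \Hom_R(Q, M) \simeq \Hom_R(Q, k) \otimes_\cE \Hom_R(k, M) \simeq \cF \otimes_\cE E(M) = j_* E(M),
\end{displaymath}
where the first equivalence is the $q_* \dashv q^*$ adjunction with $q_* k = Q$; the second is the Morita comparison, valid because $Q \in \Thick_R(k)$ by Lemma~\ref{lem:proxy}; and the third uses $\cF \simeq \Hom_R(Q, k)$ coming from the same adjunction. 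The bottom-left square $E p^* \simeq i_* D$ is proved identically ``one level down'', using $\cE \simeq \Hom_Q(p_* k, k)$ together with the finiteness of $p_* k$ in $\Thick_Q(k)$, which one imports from the $g$-regularity of $\mcD$ via Koszul duality in the complete setting. The two remaining unshifted squares on the right-hand column are handled by running the same Morita game over the completed rings with $\Eb, \Fb, \Db$ and $\kdr, \kds, \kdq$.

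For the four shifted squares of the second diagram, the shifts arise from comparing base change with coinduction. Since $R$ is small over $S$ and $\Hom_S(R, S) \simeq \Sigma^{a_q} R$, there is a natural equivalence of functors $\Hom_S(R, -) \simeq \Sigma^{a_q}(R \otimes_S -)$. Combined with the adjunction $q^* \dashv \Hom_S(R, -)$ this yields
\begin{displaymath}
E q_*(N) \simeq \Sigma^{-a_q} \Hom_R(k, \Hom_S(R, N)) \simeq \Sigma^{-a_q} \Hom_S(k, N) \simeq \Sigma^{-a_q} j^* F(N),
\end{displaymath}
and the precise shifted equality of the theorem drops out after invoking the Gorenstein ascent relation $a_S = a_R + a_q$ of Lemma~\ref{lem:relGorascdesc}. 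The other three shifted squares are treated analogously, applied to $p$, $\qh$ and $\ph$; for the completed rings one uses that the Gorenstein shifts of $\Rh, \Sh, \Qh$ agree with those of $R, S, Q$, which is part of the output of the SGC hypotheses.

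The step I expect to be the main obstacle is the bookkeeping of bimodule structures, together with verifying that the objects which feed the Morita comparisons---most notably $p_* k$ as a $Q$-module, and its analogues after completion---really lie in the appropriate $\Thick(k)$. This is precisely where the symmetric nature of the Symmetric Gorenstein Context is fully used: the $g$-regularity of $\mcD$ and the relative $g$-regularity of all four of $p, q, i, j$ together import these finiteness conditions across the Koszul dual pair.
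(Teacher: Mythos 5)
Your proposal is correct in substance and, square by square, uses the same mechanism as the paper: a composition/evaluation map which is an equivalence because the relevant corepresenting object lies in the appropriate $\Thick(k)$, together with the conversion between coinduction and base change supplied by smallness plus the relative Gorenstein condition. The global organization differs, however: you propose to verify all eight squares directly, whereas the paper proves only the top two squares of each diagram (Lemma~\ref{lem:topface}, Proposition~\ref{prop:rhend}, Lemma~\ref{lem:fourthcomm}, Lemma~\ref{lem:thirdcomm}) and then obtains the remaining four by the symmetry of the Symmetric Gorenstein Context, applying the same four statements to the Koszul dual cofibre sequence $\mcD\to\cE\to\cF$ in place of $S\to R\to Q$ and tracking the resulting shifts ($E$ corresponds to $\Sigma^{a_R}\Eb$, and so on, as in Lemma~\ref{lem_isoms}). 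Your route costs four extra direct verifications but avoids that shift bookkeeping; the paper's route gets the bottom rows for free but uses the full symmetry of the SGC to do so. One caution: for the two unshifted squares in the completed (right-hand) column, ``running the same Morita game over the completed rings'' is not quite enough. The paper's proof that $\qh^*\Eb X\simeq\Fb j_*X$ needs, in addition, the identification $\kdr\simeq\Sigma^{-a_q}j^*\kds$ (from relative Gorensteinness of $q$) and the equivalence $\Hom_{\cE}(\cF,X)\simeq\Sigma^{-a_q}X\tensor_{\cE}\cF$, which uses both smallness of $\cF$ over $\cE$ and the relative Gorenstein property of $j$; similarly Lemma~\ref{lem:thirdcomm} requires showing $\kds\finbuilds\kdr\tensor_{\cE}\cF$, which the paper does by identifying $\kdr\tensor_{\cE}\cF$ with $F(R)$. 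These inputs are all available in the SGC, and you do invoke the analogous induction/coinduction comparison for the shifted squares, but they must be named for the completed column as well --- that is where your sketch is thinnest.
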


\begin{rem}
We note that there are no suspensions in the top diagram, and that in
the lower diagram each of the functors has a shift equal to
plus or minus the Gorenstein shift of the two rings in the relevant
row. For instance, this corresponds to the fact that we have $a_q = a_S - a_R$.
\end{rem}

The strategy of proof is to prove that the upper two squares in the
first and second diagram commute. The commutation of the lower two
will then follow by using the symmetry of the Symmetric Gorenstein
Context. 

The arguments for commutation of the two squares are similar for 
the first and second diagrams, but in view of the suspensions, some
differences are inevitable.

\subsection{The  diagram without suspensions}
We will show that the top two squares in the top diagram commute
(i.e., those involving  $q^*$ and $j_*$ and the 
Morita functors). 

%$$\diagram 
%&\Smod \ar[rrr]^F \ar[dddrrr]_{\Lambda}&&&\modF\ar@{->}[ddd]^{\Fb}\\
%\Rmod \ar[rrr]^E \ar[ru]^{q^*}\ar[dddrrr]_{\Lambda}&&&\modE\ar@{->}[ddd]^{\Eb} \ar[ru]^{j_*}&\\
%&&&&\\
%&&&&\Shhmod\\
%&&&\Rhhmod \ar[ru]^{\qh^*}&
%\enddiagram$$
We remark that the two horizontal composites are completion  by 
the discussion in Section \ref{sec:completion}, and by Remark
\ref{rem:comp} the two completions are compatible under restriction, i.e.\ 
the outer rectangle commutes.

\subsection{The top left hand square}

\begin{lem}
\label{lem:topface}
The top left hand square  commutes in the sense that for any $R$-module $M$ we have
a natural equivalence
$$j_*EM \stackrel{\simeq}\lra F(q^*M). $$
\end{lem}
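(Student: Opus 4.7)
The plan is to convert both sides of the claimed equivalence into mapping spectra out of the proxy object $Q=R\otimes_S k$, and then exhibit a natural composition map between them which is an equivalence because both sides are exact functors of $Q$ and $k$ finitely builds $Q$.

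First I would rewrite the right-hand side. By the adjunction $q_* \dashv q^*$ (so $q_* = R\otimes_S -$ and $q^* = \Hom_R({}_S R, -)$) we have
\[
F(q^*M) \;=\; \Hom_S(k, q^*M) \;\simeq\; \Hom_R(R\otimes_S k, M) \;=\; \Hom_R(Q, M).
\]
The same adjunction applied with $M=k$ identifies
\[
\cF \;=\; \Hom_S(k,k) \;\simeq\; \Hom_R(Q, k),
\]
so the left-hand side becomes
\[
j_* EM \;=\; \Hom_R(k, M) \otimes_\cE \cF \;\simeq\; \Hom_R(k, M) \otimes_\cE \Hom_R(Q, k).
\]

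Next I would introduce the natural composition map
\[
\mu_Q \colon \Hom_R(k, M) \otimes_\cE \Hom_R(Q, k) \;\lra\; \Hom_R(Q, M), \qquad g \otimes f \;\longmapsto\; g\circ f,
\]
and record that it is $\cE$-balanced: for $\phi\in\cE=\Hom_R(k,k)$ both $(g\cdot\phi)\otimes f$ and $g\otimes(\phi\cdot f)$ map to $g\circ\phi\circ f$ since the right $\cE$-action on $\Hom_R(k,M)$ is by precomposition and the left $\cE$-action on $\Hom_R(Q,k)$ (via $j$) is by postcomposition. The map $\mu_Q$ is natural in $Q\in\sfD(R)$, and both source and target are cohomological (exact) functors of $Q$.

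Now I would finish by a thick-subcategory argument. When $Q=k$ the source collapses to $\Hom_R(k,M)\otimes_\cE \cE \simeq \Hom_R(k,M)$, matching the target, and $\mu_k$ is the unit equivalence. The full subcategory of $Q$ on which $\mu_Q$ is an equivalence is therefore thick and contains $k$; hence it contains $\thick(k)$. By Lemma~\ref{lem:proxy} we have $k\finbuilds Q$, so this applies to our $Q=R\otimes_S k$, yielding the desired equivalence $j_*EM \simeq F(q^*M)$.

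The only subtle point is the bookkeeping in the second paragraph—checking that the bimodule structures on $\cF$ coming from $j$ match the ones induced by the identification $\cF\simeq \Hom_R(Q,k)$ so that the composition $\mu_Q$ really is $\cE$-balanced; everything else is formal. Naturality and well-definedness of $\mu_Q$ in the homotopy-coherent setting is handled in the standard way by modeling $\Hom$ by derived mapping spectra and $\otimes_\cE$ by the two-sided bar construction.
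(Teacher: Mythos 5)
Your proposal is correct and follows essentially the same route as the paper's proof: both identify $F(q^*M)\simeq\Hom_R(Q,M)$ and $\cF\simeq\Hom_R(Q,k)$ via the $(q_*,q^*)$ adjunction, introduce the evaluation/composition map $\Hom_R(k,M)\otimes_\cE\Hom_R(T,k)\to\Hom_R(T,M)$, observe it is an equivalence for $T=k$ and hence on $\thick(k)$, and conclude using $k\finbuilds Q$ from Lemma~\ref{lem:proxy}. Your extra remarks on $\cE$-balancedness just make explicit a point the paper leaves implicit.
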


\begin{proof}
We have 
$$j_*EM =\Hom_R(k, M)\tensor_{\cE} \Hom_S(k,k)\simeq 
\Hom_R(k, M)\tensor_{\cE} \Hom_R(R\tensor_S k,k), $$
and there is a natural evaluation map to 
$$\Hom_R(R\tensor_S k, M). $$

Indeed, we have a map 
$$\Hom_R(k, M)\tensor_{\cE} \Hom_R(T,k) \lra \Hom_R(T, M)$$
for any $R$-module $T$. It is evidently an isomorphism when $T=k$ and
hence for any module finitely built from $k$. In particular this
applies to $T=R\tensor_Sk $, which is finitely built by $k$ as in the proof of 
Lemma~\ref{lem:proxy}, to give an isomorphism
\begin{displaymath}
j_*EM \simeq \Hom_R(k, M)\tensor_{\cE} \Hom_R(R\tensor_S k,k) \simeq \Hom_R(R\tensor_S k, M).
\end{displaymath}
It then just remains to note that $\Hom_R(R\tensor_S k, M) \simeq \Hom_S(k,q^*M) = F(q^*M)$.
\end{proof}

\subsection{The top right square}
For the right hand square one needs to use a little more. Of course, the conditions we require 
 hold in the case of principal interest i.e.\ the Symmetric Gorenstein Context.

\begin{prop}
\label{prop:rhend}
Suppose $R$ and $S$ are Gorenstein and $S\lra R$ is relatively
Gorenstein.  
For an $\cE$-module $X$ there is a natural equivalence
$$\qh^*\Eb X\stackrel{\simeq}\lra \Fb j_*X. $$
\end{prop}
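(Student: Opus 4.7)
The plan is to identify $\kds$ with the base change of $\kdr$ along $j$, reduce the comparison to a self-duality property of $\cF$ as an $\cE$-module, and then verify that the resulting Gorenstein shifts cancel.

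First I would apply the previous lemma (Lemma~\ref{lem:topface}) to the $R$-module $R$ itself to obtain
\begin{displaymath}
\kds = FS = F q^* R \simeq j_* E R = j_* \kdr
\end{displaymath}
naturally as right $\cF$-modules. Using this and the $(j_*, j^*)$-adjunction gives
\begin{displaymath}
\Fb j_* X = \Hom_\cF(\kds, j_* X) = \Hom_\cF(j_* \kdr, j_* X) \simeq \Hom_\cE(\kdr, j^* j_* X),
\end{displaymath}
so the content of the proposition is that the unit $\eta_X\colon X \to j^* j_* X$ induces an equivalence after applying $\Hom_\cE(\kdr, -)$.

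Next I would exploit duality of $\cF$ as an $\cE$-module. By Lemma~\ref{lem:MoritaRsmall} the bimodule $\cF$ is small over $\cE$, so its tensor functor is representable by its $\cE$-linear dual; Lemma~\ref{lem:jrelGor} identifies this dual as $\Hom_\cE(\cF, \cE) \simeq \Sigma^{-a_q}\cF$. This gives, for every right $\cE$-module $Y$, an equivalence
\begin{displaymath}
j_* Y = Y \otimes_\cE \cF \;\simeq\; \Sigma^{a_q}\Hom_\cE(\cF, Y).
\end{displaymath}
Applying this with $Y = X$ and combining with hom-tensor adjunction rewrites
\begin{displaymath}
\Fb j_* X \;\simeq\; \Sigma^{a_q} \Hom_\cE(\kdr \otimes_\cE \cF, X) = \Sigma^{a_q}\Hom_\cE(j^* j_* \kdr, X) = \Sigma^{a_q}\Hom_\cE(j^* \kds, X),
\end{displaymath}
where the last equality uses Step~1 again.

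Finally I would identify $j^* \kds$ with $\Sigma^{a_q}\kdr$ inside $\modE$. The Gorenstein hypotheses give $\kdr \simeq \Sigma^{a_R} k$ in $\modE$ and $\kds \simeq \Sigma^{a_S} k$ in $\modF$, where $k$ carries the module structure from the augmentations. Since $j\colon \cE \to \cF$ is compatible with augmentations, restriction yields $j^* \kds \simeq \Sigma^{a_S} k$ in $\modE$; using $a_S = a_R + a_q$ from Lemma~\ref{lem:relGorascdesc} this equals $\Sigma^{a_q}\kdr$. Substituting, the two shifts by $\Sigma^{a_q}$ cancel and we obtain $\Fb j_* X \simeq \Hom_\cE(\kdr, X) = \qh^* \Eb X$.

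The main obstacle is the bimodule bookkeeping needed to guarantee that the chain of equivalences is compatible with the $\Sh$-module structures on both sides (with $\Sh$ acting on $\qh^* \Eb X$ through $\qh\colon\Sh\to\Rh$ and directly on $\Fb j_*X$ via $\Hom_\cF(\kds,\kds)$), and natural in $X$. In particular, one must verify that the Gorenstein equivalence $\kdr \simeq \Sigma^{a_R} k$ is genuinely one of right $\cE$-modules (rather than merely of spectra), so that $j^*$ interchanges it with its $\cF$-analog up to the expected shift; all the other steps are then natural by construction.
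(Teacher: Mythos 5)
Your overall route is the paper's: the two essential inputs --- the self-duality $j_*X = X\tensor_\cE\cF\simeq\Sigma^{a_q}\Hom_\cE(\cF,X)$, which follows from smallness of $\cF$ over $\cE$ (Lemma~\ref{lem:MoritaRsmall}) together with Lemma~\ref{lem:jrelGor}, and the identification of $j^*\kds$ with $\Sigma^{a_q}\kdr$ as $\cE$-modules --- are exactly the two steps of the paper's argument, and your Steps 3 and 4 assemble them the same way. (The paper gets the second input directly from the relative Gorenstein condition alone, via $\kdr = \Hom_R(k,R)\simeq\Hom_R(k,\Sigma^{-a_q}\Hom_S(R,S))\simeq\Sigma^{-a_q}\Hom_S(k,S) = \Sigma^{-a_q}\kds$, which sidesteps your worry about whether the absolute Gorenstein equivalences $\kdr\simeq\Sigma^{a_R}k$ and $\kds\simeq\Sigma^{a_S}k$ are module-linear.)

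However, Step 1 is wrong. You write $\kds = FS = Fq^*R$, but $q^*R$ is $R$ restricted to an $S$-module, not $S$; so $Fq^*R = \Hom_S(k,R)$, whereas $FS = \Hom_S(k,S) = \kds$. Lemma~\ref{lem:topface} applied to $M=R$ gives $j_*\kdr\simeq F(R) = \Hom_S(k,R)$ (this is precisely how it is used at the end of the proof of Lemma~\ref{lem:thirdcomm}), and this is not $\kds$: for instance if $R$ is a free $S$-module of rank $n$ then $\Hom_S(k,R)\simeq(\Sigma^{a_S}k)^{\oplus n}$ while $\kds\simeq\Sigma^{a_S}k$. So $\kds\not\simeq j_*\kdr$ unless $R\simeq S$; the correct relation is the one you prove in Step 4, namely that the \emph{restriction} $j^*\kds$ is $\Sigma^{a_q}\kdr$, not that the \emph{base change} $j_*\kdr$ is $\kds$. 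Consequently Step 2 and the middle equalities of Step 3 (``$=\Hom_\cE(j^*j_*\kdr,X)=\Hom_\cE(j^*\kds,X)$'') are invalid as written. The argument is salvageable without Step 1: starting from $\Fb j_*X=\Hom_\cF(\kds,j_*X)$, substitute $j_*X\simeq\Sigma^{a_q}\Hom_\cE(\cF,X)$ and apply the restriction--coinduction adjunction $\Hom_\cF(W,\Hom_\cE(\cF,X))\simeq\Hom_\cE(j^*W,X)$ to land on $\Sigma^{a_q}\Hom_\cE(j^*\kds,X)$ directly; your Step 4 then finishes, and this is exactly the paper's proof.
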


\begin{proof}
We begin by noting that if $\Hom_S(R,S)\simeq \Sigma^{a_q}R$ then 
$$\kdr \simeq  \Hom_{R}(k, \Sigma^{-a_q}\Hom_S(R,S))\simeq
\Sigma^{-a_q}\Hom_S(k, S)=\Sigma^{-a_q}\kds. $$
%$$\kdr=\Hom_{R}(k, R)\simeq  \Hom_{R}(k, \Sigma^{-a_q}\Hom_S(R,S))\simeq
%\Sigma^{-a_q}\Hom_S(k, S)=\Sigma^{-a_q}\kds. $$
Thus in particular, the $\cE$-module $\kdr$ is the restriction of the
$\cF$-module $\Sigma^{-a_q}\kds$.

We have
$$\Fb j_*X=\Hom_{\cF}(\kds, X\tensor_{\cE}\cF) , $$
and 
$$\qh^*\Eb X=\qh^*\Hom_{\cE}(\kdr, X)\simeq
\Hom_{\cF}(\Sigma^{-a_q} \kds, \Hom_{\cE}(\cF, X)). $$

Now, we have a natural equivalence
$$\Sigma^{a_q} X\tensor_{\cE}\Hom_{\cE}(\cF, \cE)
\stackrel{\simeq}\lra
\Sigma^{a_q}\Hom_{\cE}(\cF, X),$$
where the equivalence uses the fact (Lemma \ref{lem:MoritaRsmall})
that $\cF$ is small over $\cE$. Finally,  $\Sigma^{a_q}\Hom_{\cE}(\cF, \cE) \simeq \cF$, since $j$ is
relatively Gorenstein and the claimed identification follows. 
\end{proof}

\subsection{The  diagram with suspensions}

The first row of the second diagram relates $q_*$ and $j^*$, and by contrast with the
first, this one involves suspensions. 

%We consider the  the diagram
%$$\diagram 
%&\modF \ar[rrr]^{\Fb} \ar[dl]^{j^*}\ar[dddrrr]&&&\Shmod \ar[dl]^{\hat{q}_*}\ar@{->}[ddd]^{F}\\
%\modE \ar[rrr]^{\Eb} \ar[dddrrr]&&&\Rhmod \ar@{->}[ddd]^{E}&\\
%&&&&\\
%&&&&\modF \ar[dl]^{j^*} \\
%&&&\modE &
%\enddiagram$$

The functors $E$ and $F$ include implicit restrictions $\Rhmod \lra
\Rmod$, $\Shmod \lra \Smod$, which are the identity if we assume $R$
and $S$ are complete. 

We first deal with the composites.

\begin{lem}
We have a natural isomorphism
$$F\Fb Y\simeq Y$$
for $\cF$-modules $Y$ and a natural equivalence
$$E\Eb X\simeq \Hom_{\cE}(\kdr \tensor_R k, X)$$
for $\cE$-modules $X$. When $\kdr \simeq \kde$ this is completion. 
\end{lem}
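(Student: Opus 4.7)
The plan is to obtain each identity from tensor-hom adjunction, the only nontrivial input being smallness of $k$ over $S$.

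For $F\Fb Y \simeq Y$, I would unwind (with the restriction along $S \to \Sh$ kept tacit) to
$$F\Fb Y = \Hom_S(k, \Hom_\cF(\kds, Y)).$$
Tensor-hom adjunction, via the natural $S$-action on $\kds = \Hom_S(k, S)$, rewrites this as $\Hom_\cF(\kds \tensor_S k, Y)$, and the proof reduces to identifying $\kds \tensor_S k$ with $\cF$. This is a standard consequence of $k$ being small over $S$ (built into our g-regularity hypothesis on $S$): for $M$ small over $S$ and any $N$ the canonical map $\Hom_S(M,S) \tensor_S N \to \Hom_S(M, N)$ is an equivalence, and $M = N = k$ gives $\kds \tensor_S k \simeq \Hom_S(k,k) = \cF$. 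Then $\Hom_\cF(\cF, Y) \simeq Y$ as required.

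For $E\Eb X \simeq \Hom_\cE(\kdr \tensor_R k, X)$, the same tensor-hom adjunction applied to
$$E\Eb X = \Hom_R(k, \Hom_\cE(\kdr, X))$$
yields the claim directly, using that $\kdr = \Hom_R(k, R)$ inherits a natural $R$-action from $R$ and a $\cE$-action via $k$. No further simplification is available in general.

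For the completion interpretation, I would invoke the setting of Corollary \ref{rem:MoritaGorenstein}: $R$ is complete and Gorenstein so that $\kdr \simeq \kde$, and moreover the completion of $R$ is identified with $\Hom_\cE(k,k)$. Under this identification the $R$-action on $\kde$ matches its native action as a module over $\Hom_\cE(k,k)$, and $\Hom_\cE(\kde \tensor_R k, X)$ coincides with the recipe of Section \ref{sec:completion} applied one step further along the Koszul ladder, namely to $X \in \modE$ with distinguished module $k$. The main obstacle, such as it is, is precisely this last verification: one must check that the $R$-action on $\kde$ transported from $\kdr$ agrees with the one induced by the Koszul-dual identification $R \simeq \Hom_\cE(k,k)$, so that the formula really is the completion functor for $\cE$-modules rather than something merely resembling it. The two equivalences themselves are routine applications of tensor-hom, and all substantive inputs (smallness of $k$ over $S$; the Gorenstein/completeness hypotheses for the final clause) are already in force.
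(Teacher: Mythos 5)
Your proposal is correct and matches the paper's argument: both cases follow from tensor--hom adjunction, and the identity $F\Fb Y\simeq Y$ then reduces to the equivalence $\kds\tensor_S k\stackrel{\simeq}{\lra}\Hom_S(k,k)=\cF$, which holds precisely because $k$ is small over $S$ (g-regularity of $S$), while the corresponding map for $R$ need not be an equivalence. Your extra care about the module structures in the completion clause is a reasonable addition; the paper's proof does not address that clause explicitly.
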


\begin{proof}
We calculate directly that
$$E\Eb X =\Hom_R(k, \Hom_{\cE}(\kdr, X))\simeq \Hom_{\cE}(\kdr
\tensor_R k, X), $$
and similarly for $F\Fb Y$. We note that there is
always a natural map 
$$\kdr \tensor_Rk =\Hom_R(k, R)\tensor_Rk \lra \Hom_R(k,k)=\cE , $$
but we only know it is an equivalence if $k$ is small over $R$. Since
$S$ is g-regular, the corresponding map is an equivalence for $S$ which shows $F\Fb Y \simeq Y$.
\end{proof}

%The two triangular panels at the front and back commute by Lemma
%\ref{lem:CEisLambda}. The bottom face commutes by Remark
%\ref{rem:comp} (i). This leaves the top face and the right hand end. 
\subsection{The top left square}
The next relation is straightforward.

\begin{lem}
\label{lem:fourthcomm}
Assume that $R$ and $S$ are Gorenstein and $S\lra R$ is relatively
Gorenstein of shift $a_q$.

For any $S$-module $N$ we have
a natural equivalence
$$j^*F N \simeq \Sigma^{a_q}Eq_*N. $$
\end{lem}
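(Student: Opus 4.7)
The plan is to establish the equivalence by a short, natural chain of $\mcE$-equivariant maps, using three ingredients: the smallness of $R$ over $S$ (which is part of the normalization data), the relative Gorenstein identification $\Hom_S(R,S) \simeq \Sigma^{a_q}R$, and the standard restriction-coextension adjunction along $q\colon S \to R$.

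First I would unpack the two sides. The left-hand side $j^*FN$ is the spectrum $\Hom_S(k,N)$, equipped with its $\mcE$-action via $j\colon \mcE \to \mcF$. The right-hand side $E q_* N = \Hom_R(k, R\otimes_S N)$ carries its $\mcE$-action by precomposition on $k$. Both actions arise from composition on the source copy of $k$, so any equivalence built from natural transformations that are $k$-natural on the source will be $\mcE$-equivariant.

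The main computation runs as follows. Since $R$ is small over $S$, checking first at $N = S$ and extending over $\thick_S(S)$ shows that the evaluation map
\begin{equation*}
\Hom_S(R,S)\otimes_S N \lra \Hom_S(R,N)
\end{equation*}
is an equivalence. The relative Gorenstein hypothesis then gives
\begin{equation*}
R \otimes_S N \;\simeq\; \Sigma^{-a_q}\Hom_S(R,S)\otimes_S N \;\simeq\; \Sigma^{-a_q}\Hom_S(R,N).
\end{equation*}
Applying $\Hom_R(k,-)$ and using the adjunction $\Hom_R(k,\Hom_S(R,-)) \simeq \Hom_S(k\otimes_R R, -) = \Hom_S(k,-)$ yields
\begin{equation*}
E q_* N \;=\; \Hom_R(k, R\otimes_S N) \;\simeq\; \Sigma^{-a_q}\Hom_R(k,\Hom_S(R,N)) \;\simeq\; \Sigma^{-a_q}\Hom_S(k,N) \;=\; \Sigma^{-a_q}j^*FN,
\end{equation*}
and rearranging gives the claimed equivalence $j^*FN \simeq \Sigma^{a_q}E q_* N$.

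The only real subtlety is to verify that this chain is $\mcE$-equivariant rather than a mere equivalence of underlying spectra. Since each map in the chain (the duality equivalence, the shift coming from relative Gorenstein, and the Hom-tensor adjunction) is natural in the source $k$, it commutes with the composition action of $\mcE=\Hom_R(k,k)$; this is a routine check rather than a genuine obstacle. If anything, the main place to be careful is the relative Gorenstein step, where one must note that the equivalence $\Hom_S(R,S) \simeq \Sigma^{a_q}R$ is $R$-linear on the right, so the tensor product with $N$ still produces an $R$-module map and the subsequent application of $\Hom_R(k,-)$ is legitimate.
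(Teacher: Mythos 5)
Your argument is correct and is essentially the paper's own proof: both identify $j^*FN$ with $\Hom_R(k,\Hom_S(R,N))$ via the restriction--coextension adjunction and then use smallness of $R$ over $S$ together with $\Hom_S(R,S)\simeq \Sigma^{a_q}R$ to rewrite $\Hom_S(R,N)\simeq \Sigma^{a_q}R\otimes_S N$. The only slip is cosmetic: the thick-subcategory induction for the evaluation equivalence $\Hom_S(R,S)\otimes_S N\to\Hom_S(R,N)$ should run over $R\in\thick_S(S)$ (for fixed $N$), not over $N$.
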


\begin{proof}
We have 
$$Eq_*N=\HomR (k,R\tensor_S N). $$
On the other hand 
$$j^*FN =j^*\HomS(k,N)\simeq \HomR(k, \HomS(R,N)). $$
The relation then follows since $R$ is small over $S$, so that 
$$\HomS(R,N)\simeq \HomS(R,S)\tensor_SN\simeq
\Sigma^{a_q}R\tensor_S N.$$
\end{proof}

\subsection{The top right square}
The final square is a little trickier. 

\begin{lem}
\label{lem:thirdcomm}
Assume that $R$ and $S$ are Gorenstein and $S\lra R$ is relatively
Gorenstein of shift $a_q$.

For any $\cF$-module $Y$ we have
a natural equivalence
$$\qh_*\Fb Y \stackrel{\simeq}\lra \Sigma^{-a_q}\Eb (j^*Y). $$
\end{lem}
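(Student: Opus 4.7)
The plan is to parallel the proof of Proposition~\ref{prop:rhend}, using the relative Gorenstein condition on $q$ to strip shifts from both sides and then applying the adjunction $j_*\dashv j^*$. First, I would rewrite the right-hand side: from the proof of Proposition~\ref{prop:rhend}, the relative Gorenstein condition gives $\kdr \simeq \Sigma^{-a_q} j^*\kds$ as $\cE$-modules, so the outer and inner shifts cancel to give
$$\Sigma^{-a_q}\Eb j^* Y \simeq \Hom_\cE(j^*\kds,\, j^* Y),$$
and the adjunction $j_*\dashv j^*$ then identifies this with $\Hom_\cF(\kds \otimes_\cE \cF,\, Y)$. Applying the same manipulation to $\Rh = \Hom_\cE(\kdr, \kdr)$ yields $\Rh \simeq \Hom_\cF(\kds \otimes_\cE \cF,\, \kds)$, so the left-hand side becomes
$$\qh_* \Fb Y \;=\; \Hom_\cF(\kds,\, Y) \otimes_\Sh \Hom_\cF(\kds \otimes_\cE \cF,\, \kds).$$

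The proof would then reduce to the Morita identity
$$\Hom_\cF(\kds,\, Y) \otimes_\Sh \Hom_\cF(N,\, \kds) \stackrel{\simeq}{\lra} \Hom_\cF(N,\, Y),$$
valid for any $N \in \thick_\cF(\kds)$, applied with $N = \kds \otimes_\cE \cF$. This general identity is a standard dévissage: both sides are contravariant cohomological functors of $N$ that tautologically agree at $N = \kds$ and are preserved by triangles and retracts. The required membership $\kds \otimes_\cE \cF \in \thick_\cF(\kds)$ comes from Lemma~\ref{lem:MoritaRsmall}: knowing $\cE \finbuilds \cF$ as right $\cE$-modules, one transfers this build-relation along $\kds \otimes_\cE -$ to the relation $\kds = \kds \otimes_\cE \cE \finbuilds \kds \otimes_\cE \cF$ in $\modF$.

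I expect the main obstacle to be precisely this last transfer: the tensor $\kds \otimes_\cE -$ naturally sends an $(\cE, \cF)$-bimodule to a right $\cF$-module, so the right $\cF$-structure on $\kds \otimes_\cE \cE$ requires regarding $\cE$ as an $(\cE, \cF)$-bimodule via the ring map $j$. A cleaner alternative is to combine the projection formula $\kds \otimes_\cE \cF \simeq \Sigma^{a_q}\Hom_\cE(\cF,\, \kds)$ (which follows from $\cF$ being small over $\cE$ together with the relatively Gorenstein shift $\Hom_\cE(\cF, \cE) \simeq \Sigma^{-a_q}\cF$ provided by Lemma~\ref{lem:jrelGor}) with the contravariant application of $\Hom_\cE(-,\kds)$ to $\cE \finbuilds \cF$; this exhibits $\kds \otimes_\cE \cF$ directly as a shifted object of $\thick_\cF(\kds)$.
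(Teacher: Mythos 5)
Your reduction matches the paper's: after cancelling the Gorenstein shifts, both sides become $\Hom$'s over $\cF$ out of $N := j^*\kds\tensor_\cE\cF$, and everything hinges on showing $\kds\finbuilds N$ \emph{in} $\modF$, so that the composition map $\Hom_\cF(N,\kds)\tensor_{\Sh}\Hom_\cF(\kds,Y)\to\Hom_\cF(N,Y)$ is an equivalence by d\'evissage from the tautological case $N=\kds$. The gap is exactly at this membership statement, and neither of your routes closes it. Route 1 fails for the reason you yourself flag, and the repair you suggest is not available: $j$ runs $\cE\to\cF$, so restriction turns $\cF$-modules into $\cE$-modules and there is no way to ``regard $\cE$ as an $(\cE,\cF)$-bimodule via $j$'' --- $\cE$ carries no $\cF$-action at all. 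Route 2 has the same defect in disguise: applying $\Hom_\cE(-,j^*\kds)$ to $\cE\finbuilds\cF$ produces a building relation whose source object is $\Hom_\cE(\cE,j^*\kds)\simeq j^*\kds$, again only an $\cE$-module; the conclusion is $j^*\kds\finbuilds\Hom_\cE(\cF,j^*\kds)$ in $\modE$ (or in spectra), not $\kds\finbuilds N$ in $\modF$, which is what the d\'evissage over $\thick_\cF(\kds)$ actually requires.

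The paper supplies the missing idea by a different mechanism: it identifies $\kdr\tensor_\cE\cF$ with $F(R)=\Hom_S(k,R)$ via
$$\Hom_R(k,R)\tensor_\cE\Hom_R(R\tensor_Sk,k)\simeq\Hom_R(R\tensor_Sk,R)\simeq\Hom_S(k,R),$$
where the first equivalence is the evaluation argument applied to $R\tensor_Sk\in\Thick_R(k)$ (Lemma~\ref{lem:proxy}) and the second is adjunction along $q$. Since $F\colon\Smod\to\modF$ is a triangulated functor, $S\finbuilds R$ transports to $\kds=F(S)\finbuilds F(R)\simeq\kdr\tensor_\cE\cF$, a genuine building relation in $\modF$; the shift relating $\kdr$ to $j^*\kds$ is irrelevant for building, so this is exactly the required membership of $N$ in $\thick_\cF(\kds)$. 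You would need this identification, or some substitute that produces the build over $\cF$ rather than over $\cE$, to complete your argument.
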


\begin{proof}
First, we note that since $q$ is relatively Gorenstein, $j^*\kds
\simeq \Sigma^{a_q}\kdr$:
$$\kds =\Hom_S(k,S)\simeq \Hom_R(k , \Hom_S(R,S))\simeq
\Sigma^{a_q}\kdr .$$
In particular
$$\Rh  =\Hom_{\cE}(\kdr, \kdr) \simeq \Hom_{\cE}(j^*\kds, j^*\kds).$$

Thus, we find
\begin{align*}
\qh_*\Fb Y &=\Hom_{\cE} (j^*\kds, j^*\kds)\tensor_{\Sh}
\Hom_{\cF}(\kds, Y) \\ 
&\simeq \Hom_{\cF} (j^*\kds \tensor_{\cE} \cF, \kds)\tensor_{\Sh}
\Hom_{\cF}(\kds, Y).
\end{align*}
There is a natural evaluation map to 
$$\Hom_{\cF}(j^*\kds \tensor_{\cE}\cF, Y)\simeq
\Hom_{\cE}(\Sigma^{a_q}\kdr , j^*Y)\simeq \Sigma^{-a_q}\Eb j^*Y.$$

As in the proof of Lemma~\ref{lem:topface} it suffices to show that $\kds \finbuilds \kdr\tensor_{\cE}\cF$. 
Since $\kds =FS$ it suffices to show that $\kdr \tensor_{\cE} \cF$ is
the image of a small $S$-module under $F$, and in fact we show it is $F(R)$.

For this (recalling from Lemma~\ref{lem:proxy} that $k\finbuilds R\tensor_Sk$ for the third
equivalence), we compute that 
$$\begin{array}{rcl}
\kdr \tensor_{\cE}\cF 
&=& \Hom_R(k,R)\tensor_{\cE} \Hom_S(k,k) \\
&\simeq & \Hom_R(k,R)\tensor_{\cE} \Hom_R(R\tensor_Sk,k) \\
&\simeq & \Hom_R(R\tensor_Sk,R)\\
&\simeq & \Hom_S(k,R)\\
&=& F(R).
\end{array}$$
\end{proof}

\subsection{The symmetric counterparts}
\label{subsec:commutationsummary}
We have so far shown that the top two squares in the two diagrams
commute. In other words, we have  established
four relations: 

\begin{align*}
&Fq^*M \simeq j_*E M \\
&\Fb j_*Y \simeq \qh^*\Eb Y\\
&\Eb j^*Z \simeq \Sigma^{a_{S}-a_{R}}\qh_*\Fb Z \\
&j^*FN\simeq \Sigma^{a_S-a_R}Eq_* N
\end{align*}
In the symmetric context we obtain some more by replacing $S\lra R$ by
$\mcD \lra \mcE$ (and hence  $\mcF \lla \mcE$  by $Q\lla R$). 

In giving the symmetric relations, we need to bear in mind that
$EM=\Hom_R(k,M)$ corresponds to 
$$\Eb'Y=\Hom_{\cE}(k, Y)\simeq
\Sigma^{a_R}\Hom_{\cE}(\kdr , Y)=\Sigma^{a_R}\Eb Y$$
and 
$\Eb Y=\Hom_{\cE} (\kdr ,Y)$ corresponds to 
$$E' M=\Hom_{R}(\kde, M)\simeq
\Sigma^{-a_{\cE}}\Hom_{R}(k , M)=\Sigma^{-a_{\cE}}EM. $$
This allows us to establish the commutation of the lower two squares in the two
diagrams,  expressed as  equations in the following lemma.

\begin{lem}\label{lem_isoms}
In a Symmetric Gorenstein Context,  there are natural isomorphisms for $X\in \Modu \mcE$, $Y\in \Modu Q$, $M\in \Modu R$, and $N\in \Modu \mcD$
\begin{align*}
&\Sigma^{a_{\mcD}}\Db i^*X \simeq \Sigma^{a_{\mcE}}\ph_*\Eb X \\
&\Sigma^{-a_{\mcD}}Dp_*M \simeq \Sigma^{-a_{\mcE}}i^*EM \\
&Ep^*L \simeq i_*D L \\
&\ph^*\Db N\simeq \Eb i_* N
\end{align*}
\end{lem}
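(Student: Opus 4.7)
Plan:

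The four isomorphisms $(1)$--$(4)$ are the symmetric counterparts of the four identities $(A)$--$(D)$ established in Subsection~\ref{subsec:commutationsummary}, obtained by applying those identities to the cofibre sequence $\mcD \stackrel{i}{\to} \cE \stackrel{j}{\to} \cF$ playing the role of $S \stackrel{q}{\to} R \stackrel{p}{\to} Q$, and then translating back. By Proposition~\ref{prop:fullsymmetric}, this second sequence is itself a strongly Gorenstein normalization satisfying all requisite hypotheses: $\mcD$ is g-regular, both $\cE$ and $k$ are small over $\mcD$, and $i, j$ are relatively Gorenstein (with shifts $-a_p$ and $-a_q$ respectively, by Lemmas~\ref{lem:irelGor} and \ref{lem:jrelGor}). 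Hence the arguments of Lemma~\ref{lem:topface}, Proposition~\ref{prop:rhend}, and Lemmas~\ref{lem:fourthcomm}, \ref{lem:thirdcomm} go through verbatim in this symmetric setting, producing four relations among the Morita functors of the second sequence.

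Translating back to the original six functors uses the dictionary recorded just above the lemma statement: $E'M = \Sigma^{-a_\cE}EM$ and $\Eb'Y = \Sigma^{a_R}\Eb Y$, together with the parallel formulas coming from the Gorenstein shifts $\kds \simeq \Sigma^{a_S}k$ and $\kdq \simeq \Sigma^{a_Q}k$. Substituting these into each of the four transposed identities, and using Lemma~\ref{lem:relGorascdesc} (in particular the cancellation $a_\mcD - a_\cE = -a_p = a_Q - a_R$), yields precisely the four statements of the lemma: $(A) \to (1)$, $(B) \to (2)$, $(C) \to (3)$, $(D) \to (4)$, with the shifts of the original relations either absorbed into or generated by the dictionary.

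If one prefers a direct verification, each identity may also be proved by mimicking its predecessor. For $(3)$, expand $i_*DL = \Hom_Q(k, L)\tensor_\mcD \cE$, identify $\cE = \Hom_R(k,k) \simeq \Hom_Q(Q\tensor_R k, k)$ via the $(p_*, p^*)$-adjunction, and apply the natural evaluation map $\Hom_Q(k, L)\tensor_\mcD \Hom_Q(T, k) \to \Hom_Q(T, L)$, which is an equivalence for any $T$ finitely built by $k$. This applies to $T = Q\tensor_R k$ because $R \finbuilds Q$ (Proposition~\ref{prop:halfsymmetric}), so that $k = R\tensor_R k \finbuilds Q\tensor_R k$; a final adjunction then gives $Ep^*L$. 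The direct proofs of $(1), (2), (4)$ mimic Lemmas~\ref{lem:thirdcomm}, \ref{lem:fourthcomm} and Proposition~\ref{prop:rhend} respectively, and use the shift identification $i^*\kdr \simeq \Sigma^{a_p}\kdq$ (a formal consequence of Lemma~\ref{lem:irelGor}) together with smallness of $\cE$ over $\mcD$ (Proposition~\ref{prop:fullsymmetric}).

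The main obstacle is the careful bookkeeping of the Gorenstein shifts in $(1)$ and $(2)$: one must check that the shifts introduced by the dictionary combine with $a_i = -a_p$, $a_j = -a_q$ and the relations $a_\cE + a_i = a_\mcD$, $a_Q + a_p = a_R$ from Lemma~\ref{lem:relGorascdesc} to produce precisely $\Sigma^{a_\mcD}$ and $\Sigma^{a_\cE}$ on the two sides of the stated equations. Once this cancellation is tracked, the remaining content is a routine application of adjunctions together with the smallness assumptions built into the Symmetric Gorenstein Context.
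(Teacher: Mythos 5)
Your proposal is correct and follows essentially the same route as the paper: apply the four commutation relations of Subsection~\ref{subsec:commutationsummary} to the Koszul-dual cofibre sequence $\mcD\to\cE\to\cF$ (legitimized by the symmetry of the Symmetric Gorenstein Context via Proposition~\ref{prop:fullsymmetric}), then translate back using the dictionary $E'M\simeq\Sigma^{-a_\cE}EM$, $\Eb'Y\simeq\Sigma^{a_R}\Eb Y$ and the shift relations from Lemma~\ref{lem:relGorascdesc}. Your bookkeeping ($a_\mcD-a_\cE=-a_p=a_Q-a_R$) is equivalent to the paper's observation that Morita counterparts have equal shifts together with Gorenstein ascent, so nothing is missing.
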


\begin{proof}
Applying Lemma \ref{lem:topface}, Proposition
\ref{prop:rhend},  Lemma \ref{lem:thirdcomm} and  Lemma \ref{lem:fourthcomm} 
to the Morita
counterparts, we obtain
\begin{align*}
&\Db' i^*X \simeq \ph_*\Eb' X \\
&D'p_*M \simeq i^*E'M \\
&E'p^*L \simeq \Sigma^{a_{\mcD}-a_{\mcE}}i_*D' L \\
&\ph^*\Db' N\simeq \Sigma^{a_{\mcD}-a_{\mcE}}\Eb' i_* N
\end{align*}
Inserting appropriate suspensions, recalling that Morita counterparts
have the same shift (i.e., $a_R=a_{\cE}$ etc), and that Gorenstein
ascent gives $a_R=a_S+a_Q$, we obtain the stated results. 
\end{proof}

\section{Morita equivalences and singularity categories}\label{sec:duality}

We have now introduced all the apparatus necessary to prove our main
result, which gives an equivalence of the bounded derived categories of Morita 
counterparts occuring in a Symmetric Gorenstein Context. As a consequence 
we can describe how singularity categories behave under
Morita equivalence (or Koszul duality if the reader prefers). 

\subsection{An equivalence of bounded derived categories}

Let us suppose we are given a Symmetric Gorenstein Context (see Definition~\ref{defn:sgc}, and see Section~\ref{sec:6functors} for the relevant functors) consisting of cofibre sequences
\begin{displaymath}
S \stackrel{q}{\lra} R \stackrel{p}{\lra} Q
\end{displaymath}
and
\begin{displaymath}
\cF \stackrel{j}{\lla} \cE \stackrel{i}{\lla} \mcD
\end{displaymath}
where $R,S,\cE,$ and $\mcD$ are assumed complete. We have defined analogues of the bounded derived category for $R$ and $\cE$, namely
\begin{displaymath}
\sfD^{q\mathrm{-b}}(R)=\DRfbbS \quad \text{and} \quad \sfD^{i\mathrm{-b}}(\cE) = \DEfbbD
\end{displaymath}
and seen in Corollary~\ref{cor:Gorfg} that in fact under mild
hypotheses (see Proposition~\ref{prop:invariance})  these subcategories
do not depend on the chosen normalizations. 

In this section we prove our main theorem:

\begin{thm}\label{thm_realmain}
Suppose we are given a Symmetric Gorenstein Context as above with $R,
S, \mcD$ and $\cE$ complete. Then
\begin{displaymath}
E = \Hom_R(k,-)\colon \Rmod \lra \modE
\end{displaymath}
and
\begin{displaymath}
 \Eb = \Hom_\cE(\kdr,-)\colon \modE \to \Rmod
\end{displaymath}
restrict to quasi-inverse equivalences
\begin{displaymath}
\sfD^{q\mathrm{-b}}(R)=\DRfbbS \simeq \DEfbbD =\sfD^{i-b}(\cE).
\end{displaymath}
\end{thm}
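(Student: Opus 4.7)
My plan organizes the proof around three ingredients: (a) the completion identity $\Eb EM \simeq M$ for $M \in \sfD^{q\mathrm{-b}}(R)$; (b) its symmetric counterpart $E\Eb X \simeq X$ for $X \in \sfD^{i\mathrm{-b}}(\cE)$; and (c) that $E$ and $\Eb$ carry the respective bounded derived categories into each other. The Symmetric Gorenstein Context is symmetric in the two cofibre sequences $S \to R \to Q$ and $\mcD \to \cE \to \cF$, so (b) follows from (a), and the $\Eb$-half of (c) from the $E$-half, by swapping the roles of the two sequences; Subsection~\ref{subsec:commutationsummary} records that the ``primed'' Morita functors arising from the swap agree with $E$ and $\Eb$ up to explicit suspensions.

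For (a), take $M \in \sfD^{q\mathrm{-b}}(R)$, so that $S \finbuilds q^*M$. Since $S$ is complete by hypothesis, the thick subcategory of $k$-complete $S$-modules contains $S$ and hence every $S$-small object; in particular $q^*M$ is $S$-complete. By Remark~\ref{rem:comp} completion commutes with restriction along $q$, giving
\begin{displaymath}
q^*\Lambda^R M \simeq \Lambda^S q^*M \simeq q^*M.
\end{displaymath}
Conservativity of $q^*$ then forces the completion unit $M \to \Lambda^R M$ to be an equivalence, and combining with the identification $\Eb EM \simeq \Lambda_k M$ from Section~\ref{sec:completion} yields $M \simeq \Eb EM$.

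For (c), given $M \in \sfD^{q\mathrm{-b}}(R)$ I would use the commutation relation $j_*EM \simeq Fq^*M$ of Lemma~\ref{lem:topface}. Since $q^*M \in \thick_S(S)$ and $FS \simeq \Sigma^{a_S}k$ by the Gorenstein hypothesis on $S$, applying the triangulated functor $F$ yields $j_*EM \in \thick_\cF(k)$. Restricting along the composite $\mcD \to \cE \to \cF$ and using $g$-regularity of $\mcD$ (so $\mcD \finbuilds k$) shows that $j_*EM$ is $\mcD$-small. The identification $\cF \simeq \cE \otimes_\mcD k$ from Lemma~\ref{lem:MoritaRsmall} rewrites $j_*EM \simeq i^*EM \otimes_\mcD k$ as $\mcD$-modules, so that $i^*EM \otimes_\mcD k$ is $\mcD$-small; a Nakayama-type reflection available in the complete $g$-regular setting then upgrades this to $\mcD$-smallness of $i^*EM$ itself, i.e.\ $EM \in \sfD^{i\mathrm{-b}}(\cE)$.

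The main obstacle is this final Nakayama-type step in (c): deducing smallness of $i^*EM$ from smallness of $i^*EM \otimes_\mcD k$. It is in the same spirit as (a), relying on completeness together with the $g$-regular Gorenstein structure of $\mcD$. Everything else is formal manipulation of the commutation relations of Section~\ref{sec:cr} combined with the completeness hypotheses, and the four statements (a), (b), and the two halves of (c) then assemble into the claimed quasi-inverse equivalence $\sfD^{q\mathrm{-b}}(R) \simeq \sfD^{i\mathrm{-b}}(\cE)$.
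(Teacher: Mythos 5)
Your steps (a) and (b) are essentially the paper's argument for the composites: completeness of $S$ forces $q^*M$ to be complete, completion commutes with restriction, and $q^*$ reflects equivalences; the $E\Eb$ direction is handled by the symmetry of the context together with the Gorenstein shift bookkeeping. That part is fine.

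The genuine gap is in step (c). Your route computes $j_*EM \simeq Fq^*M \in \thick_{\cF}(k)$ and identifies $j_*EM$ with $i^*EM\otimes_{\mcD}k$, so what you actually establish is smallness of the \emph{base change} of $i^*EM$ along $\mcD\to k$, not of $i^*EM$ itself. The ``Nakayama-type reflection'' you invoke to descend smallness along $-\otimes_{\mcD}k$ is false in general (e.g.\ $\QQ_p\otimes_{\ZZ_p}\FF_p=0$ is small while $\QQ_p$ is not small over $\ZZ_p$); it requires knowing in advance that $i^*EM$ is $k$-complete, or $k$-cellular, over $\mcD$, which is not among the hypotheses you have verified and is roughly as hard as the statement you are trying to prove. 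You flag this as the main obstacle but supply no argument, so the proof is incomplete at exactly the load-bearing point. The paper avoids the issue entirely by routing through the \emph{other} leg of the cofibre sequence: from $S\vDash q^*M$ one applies $k\otimes_S-$ to get $k\vDash Q\otimes_RM=p_*M$, and then applies $D=\Hom_Q(k,-)$ to get $\mcD=Dk\vDash Dp_*M$, which the commutation relation of Theorem~\ref{thm:relations} identifies (up to suspension) with $i^*EM$. This lands directly on $i^*EM$ with no descent step needed. To repair your argument, replace the passage through $j_*EM$ by this passage through $p_*M$ and the relation $\Sigma^{-a_{\mcD}}Dp_*M\simeq\Sigma^{-a_{\cE}}i^*EM$.
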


The first matter of business is to check that $E$ and $\Eb$ both restrict to functors between the bounded derived categories. We will state the necessary lemmas for both cofibre sequences, but we will only prove them for the one involving $S, R$, and $Q$; in all cases the proofs are, \emph{mutatis mutandis}, the same.

\begin{lem}\label{lem_finiteness1}
Let $M$ be an $R$-module such that $q^*M$ is small over $S$. Then $p_*M$ is finitely built by $k$. Similarly if $X$ is an $\mcE$-module such that $i^*X$ is small over $\mcD$, then $j_*X$ is finitely built by $k$.
\end{lem}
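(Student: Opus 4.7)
The idea is to find a triangulated functor $\sfD(S)\to \sfD(Q)$ that sends the unit $S$ to $k$ and sends $q^*M$ to $p_*M$; the lemma is then immediate, because triangulated functors carry thick subcategories into the thick subcategory generated by the image.

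Using $Q = R\otimes_S k$ and associativity of the derived tensor product, one has
\begin{equation*}
p_* M \;=\; Q\otimes_R M \;\simeq\; (R\otimes_S k)\otimes_R M \;\simeq\; k\otimes_S q^*M
\end{equation*}
naturally as $Q$-modules. The $Q$-module structure on the right-hand side is induced by the augmentation $Q\to k$, which makes $k$ an $(S,Q)$-bimodule: the left $S$-action and right $Q$-action commute, since both factor through multiplication on the field $k$. Consequently $F := k\otimes_S -$ is a triangulated functor $\sfD(S)\to \sfD(Q)$ with $F(S)\simeq k$, so $F$ restricts to a functor $\Thick_S(S)\to \Thick_Q(k)$. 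The hypothesis that $q^*M$ is small over $S$ is precisely $q^*M\in \Thick_S(S)$, and therefore $p_*M\simeq F(q^*M)\in \Thick_Q(k)$, which is the claim.

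The second assertion is proved in exactly the same way, applied to the Koszul dual cofibre sequence $\mcD\stackrel{i}{\to}\cE\stackrel{j}{\to}\cF$, which is again a normalization-type cofibre sequence by Lemma~\ref{lem:MoritaRsmall} (in particular $\cF\simeq \cE\otimes_\mcD k$). Substituting $(\mcD,\cE,\cF,i,j,X)$ for $(S,R,Q,q,p,M)$ throughout yields $j_*X\simeq k\otimes_\mcD i^*X$, so the hypothesis $i^*X\in\Thick_\mcD(\mcD)$ gives $j_*X\in \Thick_\cF(k)$.

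The only point requiring real care is verifying that the equivalence $p_*M\simeq k\otimes_S q^*M$ is an equivalence of right $Q$-modules, not merely of underlying spectra: one must check that the right $Q$-action on $Q$ appearing in $Q\otimes_R M$ agrees, under $Q\simeq R\otimes_S k$, with the augmentation $Q$-action on $k$ on the other side. This is routine bookkeeping with the bimodule structure on $k$ and is the sole nontrivial step; the remainder of the argument is formal triangulated-categorical machinery.
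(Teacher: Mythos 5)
Your proof is correct and is essentially identical to the paper's: both apply the triangulated functor $k\otimes_S-$ to the relation $q^*M\in\Thick_S(S)$ and use the base-change identification $k\otimes_S q^*M\simeq Q\otimes_R M=p_*M$. The only difference is that you make explicit the bimodule bookkeeping for the $Q$-module structure, which the paper leaves implicit.
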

\begin{proof}
Suppose $M$ is as given. Then we have
\begin{displaymath}
(S \vDash q^*M) \Rightarrow (k \simeq k\otimes_S S \vDash k\otimes_S q^*M \simeq Q\otimes_R M = p_*M).
\end{displaymath}
\end{proof}

\begin{lem}\label{lem_finiteness2}
Let $M$ be an $R$-module such that $p_*M$ is finitely built by $k$. Then $i^*EM$ is small over $\mcD$. Similarly, if $X$ is an $\mcE$-module such that $j_*X$ is finitely built by $k$ then $q^*\Eb X $ is small over $S$. 
\end{lem}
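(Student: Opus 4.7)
The plan is to deduce this from the commutation relations established in Lemma~\ref{lem_isoms}, which give the key identification
\begin{displaymath}
\Sigma^{-a_{\mcD}} D p_* M \;\simeq\; \Sigma^{-a_{\mcE}} i^* E M
\end{displaymath}
that moves between the Koszul dual sides without changing what one needs to prove up to a suspension shift. This reduces the first assertion to establishing that $D(p_*M) = \Hom_Q(k, p_*M)$ is small over $\mcD$.

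First I would take the hypothesis $k \finbuilds p_* M$ in $\Qmod$ and apply the functor $D = \Hom_Q(k,-)$. Since $D$ is an exact functor of triangulated categories and sends retracts to retracts, it preserves the finite-building relation, so from $k \finbuilds p_*M$ we conclude
\begin{displaymath}
\mcD \;=\; \Hom_Q(k,k) \;=\; D(k) \;\finbuilds\; D(p_*M)
\end{displaymath}
in $\modD$. Combining with the commutation relation above, which up to a suspension identifies $D p_* M$ with $i^* E M$, we get $\mcD \finbuilds i^* E M$, which is exactly the smallness statement we want.

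For the second assertion, about an $\cE$-module $X$ with $j_* X \finbuilds k$ in $\modF$, I would run the symmetric argument on the dual cofibre sequence $\cF \lla \cE \lla \mcD$ paired with $S \lra R \lra Q$. Specifically, I would invoke the symmetric counterpart of the same commutation relation (also recorded in Lemma~\ref{lem_isoms}, after swapping the roles of the two cofibre sequences as in Section~\ref{subsec:commutationsummary}) which relates $q^* \Eb X$ with $F j_* X$ up to a suspension, and then apply $F = \Hom_S(k,-)$ to the finite building $k \finbuilds j_*X$ to obtain $\cF = F(k) \finbuilds F(j_*X)$; since $S$ is $g$-regular, $\cF$ is finite dimensional and in particular $S\finbuilds \cF$, which then yields $S \finbuilds q^* \Eb X$ as desired.

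There is no real obstacle here: the content is entirely packaged in the commutation relations of Section~\ref{sec:cr} and in the elementary fact that applying a triangulated functor $F$ to a finite building $a \finbuilds b$ gives $F(a) \finbuilds F(b)$. The only subtle point is to make sure the commutation relation used is the correct one among the eight listed (and its symmetric partner), and to note that the suspension shifts appearing are irrelevant for smallness statements, which are closed under suspensions.
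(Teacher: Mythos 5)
Your argument for the first assertion is exactly the paper's: apply the exact functor $D=\Hom_Q(k,-)$ to $k\finbuilds p_*M$ to get $\mcD = Dk \finbuilds Dp_*M$, and then identify $Dp_*M$ with $i^*EM$ up to suspension via the commutation relation $\Sigma^{-a_{\mcD}}Dp_*M\simeq\Sigma^{-a_{\mcE}}i^*EM$. That part is correct and complete.

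The second assertion, however, is where your write-up goes wrong, and the error is concrete: you apply $F=\Hom_S(k,-)$ to $j_*X$, but $j_*X$ is an $\cF$-module, not an $S$-module, so $F(j_*X)$ is not defined; and there is no commutation relation in the paper relating $q^*\Eb X$ to ``$Fj_*X$''. The relation you need is Proposition~\ref{prop:rhend} (equivalently the second relation in Section~\ref{subsec:commutationsummary}), namely $q^*\Eb X\simeq \Fb j_*X$ where $\Fb=\Hom_{\cF}(\kds,-)$ is the \emph{other} Morita functor, going from $\cF$-modules back to ($\Sh$-, hence $S$-)modules. The correct mutatis mutandis argument applies $\Fb$ to $k\finbuilds j_*X$: this gives $\Fb(k)\finbuilds \Fb(j_*X)\simeq q^*\Eb X$, and $\Fb(k)=\Hom_{\cF}(\kds,k)\simeq\Sigma^{-a_S}\Hom_{\cF}(k,k)=\Sigma^{-a_S}\Sh\simeq\Sigma^{-a_S}S$ using that $S$ is Gorenstein and complete; hence $S\finbuilds q^*\Eb X$. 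Your detour through ``$\cF=F(k)\finbuilds F(j_*X)$'' and ``$S\finbuilds\cF$'' does not connect to $q^*\Eb X$ at all (even granting the type error, $F(j_*X)$ would not be the object you need to build), so as written the second half does not prove the claim. Minor additional point: you state the hypothesis once as ``$j_*X\finbuilds k$'', which reverses the building relation, though you later use the correct direction $k\finbuilds j_*X$.
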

\begin{proof}
Let $M$ be as in the statement. Then we have
\begin{displaymath}
(k\vDash p_*M) \Rightarrow (\mcD = Dk \vDash Dp_*M \simeq i^*EM)
\end{displaymath}
(up to suspensions which are irrelevant for statements about building), where the last isomorphism above is via Theorem~\ref{thm:relations}.
\end{proof}

Thus $E$ and $\Eb$ restrict to functors
\begin{displaymath}
\xymatrix{
\DRfbbS \ar[r]<0.5ex>^-E \ar@{<-}[r]<-0.5ex>_-{\Eb} & \DEfbbD
}.
\end{displaymath}

It just remains to check they are inverse to one another on these categories.

\begin{proof}[Proof of Theorem~\ref{thm_realmain}]
%As in the proof of \cite{GreenleesSing}*{Theorem~5.1} one sees
%$\EbE$ is the identity on $R$-modules $M$ with $q^*M$ small
%over $S$. 
Since $R$ is complete, the composite $\Eb E $ is the identity on $R$-modules $M$
with $q^*M$ small over $S$. Indeed, if 
$$\left( S\finbuilds q^*M\right) \Rightarrow \left( \Lambda S=\Fb F S \finbuilds
 \Fb F q^* M\simeq q^* \Eb E M\right) .$$ 
Since $S$ is complete $\Fb F S=\Lambda S\simeq  S$, so the above yields that $q^* \Eb E M$ is finitely built
by $S$. Completeness of $S$ also tells us that $\Fb F$ is equivalent to the identity on small $S$-modules. It follows that if we apply $q^*$ to the completion $M \lra
\Eb EM =\Lambda M$ then it is an equivalence. However $q^*$ reflects isomorphisms so $M\simeq \Eb EM$ as required. 

On the other hand suppose $X$ is an $\mcE$-module with $i^*X$ small over $\mcD$. In $\Modu \mcD$ we have
\begin{align*}
D\Db(\mcD) &= D(\Hom_\mcD(k^{\#Q}, \mcD)) \\
&\simeq \S^{-a_Q}D(k^{\#\mcD}) \\
&\simeq \S^{-a_Q + a_\mcD}D(k) \\
&\simeq \mcD
\end{align*}
where we have used $a_Q = a_\mcD$. Thus $D\Db$ is the
identity on objects finitely built by $\mcD$. By the analogue of Remark \ref{rem:comp}(i) or using the relations from Theorem~\ref{thm:relations} we see that restriction and completion commute for $X$ and so
\begin{displaymath}
i^*E\Eb X \to i^*X
\end{displaymath}
is an isomorphism. Since $i^*$ reflects isomorphisms this shows $E\Eb X \to X$ is already an isomorphism. Thus $E\Eb$ is isomorphic to the identity on $\DEfbbD$ and so we have the claimed equivalence
\begin{displaymath}
\DRfbbS \simeq \DEfbbD.
\end{displaymath}
\end{proof}

\subsection{Singularity and cosingularity categories}
Let us now formally introduce singularity and cosingularity categories and record the consequence of our theorem for their behaviour under Morita equivalence. 

The singularity category of an ordinary ring $R$  is designed to measure how
far $R$ is from being regular. It is defined as the
Verdier quotient of the bounded derived category by the complexes finitely built by $R$:
$$\sfD_\mathrm{sg}(R):= 
\frac{\sfD^\mathrm{b}(R)}{\sfD^\mathrm{c}(R)}. $$

\begin{defn}\label{defn:qsg}
Accordingly, for a potentially more exotic ring $R$ together with a normalization $S\stackrel{q}{\lra} R$, we define
$$\sfD_{q\mathrm{-sg}}(R):=
\frac{\sfD^{q\mathrm{-b}}(R)}{\sfD^\mathrm{c}(R)}=
\frac{\DRfbbS}{\DRfbbR}.$$
\end{defn}
Again this provides a measure of how far $R$ is from being g-regular, although this is made more subtle by the involvement of normalizations.

\begin{lem}\label{lem:singreg}
If there exists a normalization $S\stackrel{q}{\lra}R$ such that we have $\sfD_{q\mathrm{-sg}}(R) \simeq 0$ then $R$ is g-regular. On the other hand, if $R$ is g-regular and complete then for every relatively Gorenstein normalization $S\stackrel{q}{\lra} R$, such that $S$ is complete, we have $\sfD_{q\mathrm{-sg}}(R)\simeq 0$.
\end{lem}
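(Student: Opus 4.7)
The plan is to prove both implications by direct unwinding of the definitions, using that $k \in \sfD^{q\mathrm{-b}}(R)$ for the forward direction and invoking Proposition~\ref{prop:invariance} for the converse.

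For the first implication, suppose $\sfD_{q\mathrm{-sg}}(R) \simeq 0$ for some normalization $S\stackrel{q}{\lra}R$. Since $\sfD^\mathrm{c}(R)$ is a thick subcategory of $\sfD^{q\mathrm{-b}}(R)$, vanishing of the Verdier quotient is equivalent to the equality of subcategories $\sfD^{q\mathrm{-b}}(R) = \sfD^\mathrm{c}(R)$. By the definition of a normalization (Section~\ref{sec:Db}), $k$ is small over $S$, hence $k \in \sfD^{q\mathrm{-b}}(R)$. The equality then forces $k \in \sfD^\mathrm{c}(R)$, i.e.\ $R \finbuilds k$, which is precisely the definition of g-regularity of $R$.

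For the converse, assume $R$ is regular and complete, and let $S\stackrel{q}{\lra}R$ be any complete relatively Gorenstein normalization. The point is that all the hypotheses of Proposition~\ref{prop:invariance} are satisfied: $R$ is small over $S$ and $S$ is g-regular because $q$ is a normalization, $S$ is complete and $q$ is relatively Gorenstein by assumption, and $R$ is g-regular and complete by hypothesis. Hence Proposition~\ref{prop:invariance} gives that for any $R$-module $M$, smallness over $R$ is equivalent to smallness of $q^*M$ over $S$; that is, $\sfD^{q\mathrm{-b}}(R) = \sfD^\mathrm{c}(R)$, so the Verdier quotient $\sfD_{q\mathrm{-sg}}(R)$ is zero.

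No step here looks like a serious obstacle: the forward direction is essentially a tautology once one observes that $k$ is automatically $q$-finitely generated, while the converse is packaged into Proposition~\ref{prop:invariance}. The only mild care needed is to verify that the hypotheses of that proposition line up precisely with those imposed in the statement of the lemma (in particular, that ``complete relatively Gorenstein normalization'' supplies both the relative Gorenstein condition and completeness of $S$, while g-regularity of $S$ is built into the word ``normalization'').
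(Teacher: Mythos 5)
Your proof is correct and follows the paper's own argument: the forward direction uses exactly the observation that $k$ lies in $\sfD^{q\mathrm{-b}}(R)$ and so must be small when the quotient vanishes, and the converse is, as in the paper, a direct application of Proposition~\ref{prop:invariance} after checking its hypotheses. No issues.
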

\begin{proof}
First suppose there exists an $S\stackrel{q}{\lra}R$ such that $\sfD_{q\mathrm{-sg}}(R) \simeq 0$. Then, since $k$ is small over $S$, it certainly lies in $\DRfbbS$ and thus must be killed upon the passage to the singularity category. This says precisely that $k$ is small over $R$ i.e.\ $R$ is regular.

The second statement is a direct consequence of Proposition~\ref{prop:invariance}.
\end{proof}

Given that we work with augmented ring spectra it is 
natural to introduce the dual notion. 

\begin{defn}\label{defn:coregular}
We say $R$ is \emph{coregular} if it is finitely built from $k$ in the sense that
\begin{displaymath}
R\in \thick(k) \subseteq \sfD(R). 
\end{displaymath}
\end{defn}

We then define the cosingularity category to measure how far $R$
is from being coregular.

\begin{defn}\label{defn:qcsg}
The \emph{cosingularity category} of $R$ with respect to the normalization $q$ is
$$\sfD_{q\mathrm{-cosg}}(R):=
\frac{\sfD^{q\mathrm{-b}}(R)} 
{\DRfbbk}=
\frac{\DRfbbS}{\DRfbbk}. $$
\end{defn}
Again this idea of measuring can be made somewhat precise.

\begin{lem}
If there exists a normalization $S\stackrel{q}{\lra}R$ such that we
have $\sfD_{q\mathrm{-cosg}}(R) \simeq 0$ then $R$ is coregular.
\end{lem}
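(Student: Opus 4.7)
The plan is to exploit the fact that $R$ itself lies in the bounded derived category $\sfD^{q\mathrm{-b}}(R)$ for essentially trivial reasons, so that the vanishing of the quotient forces $R$ to be finitely built by $k$.

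First I would observe that $R \in \sfD^{q\mathrm{-b}}(R) = \DRfbbS$. This is immediate from the definition of a normalization: by hypothesis $q\colon S \lra R$ is a normalization, meaning $R$ is small over $S$, i.e.\ $q^*R \in \sfD^\mathrm{c}(S)$. Hence $R$ defines an object of $\sfD^{q\mathrm{-b}}(R)$.

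Next I would use the hypothesis $\sfD_{q\mathrm{-cosg}}(R) \simeq 0$. Since
\begin{displaymath}
\sfD_{q\mathrm{-cosg}}(R) = \frac{\sfD^{q\mathrm{-b}}(R)}{\Thick_R(k)},
\end{displaymath}
the vanishing of this Verdier quotient means that every object of $\sfD^{q\mathrm{-b}}(R)$ already lies in $\Thick_R(k)$. Applying this to the object $R$ itself, we conclude $R \in \Thick_R(k)$, equivalently $k \finbuilds R$, which is precisely the definition of coregularity.

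There is no real obstacle here: the argument is essentially a one-liner once one recognises that $R$ sits inside $\sfD^{q\mathrm{-b}}(R)$. The only subtlety worth pointing out in the write-up is the analogue with Lemma~\ref{lem:singreg}: in the singularity version one used that $k \in \sfD^{q\mathrm{-b}}(R)$ (which requires $k$ to be small over $S$, part of being a normalization), whereas in the cosingularity version one uses that $R \in \sfD^{q\mathrm{-b}}(R)$ (which requires $R$ to be small over $S$, the other half of being a normalization). So both results are perfectly dual and draw on the two defining properties of a normalization.
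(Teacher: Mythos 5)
Your argument is exactly the paper's proof: one notes that $R$ lies in $\sfD^{q\mathrm{-b}}(R)$ because $R$ is small over $S$, and vanishing of the Verdier quotient then forces $R\in\Thick_R(k)$, i.e.\ $k\finbuilds R$. Correct, and identical in approach to the paper.
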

\begin{proof}
If the cosingularity category vanishes then, since $R$ is an object of $\DRfbbS$, we see $k\finbuilds R$ i.e.\ $R$ is coregular.
\end{proof}

\begin{rem}
Inspired by noncommutative algebraic geometry, the cosingularity category could also be viewed as an analogue of 
the bounded derived category of coherent sheaves on the ``projective scheme'' 
associated to $R$, i.e.\ we might think in terms of an equation $\sfD^\mathrm{b} (\Proj(R)):=\sfD_\mathrm{cosg}(R)$. 
\end{rem}

Again, in view of Corollary \ref{cor:Gorfg}, amongst normalizations $q\colon S\lra
R$ giving a Symmetric Gorenstein Context with both rings complete,  these categories are both
independent of $q$, and we simply write $\Dsg (R)$,
$\Dcosg (R)$ in this case. 

\subsection{Morita functors and singularity categories}
As one might expect from Koszul duality, taking Morita counterparts switches the roles of the singularity and cosingularity categories. 

\begin{thm}\label{thm:main}
Suppose $R, S, \mcE$, and $\mcD$ are complete Gorenstein, $R$ and $k$
are small over $S$ and $S\stackrel{q}{\to}R$ is relatively
Gorenstein. Then $E$ and $\Eb$ induce equivalences
\begin{displaymath}
\sfD_{q\mathrm{-sg}}(R) = \frac{\DRfbbS}{\DRfbbR} \simeq \frac{\DEfbbD}{\DEfbbk}
= \sfD_{i\mathrm{-cosg}}(\mcE)
\end{displaymath}
and
\begin{displaymath}
\sfD_{q\mathrm{-cosg}}(R) = \frac{\DRfbbS}{\DRfbbk}\simeq \frac{\DEfbbD}{\DEfbbE}
= \sfD_{i\mathrm{-sg}}(\mcE).
\end{displaymath}
\end{thm}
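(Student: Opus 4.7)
The plan is to derive both equivalences as formal consequences of the Morita equivalence of bounded derived categories from Theorem \ref{thm_realmain}, by tracking how the functors $E$ and $\Eb$ act on the generators of the thick subcategories by which one must quotient to form the singularity and cosingularity categories.

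First I would verify that the hypotheses place us in a complete Symmetric Gorenstein Context. Proposition \ref{prop:halfsymmetric} yields the required properties of the sequence $S \stackrel{q}{\to} R \stackrel{p}{\to} Q$ from the assumption that $q$ is a relatively Gorenstein g-normalization. Since we additionally assume $\cE$ is Gorenstein, Proposition \ref{prop:fullsymmetric} supplies the remaining Gorenstein and g-regularity conditions on the Koszul dual side $\mcD \stackrel{i}{\to} \cE \stackrel{j}{\to} \cF$. Completeness of $R, S, \mcD, \cE$ is assumed, so Theorem \ref{thm_realmain} applies to give the equivalence $E \colon \sfD^{q\mathrm{-b}}(R) \simeq \sfD^{i\mathrm{-b}}(\cE)$ with quasi-inverse $\Eb$.

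Next I would compute $E$ and $\Eb$ on the four relevant generators. By the Gorenstein condition on $R$, $E(R) = \kdr \simeq \Sigma^{a_R} k$, and conversely $\Eb(k) = \Hom_\cE(\kdr, k) \simeq \Sigma^{-a_R}\Hom_\cE(k,k) \simeq \Sigma^{-a_R} R$, using that $R$ is complete. On the other pair, $E(k) = \Hom_R(k,k) = \cE$ directly from the definition, while $\Eb(\cE) = \Hom_\cE(\kdr, \cE) \simeq \Sigma^{-a_R} \kde$ is a shift of $k$ by the Gorenstein condition on $\cE$. Because thick subcategories are invariant under suspension, these computations show that $E$ and $\Eb$ restrict to quasi-inverse equivalences $\sfD^\mathrm{c}(R) \simeq \thick_\cE(k)$ and $\thick_R(k) \simeq \sfD^\mathrm{c}(\cE)$.

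Having matched the thick subcategories, both claimed equivalences follow from the general principle that an equivalence of triangulated categories descends to an equivalence of Verdier quotients by corresponding thick subcategories. The proof is essentially formal once Theorem \ref{thm_realmain} is in hand; the only mild subtlety I anticipate is the shift bookkeeping in the formulas for $\Eb(k)$ and $\Eb(\cE)$, but these reduce to the defining Gorenstein identities together with completeness of $R$, which ensures $\Hom_\cE(k,k) \simeq R$.
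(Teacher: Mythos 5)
Your proposal is correct and follows essentially the same route as the paper: the paper's proof likewise reduces everything to Theorem~\ref{thm_realmain} and then records the four identifications $E(R)\simeq \Sigma^{a_R}k$, $E(k)\simeq \cE$, $\Eb(k)\simeq \Sigma^{-a_R}R$, $\Eb(\cE)\simeq k$ to match the thick subcategories before passing to Verdier quotients. Your extra bookkeeping (using completeness of $R$ for $\Hom_\cE(k,k)\simeq R$ and the Gorenstein shift of $\cE$ for $\Eb(\cE)$) is exactly what justifies the computations the paper states without comment.
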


\begin{proof}
Given the equivalence of Theorem~\ref{thm_realmain} this comes down to checking the thick subcategories we wish to take quotients by are identified. We first note that since $R$ and $k$ are small over $S$, and $\mcE$ and $k$ are small over $\mcD$, both expressions make sense. It then just remains to note that
\begin{align*}
E(R) &\simeq \S^{a_R}k & E(k) &\simeq \mcE \\
\Eb(k) &\simeq \S^{-a_R}R & \Eb(\mcE) &\simeq k.
\end{align*}
\end{proof}

\section{Examples}
\label{sec:egs2}

This section gives a number of examples illustrating the main theorem in the various contexts we have kept in mind throughout. First of all, we begin with the situation that $R$ is itself
regular. In that case we can take $S=R$ and so our Symmetric
Gorenstein context is $R\lra R\lra k$ and $\cE\lla \cE\lla k$. 
Of course, in this situation
\begin{displaymath}
\Dsg(R) \simeq  0 \simeq  \Dcosg(\cE).
\end{displaymath}
However, we do obtain non-trivial equivalences
$$\sfD^\mathrm{b}(R) \simeq  \sfD^\mathrm{b}(\cE) \;\; \text{and} \;\;
\Dcosg (R)\simeq \Dsg (\cE). $$
Despite the strong assumption on $R$ there are several important examples. 

\begin{ex}{\em (Koszul duality)}
\label{eg:DsgKoszul}
Returning to Example~\ref{ex:Koszul} we could take $R = \Lambda$ a Gorenstein Koszul algebra of finite global dimension viewed as a DG-algebra with trivial differential. In this case $\cE\simeq \Lambda^!$ is also formal and we recover Koszul duality in this setting:
\begin{displaymath}
\sfD^\mathrm{b}(\Lambda) \simeq  \sfD^\mathrm{b}(\Lambda^!) \;\;
\text{and} \;\; \Dcosg (\Lambda)\simeq \Dsg (\Lambda^!).
\end{displaymath}
There are many concrete examples: for instance we could take for
$R=k[x_0, \ldots , x_n]$ a graded polynomial ring and then get for
$\cE$ an exterior algebra $\Lambda (\tau_0,\ldots , \tau_n)$, as in
the classical BGG correspondence, or we could take $R=k\langle a_1, \ldots , a_n\rangle /(a_1^2+a_2^2+\cdots
+a_n^2)$ where $k\langle a_1, \ldots , a_n\rangle$ denotes the free algebra on the $a_i$ with the standard grading, which is also Koszul of finite global dimension, and find that $\cE$ is quasi-isomorphic to the graded ring $k[x_1, \ldots, x_n]/(x_ix_j, x_i^2-x_j^2\st i\neq j)$ viewed as a DG-algebra.
\end{ex}

%\begin{ex}
%If we take $R=k[x_0, \ldots , x_n]$ we obtain $\cE=\Lambda (\tau_0,
%\ldots , \tau_n)$ (when is this formal. Sufficient to have $d_i=\deg
%(x_i) \geq 0$   and the BGG correspondence
%$$\Dsg (\Lambda (\tau_0, \ldots, \tau_n))\simeq \Dcosg (k[x_0, \ldots
%, x_n])=D^b(\mathbb{P}(d_0, \ldots, d_n). $$
%\end{ex}

%\begin{ex}
%We may take $R=k\langle a_1, \ldots , a_n\rangle /(a_1^2+a_2^2+\cdots
%+a_n^2)$ and find $\cE =k[x_1, \ldots, x_n]/(x_ix_j, x_i^2-x_j^2\st
%i\neq j)$. This shows that $R$ is g-regular; it may be better to start
%with $\cE$ which is a Koszul ring and hence find that the Koszul dual
%$R$ is formal. In any case we find
%$$\Dcosg (R)\simeq \Dsg (\cE). $$

%This applies more generally: if $\cE $ is a finite dimensional Koszul
%ring then $R$ is formal and g-regular with  
%$$\Dcosg (R)\simeq \Dsg (\cE). $$
%\end{ex}

\begin{ex}{\em (Ginzburg DGAs)}
\label{eg:DsgGinzburg}
We could fix a quiver with potential $(Q,w)$ and take for $R$ the smooth DG-algebra $\Gamma(Q,w)$, known as the Ginzburg DGA. We refer to \cite{KellerDCY} for further details and the fact that $\Gamma(Q,w)$ is bimodule Calabi-Yau and hence Gorenstein. In this case, the cosingularity category of $\Gamma(Q,w)$ is called the (generalised) cluster category $\mathcal{C}_{(Q,w)}$ associated to our quiver with potential \cite{Amiot}*{Definition~3.5}. Theorem~\ref{thm:main}, slightly generalized by replacing $k$ by a semisimple ring, gives an alternative description of the generalized cluster category:
\begin{displaymath}
\mathcal{C}_{(Q,w)} \simeq \Dcosg(\Gamma(Q,w)) \simeq \Dsg(\cE).
\end{displaymath}
\end{ex}

\begin{ex}
We may take $R$ to be a complete discrete valuation ring with residue
field $\Fp$ and function field $K$. This gives $\cE$ with $\cE_*=\Lambda_{\Fp}(\tau_{-1})$
(as shown in \cite{DGI4} this gives all such $\cE$ up to
quasi-isomorphism). We then find
$$\Dsg (\cE) \simeq \Dcosg (R)=\frac{\sfD^\mathrm{b}(R)}{\Thick(\Fp)}
\simeq  \sfD^\mathrm{b}(K), $$
where $\thick(\Fp)=\sfD^\mathrm{b}(R\finbuiltby \Fp)$ can also be described as the full subcategory consisting of objects supported just at the maximal ideal of $R$.
\end{ex}

\begin{ex} {\em (Rational spaces)}\label{ex:rat11}
We may take $R=C^*(X;\Q), k=\Q$ for any Gorenstein rational space $X$
(in this context, $R$ being Gorenstein coincides with the definition  of \cite{FHT} ).
 As in Example \ref{ex:dt}, the Eilenberg-Moore theorem gives
 $\cE\simeq C_*(\Omega X; \Q)$. By Noether normalization, $H^*(X)$ is
 finite over a polynomial subring and we may choose a map $X\lra B$
 with $B$ a product of even Eilenberg-MacLane spaces with finite fibre
 $F$, and this gives a Symmetric
 Gorenstein Context. As $C^*(B)$ is coefficient-regular we know, from Proposition~\ref{prop:cfgisf}, that
\begin{align*}
\sfD^\mathrm{f}(C^*(X)) &= \{M\in \sfD(C^*(X))\; \vert \; H^*(M) \; \text{is finitely generated over}\; H^*(X)\} \\
&=\sfD(C^*(X)\finbuiltby C^*(B))
\end{align*}
We then find
$$\Dsg (C^*(X)) =\frac{\sfD^\mathrm{f}(C^*(X))}{\sfD^\mathrm{c}(C^*(X))}\simeq 
\frac{\sfD(C_*(\Omega X)\finbuiltby C_*(\Omega F))}{\Thick (\Q)}=\Dcosg(C_*(\Omega X)).$$
%$$\sfD^b(\mathrm{Proj}(C^*(X)))=
$$\Dcosg (C^*(X)) =\frac{\sfD^\mathrm{f}(C^*(X))}{\Thick(\Q)}%{\sfD^f_{tors}(C^*(X))}
\simeq 
\frac{\sfD(C_*(\Omega X)\finbuiltby C_*(\Omega F)))}{
\sfD^\mathrm{c}(C_*(\Omega X))}=\Dsg(C_*(\Omega X)).$$
\end{ex}

\begin{ex} {\em (Representation theory)}
\label{ex:stmodkG}
We may take $R=C^*(BG)$ for $G $ a $p$-group, since we have observed
this is g-regular. We note that $\cE\simeq kG$ and $\Dsg (kG)=\sfD^\mathrm{b}
(kG)/\sfD^\mathrm{c}(kG)$ is the stable module category, so our theorem shows
$$\Dcosg (C^*(BG))\simeq \mathrm{stmod}(kG). $$

It may be worth displaying here the correpondences amongst categories
of $C^*(BG)$-modules and $kG$-modules. Here $\sfD_{tors} (C^*(BG))$ denotes the full subcategory consisting of $C^*(BG)$-modules with homotopy that is torsion with respect to the ideal $H^{>0}(BG)$. Our equivalence of bounded
derived categories is the final row, whereas the top equivalence is
proved in \cite[7.4]{kappa} and the middle equivalence follows easily.
$$\diagram 
\Loc_{C^*(BG)} (k) \ar@{=}[r] 
&\sfD_{tors} (C^*(BG)) \ar@{<->}[r]^-{\simeq} 
&\sfD (kG) \ar@{=}[r] &\Loc_{kG} (kG)\\ 
\Thick_{C^*(BG)} (k) \ar@{=}[r] &\sfD^\mathrm{b}_{tors} (C^*(BG)) \ar@{^{(}->}[u]\ar@{^{(}->}[d]
\ar@{<->}[r]^-{\simeq} &\sfD^\mathrm{c} (kG) \ar@{^{(}->}[u]\ar@{^{(}->}[d]\ar@{=}[r] &\Thick_{kG} (kG)\\
\Thick_{C^*(BG)}(C^*(BG)) \ar@{=}[r] &\sfD^\mathrm{b} (C^*(BG)) \ar@{<->}[r]^-{\simeq} &\sfD^\mathrm{b} (kG) \ar@{=}[r]  &\Thick_{kG} (k)
\enddiagram$$
To see this makes sense, note  that since $G$ is a $p$-group $k\finbuilds kG$ and
$C^*(BG)\finbuilds k$. In particular,
$\sfD^\mathrm{b}(C^*(BG))=\sfD^\mathrm{c}(C^*(BG))$. 

\end{ex}

We next consider more general finite groups; in this case $C^*(BG)$ is generally not g-regular.

\begin{ex}
We may consider $R=C^*(BG)$ even if $G$ is not a $p$-group, and this
gives a large class of examples which formed a major motivation for
our work. In this case we may use the  normalization
arising from a faithful representation $G\lra U(n)$ (cf.\ Example~\ref{ex:BGnormalization}). Since
$H^*(BU(n))$ is polynomial, we see from Lemma~\ref{lem:creg} that a $C^*(BG)$-module is finitely generated if and
only if $H^*(M)$ is finitely generated over $H^*(BG)$. As in Example~\ref{ex:rat11} we denote the full subcategory of such modules by $\sfD^\mathrm{f}(C^*(BG))$.

However the ring $\cE \simeq C_*(\Omega (BG_p^{\wedge}))$ (see Example~\ref{ex:g}) is usually not
finite dimensional. In any case the counterpart of the previous
example is
$$\Dcosg(C^*(BG))=\frac{\sfD^\mathrm{f}(C^*(BG))}{\sfD^\mathrm{f}_{tors}(C^*(BG))}\simeq
\Dsg(\cE), $$
where $\sfD^\mathrm{f}_\mathrm{tors}(C^*(BG))$ denotes the full subcategory consisting of modules with finitely generated torsion homology. The right hand side may perhaps deserve the name
$\mathrm{stmod}(\cE)$. 

Now that  $C^*(BG)$ is usually not g-regular, the equivalence 
$$\Dsg(C^*(BG))=\frac{\sfD^\mathrm{f}(C^*(BG))}{\sfD^\mathrm{c}(C^*(BG))}\simeq
\Dcosg(\cE)$$
is also of potential interest. 
\end{ex}

\begin{ex}
We could look at the very simple example of a finite cyclic group $C$
of order $n$. Embedding $C$ in the circle group $\T$ we obtain a
fibration 
$$B\T \lla BC \lla \T /C \lla \T \lla C \lla \Omega (\T/C)\lla \Omega
\T. $$
If we suppose $C$ is a $p$-group, i.e.\ a cyclic group of prime power
order, this is also a $p$-adic fibration (i.e., a fibration after
$p$-adic completion). 

Thus, taking $k$ of characteristic $p$ and $S=C^*(B\T)$
as normalization of $R=C^*(BC)$ we find $Q=C^*(\T
/C)$ and 
$$\mcF \lla \mcE \lla \mcD$$
is 
$$C_*(\T) \lla C_*(C) \lla C_*(\Omega \T /C), $$
or algebraically
$$\Lambda [\tau]\lla k[t,t^{-1}]/(t^n-1)\lla k[t,t^{-1}]. $$
We thus see the singularity and cosingularity categories are
completely algebraic:
$$\sfD_\mathrm{sg}(C^*(BC))\simeq \sfD_\mathrm{cosg} (k[t,t^{-1}]/(t^n-1))
=\frac{\sfD^\mathrm{b} (\mbox{$k[t,t^{-1}]/(t^n-1)$})}{\thick(k)}$$
and 
$$\sfD_\mathrm{cosg}(C^*(BC))\simeq\sfD_\mathrm{sg} (k[t,t^{-1}]/(t^n-1))
=\frac{\sfD^\mathrm{b} (\mbox{$k[t,t^{-1}]/(t^n-1)$})}
{\sfD^\mathrm{c} (\mbox{$k[t,t^{-1}]/(t^n-1)$})}. $$
Since $k[t,t^{-1}]/(t^n-1)$ is a finite dimensional algebra, it is
coregular, and the first of these is trivial. However it is not
regular, so the second is not. 
\end{ex}

\begin{ex}
As  a more complicated variant, we pick an odd prime $p$ and suppose $q$ is  such that 
$q|(p-1)$ ($q$ need not be prime). We may then form the semi-direct product $G=C_p\sdr C_q$
and take $k=\Fp$. In this case a generator of $C_q$ acts on
$H^1(BC_p)=\Hom(C_p, k)$ (and hence also on $H^2(BC_p)$) as multiplication by a primitive $q$th root of $1$. Thus
$$H^*(BG)=H^*(BC_p)^{C_q}=\left[ k[x_2]\tensor
  \Lambda_k(\tau_1)\right]^{C_q}=k[X_{2q}]\tensor \Lambda_k(T_{2q-1})$$
where $X=x^q$ and $T=x^{q-1}\tau$. 

If $q=2$ then $G=D_{2p}$ is a dihedral group and has a faithful
representation $\rho$ in $U(2)$. This does not map into $SU(2)$, but 
if we complete at $p$ then the map 
$$BG\lra BU(2)\stackrel{Bdet}\lra BU(1)$$
is null since $BG$ is $p$-adically $(2q-2)$-connected, and hence we
obtain a map  $BG\lra BSU(2)$. Here the second Chern class $c_2$ maps
non-trivially since $H^*(BG)$ is finite over $H^*(BU(2))$ and hence we
have a $p$-adic  fibration 
$$S^3\lra BD_{2p}\lra BSU(2). $$

More generally we start with the natural map $BC_p\lra BU(1)$ and take
homotopy $C_q$ fixed points to obtian 
$$BG=(BC_p)^{hC_q}\lra BU(1)^{hC_q}=BS^{2q-1}$$
where $S^{2q-1}$ is the $p$-adic sphere considered as an $H$-space. 
In cohomology this is 
$$k[X]\tensor \Lambda_k(T)\lla k[X]$$
so we have a $p$-adic fibration 
$$ S^{2q-1}\lra BG\lra BS^{2q-1}. $$
Taking cochains we obtain 
$$Q\lla R\lla S, $$
and notice it satisfies the hypotheses of Proposition \ref{prop:fullsymmetric} to get a
Symmetric Gorenstein Context. 

The Eilenberg-Moore theorem shows immediately that 
\begin{align*}
H^*(\mcD)&=H_*(\Omega S^{2q-1})=\Fp [Y_{2q-2}], \\
H^*(\cE)&=H_*(\Omega (BG_p^{\wedge}))=\Fp [Y_{2q-2}]\tensor \Lambda
(U_{2q-1}), \\
 H^*(\cF)&=H_*(S^{2q-1})= \Lambda (U_{2q-1}).
\end{align*}
In particular both $R$ and $\cE$ have polynomial normalizations, so 
that finitely generated modules are those whose homology is finitely
generated over the coefficients. We learn from Theorem~\ref{thm:main} that
$$\sfD_\mathrm{sg}(C^*(BG))\simeq
\sfD_\mathrm{cosg} (C_*(\Omega (BG_p^{\wedge})))$$
and 
$$\sfD_\mathrm{cosg}(C^*(BG))\simeq
\sfD_\mathrm{sg} (C_*(\Omega (BG_p^{\wedge}))). $$
We will describe the actual category elsewhere. 
\end{ex}

The above examples all have periodic cohomology. We turn to a  related rank 2
example.  

\begin{ex}
\label{eg:DsgA4}
We take the faithful
representation of $A_4$ in $SO(3)$, and note that it gives a 2-adic
fibration
$$BSO(3)\lla BA_4\lla S^3$$
(the notable thing is Poincar\'e's result that the fibre is a 2-adic
sphere; for this and more details of the calculation, see
\cite[Example 13.3]{BGS}). 
Taking cochains to get 
$$S\lra R\lra Q$$
this corresponds to a hypersurface. 

The Eilenberg-Moore spectral sequence converges, so 
$$\mcF\lla \mcE\lla \mcD$$ 
is obtained by taking chains of 
$$SO(3)\lla X \lla \Omega S^3$$
where 
$$X= \Omega ((BA_4)_2^{\wedge}). $$
We have
$$H_*(SO(3))=\Lambda (\tau_1, \tau_2), $$
$$H_*(\Omega S^3)=k[x_2]$$
and 
$$H_*(X)=\Lambda (\sigma_1)\tensor k\langle \alpha_2,
\beta_2\rangle/(\alpha^2, \beta^2) . $$
We see that the spectral sequence of the fibration collapses and so
the map 
$$H_*(\Omega S^3 )\lra H_*(X)$$
is non-trivial and by symmetry $x$ maps to $\alpha+\beta$. 

We conclude
$$\sfD_\mathrm{sg} (C^*(BA_4))\simeq \frac{\sfD (\mbox{$C_*(X)$}
  \finbuiltby_{i^*} C_*(\Omega S^3))}{\sfD^\mathrm{f}(C_*(X))}. $$
\end{ex}

\begin{ex} 
\label{eg:Dsgchrom}
\newcommand{\Ftwo}{\mathbb{F}_2}
\newcommand{\Z}{\mathbb{Z}}
\newcommand{\cA}{\mathcal{A}}
\newcommand{\cB}{\mathcal{B}}
\newcommand{\bbS}{\mathbb{S}}
\newcommand{\ko}{\mathbf{ko}}
\newcommand{\ku}{\mathbf{ku}}
\newcommand{\tmfot}{\mathbf{tmf_1(3)}}
\newcommand{\tmf}{\mathbf{tmf}}
\newcommand{\sm}{\wedge}
There is another family of examples along the lines of Example
\ref{ex:stmodkG}, which give a partial answer to  a question of
A.J.Baker  (private communication).   This involves certain important
objects of homotopy theory. We  will not attempt a full introduction,
but give references to where the reader can find further background. 
From the point of view of this paper, these are just connective commutative
ring spectra whose homology and cohomology are as described below in
terms of subalgebras of the mod 2 Steenrod algebra $\cA$.

We take  $k=\Ftwo$ and work in a 2-complete setting so that $\Z$
denotes the 2-adic integers and $R$ is  the 2-completion of one of the ring spectra
\begin{itemize}
\item $\ku$, connective complex $K$-theory, with coefficients
  $\ku_*=\Z [v]$ where $v$ is the Bott element of degree 2
\item $\ko$, connective real $K$-theory 
\item $\tmfot$  topological modular forms  with level  structure (also
  known as $BP\langle 2 \rangle$). This has coefficients $\tmfot_*=\Z
  [v_1,v_2]$ where $v_1$ is of degree 2 and $v_2$ is of degree 6. 
\item $\tmf$, topological modular forms
\end{itemize}
Beyond the coeffients of $\ku$ and $\tmfot$ we will use two significant
facts.
\begin{itemize}
\item There is a ring map $\ko\lra \ku$ and there is an equivalence of 
  $\ko$-modules $\ku \simeq \ko \sm (S^0\cup_{\eta}e^2). $ (Connective
  version of Wood's Theorem, \cite[Lemma 4.1.2]{kobg}). 
\item There is a ring map $\tmf \lra \tmfot$ and there is an equivalence of 
  $\tmf$-modules $\tmfot \simeq \tmf \sm D\cA (1). $ where $D\cA (1)$
  is a self-dual 8-cell complex with cells in dimensions $0, 2, 4, 6, 6,
  8, 10, 12$. (Hopkins-Mahowald \cite{Mathewtmf})
\end{itemize}

Using these facts we see all four rings $R$ are regular. This is obvious for $\ku$ and
$\tmfot$, from their coefficients. For $\ko$ we use regularity of
$\ku$ and Wood's Theorem. For $\tmf$ we use regularity of $\tmfot$
and the Hopkins-Mahowald theorem. 
All four spectra $R$ are bounded below and each homotopy group is
finitely generated as a $\ZZ$-module, and
hence each spectrum $R$ has a locally finite mod 2 Adams resolution.  We deduce that $R$ and $\cE$ are 
equal to their double centralizers in the sense of \cite[4.16]{DGI}.  

The interest for us comes from the fact that, $\cE =\Hom_R(\Ftwo, \Ftwo)$ has homotopy given by the
appropriate finite dimensional Hopf subalgebra  $\cB$ of the mod 2 Steenrod algebra $\cA$, namely 
\begin{multline*}
\cE (1)=\Lambda (Q_0, Q_1),\;  \cA (1)=\langle Sq^1, Sq^2\rangle,\;\\
\cE (2)=\Lambda (Q_0, Q_1, Q_2),\; 
\cA (2)=\langle Sq^1, Sq^2, Sq^4\rangle
\end{multline*}
respectively. We will explain how to deduce this from well-known calculations.

In each case the mod 2 cohomology 
$k^*(R)=\Hom_{\bbS}(R,k)_*$ is known to be a quotient algebra of
$k^*(k)=\Hom_{\bbS}(k,k)_*$, since $k^*R=\cA\tensor_{\cB}k$:
\begin{itemize}
\item $H^*(ku; \Ftwo)=\cA \tensor_{\cE (1)}\Ftwo$ \cite{AdamsChern}
\item $H^*(ko;\Ftwo)=\cA \tensor_{\cA (1)}\Ftwo$ \cite[Part III, 6.6]{AdamsBlue}
\item $H^*(BP\langle 2\rangle; \Ftwo)=\cA \tensor_{\cE (2)}\Ftwo$ \cite[Proposition 1.7]{WilsonII} 
\item $H^*(tmf; \Ftwo)=\cA \tensor_{\cA (2)}\Ftwo$ \cite{HopkinsMahowald, Mathewtmf}
\end{itemize}
We  use these calculations  to deduce that $\cE_*=\cB$. 
First note that $\Hom_R(k, k)\simeq \Hom_{k\tensor R} (k \tensor k, k)$, and 
hence there is a spectral sequence 
$$\Ext_{k_*R}^{*,*}( k_*k, k)\Rightarrow \pi_*(\Hom_R(k, k)). $$
Since $k_*R\lra k_*k$ is injective by the quoted calculations, and a 
map of commutative Hopf algebras,  $k_*k=\cA_*$ is free over $k_*R$ and the 
spectral sequence collapses to show 
$$\cE_*=\pi_*(\Hom_R(k,k))=\Hom_{k_*R}(\cA_*,k)=\cB.  $$
%Note this also shows  the map to $\Hom_k(\cA_*, k)=\cA^*$ is injective. 

The following more general structural statement helps make sense of
this. 
\begin{lem}
\label{lem:cof1}
If $R\lra k$ is proxy-regular and $R$ is bounded below then there is a cofibre sequence 
%\begin{equation}
%\label{cof1}
$$\Hom_{R}(k, k)\lra 
\Hom_{\bbS}(k, k)\lra 
\Hom_{\bbS}(R, k) 
$$
%\end{equation}
of algebras augmented over $k$. 
\end{lem}
\begin{proof}
First we note
$\Hom_{\bbS}(k,k)=\Hom_R(k, \Hom_{\bbS}(R,k))$. Next, we note $R\lra k$ is proxy 
regular and hence by \cite[6.10]{DGI} there is an equivalence
$$\Hom_R(k, \Hom_{\bbS}(R,k))\tensor_{\Hom_R(k,k)}k\simeq \mathrm{Cell}_k 
\Hom_{\bbS}(R,k)\simeq \Hom_{\bbS}(R,k),  $$
where the $k$-cellularity of $\Hom_{\bbS}(R,k)$ comes from the fact 
that it is locally finite and bounded above.
\end{proof}

%We now argue that $\cE_*\cong \cB$. 

%Accordingly, since $\cE_*=\cB$ 

\begin{rem}  We see that in our case the homotopy of the cofibre sequence in Lemma
\ref{lem:cof1} is the multiplicative short exact
sequence
$$\cB \lra \cA \lra \cA\tensor_{\cB}k. $$
It would be nice to reverse the argument we have given:
% Ideally we would argue briefly as follows
the map $\Hom_{\bbS}(k,k)\lra \Hom_{\bbS}(R,k)$ is surjective in
homotopy since $k^*R=\cA\tensor_{\cB}k$, and hence the spectral
sequence of the cofibre sequence in Lemma \ref{lem:cof1} collapses.  Hence $\Hom_R(k,k)\lra \Hom_{\bbS}(k,k)$
is injective and $\cE_*\cong \cB$ as required. However the relevant properties of the spectral
sequence are not documented.
\end{rem} 
%, so we give a more direct and explicit
%%argument shortly. 
%For readers interested in the singularity category, 

We may now observe that in this case, Theorem \ref{thm:main} states 
$$\Dcosg (R)\simeq \Dsg (\cE), $$
where $\Dsg (\cE)$ can be viewed as a lifting of
$\mathrm{stmod}(\cE_*). $\\[2ex]

\end{ex}

\begin{rem}
It seems to be an interesting problem to give criteria weaker than formality for an
equivalence $\sfD_\mathrm{sg}(A)\simeq \sfD_\mathrm{sg}(H_*(A))$. This
is probably fairly rare. For example  if $A=C^*(BG)$ for a $p$-group $G$ then
$\sfD_\mathrm{sg}(A)\simeq 0$ but the cohomology ring $H^*(BG)$ is
usually not  regular so $\sfD_\mathrm{sg}(H_*(A))\not \simeq 0$ (the
smallest examples are the dihedral and quaternion groups of order 8). 
\end{rem}

\section{Glossary}\label{sec:glossary}
This section contains a sorted list of key terminology, together with references to the appropriate definitions within the text. Entries appear, under each category, in the order they are defined in the text.

\subsection{Properties of ring spectra and maps}
When they are defined in the same place the relative version of a property is not listed separately.

\begin{itemize}

\item{\emph{g-regular:}} Section~\ref{sec:regularity}

\item{\emph{proxy-regular}:} Definition~\ref{defn:proxysmall}

\item{\emph{normalization}:} Definition~\ref{defn:g-normalization}

\item{\emph{coefficient-regular}:} Definition~\ref{defn:creg}

\item{\emph{coefficient-normalization}:} Section~\ref{ssec:crn}

\item{\emph{Gorenstein}:} Definition~\ref{defn:gor}

\item{\emph{strongly Gorenstein normalization}:} Definition~\ref{defn:sgn}

\item{\emph{complete}:} Section~\ref{ssec:completion}

\item{\emph{coregular}:} Definition~\ref{defn:coregular}

\end{itemize}

\subsection{Finiteness properties for modules}

\begin{itemize}

\item{\emph{coefficient-finitely generated}:} Definition~\ref{defn:c-fg}

\item{\emph{q-finitely generated}:} Definition~\ref{defn:q-fg}

\item{\emph{locally finitely generated/presented}:} Definition~\ref{def:lfp}

\item{\emph{cohomologically locally finitely generated/presented}:} Definition~\ref{defn:clfp}

%\item{\emph{}:}

\end{itemize}

\subsection{The fundamental setup and categories}

\begin{itemize}

\item{\emph{Symmetric Gorenstein Context}:} Definition~\ref{defn:sgc}

\item{$\sfD^{q\mathrm{-b}}(R)$:} Definition~\ref{defn:q-fg}

\item{$\sfD_{q\mathrm{-sg}}(R)$:} Definition~\ref{defn:qsg}

\item{$\sfD_{q\mathrm{-cosg}}(R)$:} Definition~\ref{defn:qcsg}

\end{itemize}

\bibliography{greg_bib}

\end{document}